\newcommand{\hyp}[5]{\,\mbox{}_{#1}F_{#2}\!\left(
 \genfrac{}{}{0pt}{}{#3}{#4};#5\right)}
\newtheorem{thm}[lemma]{Theorem}
\newtheorem{rem}[lemma]{Remark}
\newtheorem{lem}[lemma]{Lemma}
\newtheorem{prop}[lemma]{Proposition}
\newcommand{\expe}{{\mathrm e}}
\newcommand{\kk}{k} 
\newcommand{\R}{\mathbb{R}}
\newcommand{\C}{\mathbb{C}}
\newcommand{\Z}{\mathbb{Z}}
\newcommand{\N}{\mathbb{N}}
\renewcommand{\arraystretch}{1.2}
\DeclareMathOperator{\Ec}{Ec}
\DeclareMathOperator{\Es}{Es}
\DeclareMathOperator{\Fc}{Fc}
\DeclareMathOperator{\Fs}{Fs}
\DeclareMathOperator{\Gc}{Gc}
\DeclareMathOperator{\Hc}{Hc}
\DeclareMathOperator{\Gs}{Gs}
\DeclareMathOperator{\Hs}{Hs}
\renewcommand{\r}{\mathbf{r}}
\newcommand{\q}{\mathbf{q}}
\newcommand{\0}{\mathbf{0}}
\DeclareMathOperator{\sn}{sn}
\DeclareMathOperator{\cn}{cn}
\DeclareMathOperator{\dn}{dn}
\DeclareMathOperator{\ssc}{sc}
\DeclareMathOperator{\nd}{nd}
\DeclareMathOperator{\cs}{cs}
\DeclareMathOperator{\cd}{cd}
\DeclareMathOperator{\ns}{ns}
\DeclareMathOperator{\dc}{dc}
\DeclareMathOperator{\arcsn}{arcsn}
\DeclareMathOperator{\arcsc}{arcsc}
\DeclareMathOperator{\sech}{sech}
\DeclareMathOperator{\ds}{ds}
\DeclareMathOperator{\F}{{\rm F}}
\numberwithin{equation}{section}
\numberwithin{corollary}{section}
\numberwithin{remark}{section}
\numberwithin{theorem}{section}
\numberwithin{lemma}{section}
\begin{document}

\allowdisplaybreaks

\newcommand{\arXivNumber}{2202.08918}

\renewcommand{\PaperNumber}{041}

\FirstPageHeading

\ArticleName{Expansion for a Fundamental Solution of Laplace's\\ Equation in Flat-Ring Cyclide Coordinates}

\ShortArticleName{Expansion for a Fundamental Solution of Laplace's Equation}

\Author{Lijuan BI~$^{\rm a}$, Howard S.~COHL~$^{\rm b}$ and Hans VOLKMER~$^{\rm c}$}

\AuthorNameForHeading{L.~Bi, H.S.~Cohl, H.~Volkmer}

\Address{$^{\rm a)}$~Department of Mathematics,
The Ohio State University at Newark,
Newark, OH 43055, USA}
\EmailD{\href{mailto:bi.146@osu.edu}{bi.146@osu.edu}}
\URLaddressD{\url{https://newark.osu.edu/directory/bi-lijuan.html}}

\Address{$^{\rm b)}$~Applied and Computational
Mathematics Division, National Institute of Standards\\
\hphantom{$^{\rm b)}$}~and Technology, Mission Viejo, CA 92694, USA}
\EmailD{\href{mailto:howard.cohl@nist.gov}{howard.cohl@nist.gov}}
\URLaddressD{\url{http://www.nist.gov/itl/math/msg/howard-s-cohl.cfm}}

\Address{$^{\rm c)}$~Department of Mathematical Sciences, University of Wisconsin-Milwaukee,\\
\hphantom{$^{\rm c)}$}~Milwaukee, WI 53201-0413, USA}
\EmailD{\href{mailto:volkmer@uwm.edu}{volkmer@uwm.edu}}

\ArticleDates{Received November 20, 2021, in final form May 18, 2022; Published online June 03, 2022}

\Abstract{We derive an expansion for the fundamental solution of Laplace's equation in flat-ring coordinates in three-dimensional Euclidean space. This expansion is a double series of products of functions that are harmonic in the interior and exterior of ``flat rings''. These internal and external flat-ring harmonic functions are expressed in terms of simply-periodic Lam\'e functions. In a limiting case we obtain the expansion of the fundamental solution in toroidal coordinates.}

\Keywords{Laplace's equation; fundamental solution; separable curvilinear coordinate system; flat-ring cyclide coordinates; special functions; orthogonal polynomials}

\Classification{35A08; 35J05; 33C05; 33C10; 33C15; 33C20; 33C45; 33C47; 33C55; 33C75}

\section{Introduction}
In 1875 Albert Wangerin \cite{Wangerin1875} introduced a coordinate system $s$, $t$ in the $xy$-plane by setting
$x+{\rm i}y=\dn(s+{\rm i}t,k)$, where $\dn$
\cite[formula~(22.2.6)]{NIST:DLMF} is one of the {Jacobian} elliptic functions. By rotating the system about the $y$-axis
he obtained coordinates $s$, $t$, $\phi$ in $\R^3$ that are called flat-ring
coordinates by Moon and Spencer \cite[p.~126]{MoonSpencer}. Actually, Moon and Spencer use $x+{\rm i}y=\sn(s+{\rm i}t,k)$ \cite[formula~(22.2.4)]{NIST:DLMF} but this leads to the same coordinate system.
Wangerin showed that the Laplace equation
\begin{equation}\label{1:Laplace}
\Delta u=\frac{\partial^2 u}{\partial x^2}+\frac{\partial^2 u}{\partial y^2}+\frac{\partial^2 u}{\partial z^2}=0
\end{equation}
can be solved by functions of the form
\begin{equation}\label{1:R}
u(x,y,z)={\mathcal R}(x,y,z) u_1(s) u_2(t) u_3(\phi),
\end{equation}
where ${\mathcal R}$ is a known elementary function called
the modulation factor (following Morse and~Fesh\-bach), and $u_1$, $u_2$, $u_3$ solve
ordinary differential equations.
In our case
${\mathcal R}=R^{-1/2}$, where~$R$ is the distance to the axis of rotation.
This is referred to as ${\mathcal R}$-separation of variables
and we express this by saying that
the Laplace equation is ${\mathcal R}$-separable in flat-ring coordinates.
Wangerin won a prize for his original work but only for the first part of his paper. In the second part
Wangerin tried to expand the fundamental solution of the Laplace equation (given as the inverse distance to a fixed point),
in terms of the separated solutions \eqref{1:R}. Such an expansion is needed in physical applications.
However, there were some obvious mistakes in this part of the paper. It~is the purpose of the present paper to
derive the desired expansion of the fundamental solution in flat-ring coordinates. It is a little bit surprising, but, as far as we know, this work has not been carried out in all those years since 1875.

Flat-ring coordinates are a special instance of so-called cyclidic coordinate systems. In these systems the coordinate surfaces
are of order 4 (they are zero sets of polynomials of degree $4$ in the Cartesian coordinates $x$, $y$, $z$). This is in contrast to quadric coordinate systems in which the coordinate surfaces have order 2 (like spherical coordinates).
B\^{o}cher \cite{Bocher} listed quadric and cyclidic coordinate systems in which Laplace's equation can be solved using separation of variables in his book ``\"{U}ber die Reihenentwicklungen der Potentialtheorie" published in 1894.
We~can also find these quadric and cyclidic coordinates systems in Miller's book \cite[Tables 14 on~p.~164 and~17 on~p.~210]{Miller}. The quadric coordinates are Cartesian, cylindrical, parabolic cylindrical, elliptic cylindrical, spherical,
prolate spheroidal, oblate spheroidal, parabolic, paraboloidal, sphero-conal, and ellipsoidal coordinates. The cyclidic coordinates are flat-ring cyclide, flat-disk cyclide, bi-cyclide, cap-cyclide, and 5-cyclide coordinates.
In the quadric systems we can choose $R=1$ in~\eqref{1:R} but this is not possible in the cyclidic systems.
In several of these coordinate systems the expansion of the fundamental solution is known; see \cite{Blimkeetal2008,CohlVolkcyclide2,Heine,MorseFeshbach}.

Let us consider {perhaps the simplest rotationally-invariant cyclidic coordinate system which ${\mathcal R}$-separates Laplace's equation in three-dimensions}, the toroidal coordinate system.
As we show in Section~\ref{toroidal} flat-ring coordinates reduce to toroidal coordinates in a limiting case.
Toroidal coordinates \cite[Section~8.10]{Lebedev} are given by
\begin{gather}\label{1:toroidal}
x=\frac{\sinh \tau \cos \phi}{\cosh \tau-\cos \psi},\qquad
y=\frac{\sinh \tau \sin \phi}{\cosh \tau-\cos \psi},\qquad
z=\frac{\sin \psi}{\cosh \tau-\cos \psi},
\end{gather}
where $\tau>0$, $-\pi<\phi,\psi\le \pi$. The coordinate surfaces $\tau={\rm constant}$ are tori, i.e.,
\[
\big(1+x^2+y^2+z^2\big)^2=4\big(x^2+y^2\big)\coth^2\tau.
\]
The coordinate surfaces $\psi={\rm constant}$ are spherical bowls, i.e.,
\[
(z-\cot \psi)^2+x^2+y^2=\frac{1}{\sin^2 \psi}.
\]
The coordinate surfaces $\phi={\rm constant}$ are half-planes, i.e.,
\[
x \sin \phi=y \cos \phi.
\]
Laplace's equation \eqref{1:Laplace} has solutions of the form
\[
u(x, y, z)=(\cosh \tau-\cos \psi)^{1/2} u_1(\tau)u_2(\psi)u_3(\phi),
\]
where $u_1$, $u_2$, $u_3$ satisfy the following ordinary differential equations, respectively:
\begin{gather}\label{1:ode1}
\frac{1}{\sinh \tau} \frac{\mathrm d}{{\mathrm d}\tau}\bigg(\sinh \tau \frac{{\mathrm d} u_1}{{\mathrm d}\tau}\bigg)-\bigg(n^2-\frac{1}{4}+\frac{m^2}{\sinh^2\tau}\bigg)u_1=0,
\\
\label{1:ode2}
\frac{{\mathrm d}^2u_2}{{\mathrm d}\psi^2}+n^2u_2=0,
\\
\label{1:ode3}
\frac{{\mathrm d}^2 u_3}{{\mathrm d}\phi^2}+m^2u_3=0,
\end{gather}
where $m$, $n$ are separation parameters. The general solution of \eqref{1:ode1} is
\[
u_1(\tau)=c_1P_{n-\frac{1}{2}}^m(\cosh\tau)+c_2Q_{n-\frac{1}{2}}^m(\cosh\tau),
\]
where $c_1$ and $c_2$ are constants, and $P_\nu^\mu\colon \C\setminus(-\infty,1]\to\C$ and $Q_\nu^\mu\colon \C\setminus(-\infty,1]{\to\C}$ are the associated Legendre functions of the first and second kind respectively
defined in terms of the Gauss hypergeometric function by \cite[functions~(14.3.6) and~(14.3.7)]{NIST:DLMF}\vspace{-1ex}
\begin{gather}
P_\nu^\mu(z):=\frac{1}{\Gamma(1-\mu)}
\bigg(\frac{z+1}{z-1}\bigg)^{\frac12\mu}\hyp21{-\nu,\nu+1}{1-\mu}{\frac{1-z}{2}}\!,
\label{Pdefn}
\end{gather}
and\vspace{-1ex}
\begin{gather}
Q_\nu^\mu(z):=
\frac{\expe^{{\rm i}\mu\pi}\sqrt{\pi} \, \Gamma(\nu+\mu+1)\big(z^2-1\big)^{\frac12\mu}}{2^{\nu+1} \Gamma\big(\nu+\frac32\big)z^{\nu+\mu+1}}
\hyp21{\frac{\nu+\mu+1}{2},\frac{\nu+\mu+2}{2}}{\nu+\frac32}{\frac{1}{z^2}}\!,
\end{gather}
where $\nu+\mu\not\in-\N$.

The internal toroidal harmonics \cite[equation~(8.11.8)]{Lebedev}
\[
G_{m,n}(x,y,z){:=}(\cosh \tau-\cos \psi)^{1/2} Q_{n-\frac{1}{2}}^m(\cosh\tau)\expe^{{\rm i}n\psi} \expe^{{\rm i}m\phi}\qquad\text{for}\quad m, n\in \Z
\]
are harmonic in $\R^3$ except for the $z$-axis. The external toroidal harmonics
\cite[equation~(8.11.9)]{Lebedev}
\[
H_{m,n}(x,y,z){:=}(\cosh \tau-\cos \psi)^{1/2}\, P_{n-\frac{1}{2}}^m(\cosh\tau)\expe^{{\rm i}n\psi} \expe^{{\rm i}m\phi}\qquad\text{for}\quad m, n\in \Z
\]
are harmonic in $\R^3$ except for the unit circle $z=0$, $x^2+y^2=1$.
The desired expansion of the fundamental solution of Laplace's equation in internal and external toroidal harmonics is
\begin{equation}\label{1:expansion}
\frac{1}{\|\r-\r^\ast\|}=\frac1\pi\sum_{m,n\in\Z} (-1)^m \frac{\Gamma\big(n-m+\frac12\big)}{\Gamma\big(n+m+\frac12\big)} G_{m,n}(\r)\overline{H_{m,n}(\r^\ast)}
\end{equation}
provided $\tau>\tau^\ast$, where $\tau$, $\tau^\ast$ are toroidal coordinates of~$\r=(x,y,z)$ and $\r^\ast=(x^\ast,y^\ast,z^\ast)$, respectively.
Formula \eqref{1:expansion} follows by inserting \cite[Theorem~8.795.2]{Grad}
{
\[
Q_\nu(\cosh\alpha)=\sum_{k=0}^\infty \epsilon_k(-1)^kP_\nu^{-k}(\cosh\tau)Q_\nu^k(\cosh\tau^\ast)\cos(k(\psi-\psi^\ast)),
\]
where $\epsilon_k:=2-\delta_{k,0}$ is the Neumann factor (commonly occurring in Fourier cosine series), $\cosh\alpha:=\cosh\tau\cosh\tau^\ast-\sinh\tau\sinh\tau^\ast\cos(\psi-\psi^\ast)$,
with $\nu=n-\frac12$,} into the single summation
expression for the reciprocal distance between two points in toroidal coordinates given by Bateman
\cite[Section~10.3, equation~(26)]{Bateman}
\begin{gather*}
\frac{1}{\|\r-\r^\ast\|}=
\frac{\sqrt{\cosh\tau-\cos\psi}\sqrt{\cosh\tau^\ast-\cos\psi^\ast}} {\sqrt{2}\,a\sqrt{\cosh\alpha-\cos(\psi-\psi^\ast)}}
\\ \hphantom{\frac{1}{\|\r-\r^\ast\|}}
{}=\frac{\sqrt{\cosh\tau-\cos\psi}\sqrt{\cosh\tau^\ast-\cos\psi^\ast}}{a\pi}
\sum_{n=-\infty}^\infty
Q_{n-\frac12}(\cosh\alpha)\expe^{{\rm i}n(\psi-\psi^\ast)},
\end{gather*}
which is a direct consequence of Heine's reciprocal square root identity\vspace{-1ex}
\cite[formula~(A5)]{CT}
\begin{equation*}
\frac{1}{\sqrt{z-x}}=
\frac{\sqrt{2}}{\pi}
\sum_{n=-\infty}^\infty\expe^{{\rm i}n\theta}Q_{n-\frac12}(z),
\end{equation*}
where $x=\cos\theta$ and $z>1$.

In this work we carry out the corresponding analysis in flat-ring coordinates. ``Flat-rings'' now play the role of
tori.
In Section~\ref{sec2} we consider flat-ring coordinates in the form used by Poole
\cite{Poole2, Poole1}. This form is the most convenient for our purposes.
In Section~\ref{sec3} we solve the Laplace equation by ${\mathcal R}$-separation of variables. The differential equation for $u_1$ and $u_2$ is the Lam\'e equation. In Section~\ref{sec4} we collect properties of simply-periodic Lam\'e functions that we need to introduce
flat-ring harmonics in Section~\ref{sec5}. In Section~\ref{sec5} we prove the main results of this paper: we solve the Dirichlet problem
on flat-rings (Theorem \ref{5:Dirichlet}), we find an integral representation of external flat-ring harmonics in terms of internal
flat-ring harmonics (Theorem \ref{5:intrep}), and we find the series expansion of the fundamental solution in terms of internal and external flat-ring harmonics (Theorem \ref{5:main}).
In Sections~\ref{sec6} and~\ref{sec7} we connect flat-ring with toroidal coordinates and
we compare the corresponding expansions of the fundamental solution.

\section{Flat-ring coordinates}\label{sec2}
\subsection{Flat-ring coordinates in algebraic from}
Flat-ring coordinates $\mu$, $\rho$, $\phi$ form an orthogonal coordinate system in $\R^3$ with rotational symmetry.
We define these coordinates by
\begin{equation}\label{2:coord1}
x=\frac{\cos\phi}{T},\qquad y=\frac{\sin\phi}{T},\qquad z=\frac{1}{T}\bigg(\frac{-\mu\rho}{a}\bigg)^{1/2},
\end{equation}
where
\begin{equation}\label{2:T}
 T=\bigg(\frac{(a-\mu)(a-\rho{)}}{a(a-1)}\bigg)^{1/2} + \bigg(\frac{(1-\mu)(1-\rho)}{a-1}\bigg)^{1/2}.
\end{equation}
The parameter $a>1$ is given. The coordinates $\mu$, $\rho$ vary
according to
\[
-\infty<\mu<0<\rho<1 .
\]
The roots appearing in \eqref{2:coord1}, \eqref{2:T} are understood as positive roots.
Miller \cite[system (15), p.~210]{Miller} considers flat-ring coordinates $\mu'$, $\rho'$, $\phi$ depending on a parameter $a'$ in a slightly different form. The connection is given by
\[
a'=\frac{a}{a-1},\qquad \mu=a-(a-1)\mu',\qquad \rho=a-(a-1)\rho',
\]
where
\[
1<\rho'< \frac{a}{a-1} <\mu'<\infty .
\]

For $\phi=0$ we obtain a planar coordinate system
\begin{equation}\label{2:planar}
 x=\frac{1}{T},\qquad z=\frac{1}{T}\bigg(\frac{-\mu\rho}{a}\bigg)^{1/2}.
\end{equation}
This defines a function $g$ by setting $(x,z)=g(\mu,\rho)$. We introduce the sets
\[
A=(-\infty,0)\times(0,1),\qquad Q=\big\{(x,z)\colon x,z>0,\, x^2+z^2<1\big\}.
\]

\begin{prop}\label{2:prop}
$g$ maps $A$ bijectively onto $Q$. If $(x,z)=g(\mu,\rho)$ then
\begin{equation}\label{2:coordlines}
\frac{\big(x^2+z^2+1\big)^2}{\tau-a}-\frac{\big(x^2+z^2-1\big)^2}{\tau-1}-\frac{4z^2}{\tau}=0\qquad\text{for}\quad \tau=\mu, \rho,
\end{equation}
so the inverse map $(\mu,\rho)=g^{-1}(x,z)$ is given by
\begin{equation}\label{2:inverse}
 \mu=\frac{1}{8x^2}\big({-}\omega-\big(\omega^2+64a x^2z^2\big)^{1/2}\big),\qquad \rho=\frac{1}{8x^2}\big({-}\omega+(\omega^2+64ax^2z^2\big)^{1/2}\big),
 \end{equation}
 where
\[
\omega=4(1+a)z^2+a\big(x^2+z^2-1\big)^2-\big(x^2+z^2+1\big)^2 .
\]
The map $g$ and its inverse map are real-analytic.
\end{prop}
\begin{proof}
Let $(\mu,\rho)\in A$ and $(x,z)=g(\mu,\rho)$. Then
\[
 \frac1T\bigg(1-\frac{\mu\rho}{a}\bigg)= \bigg(\frac{(a-\mu)(a-\rho)}{a(a-1)}\bigg)^{1/2} - \bigg(\frac{(1-\mu)(1-\rho)}{a-1}\bigg)^{1/2},
\]
which leads to
\begin{equation}\label{2:eq1} \frac{x^2+z^2-1}{2x}=\frac12\bigg(\frac1T-\frac1T\frac{\mu\rho}{a}-T\bigg)=
-\bigg(\frac{(1-\mu)(1-\rho)}{a-1}\bigg)^{1/2}.
\end{equation}
By a similar calculation we obtain
\begin{equation}\label{2:eq2}
 \frac{x^2+z^2+1}{2x}=\bigg(\frac{(a-\mu)(a-\rho)}{a(a-1)}\bigg)^{1/2}.
\end{equation}
By \eqref{2:eq1} and~\eqref{2:eq2},
\[
 \frac{\big(x^2+z^2+1\big)^2}{\mu-a}-\frac{\big(x^2+z^2-1\big)^2}{\mu-1}-\frac{4z^2}{\mu} =4x^2\bigg( \frac{\rho-a}{a(a-1)}-\frac{\rho-1}{a-1}+\frac{\rho}{a}\bigg)=0 .
\]
With the same calculation we show that \eqref{2:coordlines} also holds for $\tau=\rho$.

If $(x,z)=g(\mu,\rho)$ with $(\mu,\rho)\in A$ then it is clear that $x,z>0$, and \eqref{2:eq1} shows that $(x,z)\in Q$.
Conversely, suppose that $(x,z)\in Q$. We consider the quadratic polynomial
\begin{equation}\label{2:F}
 F(\tau)=\tau(\tau-1)(\tau-a)\bigg( \frac{\big(x^2+z^2+1\big)^2}{\tau-a}-\frac{\big(x^2+z^2-1\big)^2}{\tau-1}-\frac{4z^2}{\tau}\bigg) .
\end{equation}
The factor of $\tau^2$ is
\[
\big(x^2+z^2+1\big)^2-\big(x^2+z^2-1\big)^2-4z^2= 4x^2>0 .
\]
Moreover,
\begin{gather*}
F(0)= -4a z^2<0,
\\
F(1)= (a-1)\big(x^2+z^2-1\big)^2>0.
\end{gather*}
It follows that there is a unique $(\mu,\rho)\in A$ such that
\begin{equation}\label{2:eq3}
 F(\tau)=4x^2(\tau-\mu)(\tau-\rho)\qquad\text{for}\quad \tau\in\R.
\end{equation}
By setting $\tau=0,1,a$ in \eqref{2:eq3} it follows that
\begin{gather*}
4x^2 \mu\rho= -4a z^2,
\\
4x^2 (1-\mu)(1-\rho)= (a-1)\big(x^2+z^2-1\big)^2,
\\
4x^2 (a-\mu)(a-\rho)= a(a-1)\big(x^2+z^2+1\big)^2.
\end{gather*}
Then we obtain
\begin{gather*}
 T=\frac{x^2+z^2+1}{2x}-\frac{x^2+z^2-1}{2x}=\frac1x\qquad
\text{and}\qquad
z^2=x^2\frac{-\mu\rho}{a}=\frac{1}{T^2}\frac{-\mu\rho}{a}.
\end{gather*}
It follows that $g(\mu,\rho)=(x,z)$ and so we have shown that $g$ is surjective.

Suppose that $g(\mu,\rho)=g(\tilde \mu,\tilde \rho)=(x,z)$. By \eqref{2:coordlines}, $\mu$, $\rho$, $\tilde \mu$, $\tilde \rho$ are all roots of $F(\mu)=0$ with~$F$ defined in \eqref{2:F}.
Since $(\mu,\rho), (\tilde \mu,\tilde \rho)\in A$, this implies $(\mu,\rho)=(\tilde \mu,\tilde \rho)$. Thus $g$ is injective.
It is clear that $g$ and $g^{-1}$ are analytic maps.
\end{proof}

Some coordinate lines are shown in Figure \ref{figure1}. Note that the boundary of the region $Q$ is given by the quarter circle $\rho=1$, the vertical segment $\mu=-\infty$, and two horizontal segments $[0,b]$, $[b,1]$ represented by $\rho=0$ and $\mu=0$ respectively, where
\[
b=\frac{\sqrt{a}-1}{\sqrt{a-1}}=\frac{\sqrt{a-1}}{\sqrt{a}+1}.
\]

\begin{figure}[ht]
\begin{center}
\includegraphics[scale=0.4]{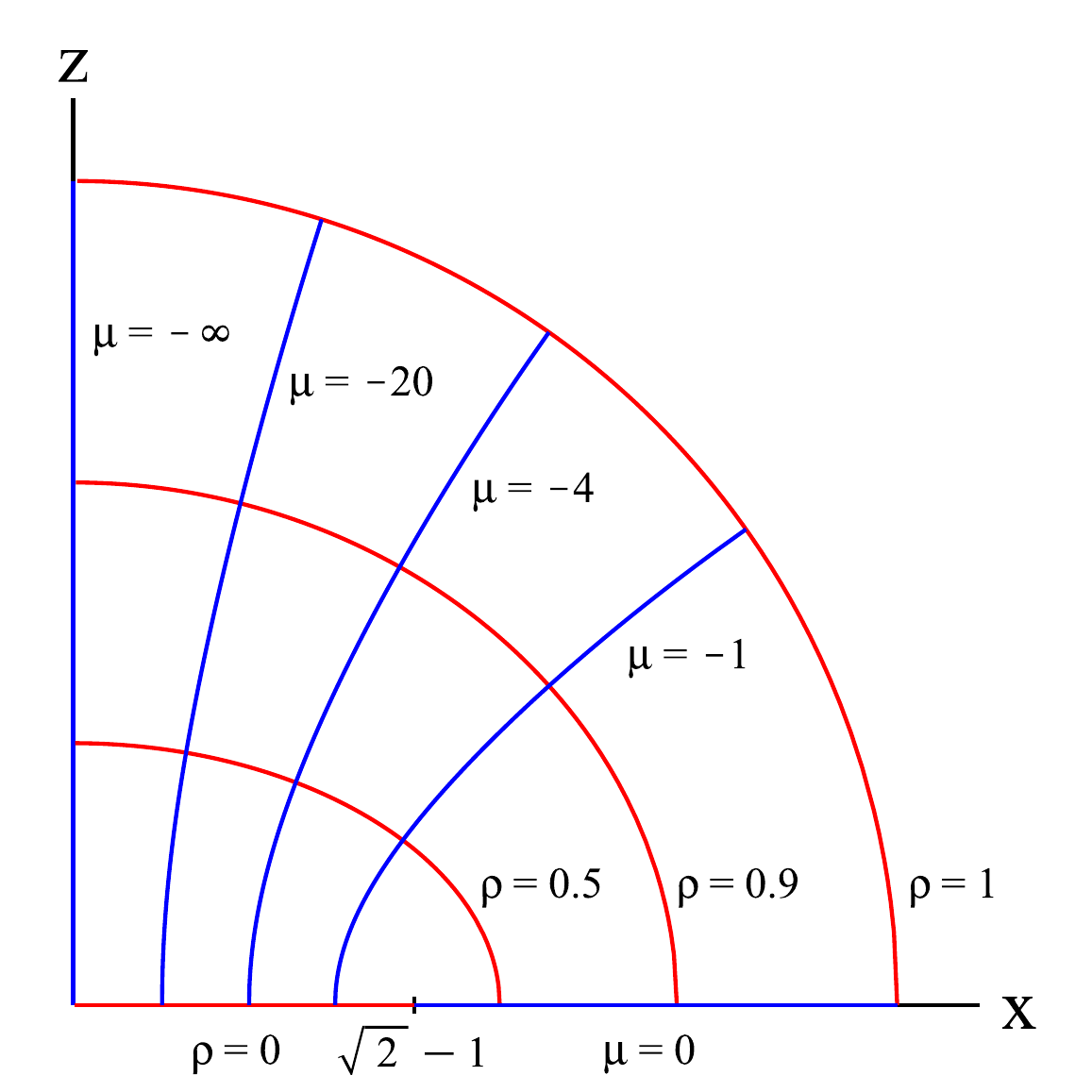}
\caption{Coordinate lines of system \eqref{2:planar} for $a=2$.
\label{figure1}}
\end{center}
\end{figure}

If we change the signs of the square roots in \eqref{2:coord1} and \eqref{2:T} we obtain coordinates for other regions in $\mathbb{R}^2$ according to Table~\ref{t1}.
\begin{table}[ht]
\begin{center}
\caption{Signs of roots in regions of $(x,z)$-plane.}
\label{t1}
\vspace{2mm}
{\small\begin{tabular}{c|c|c|c}
\hline
 &\rule{0pt}{19pt} $\left(\dfrac{(a-\mu)(a-\rho{)}}{a(a-1)}\right)^{1/2}$ &$\left(\dfrac{(1-\mu)(1-\rho)}{a-1}\right)^{1/2} $ &$\left(\dfrac{-\mu\rho}{a}\right)^{1/2}_{\vphantom{q_q}}$
 \\
 \hline
 $x>0,\ z>0, \ x^2+z^2<1$ & $+$& $+$&$+$\\
 $x>0, \ z>0, \ x^2+z^2>1$ & $+$&$-$&$+$\\
 $x>0,\ z<0, \ x^2+z^2>1$ & $+$& $-$& $-$\\
 $x>0, \ z<0, \ x^2+z^2<1$ & $+$& $+$&$-$\\
 \hline
\end{tabular}}
\end{center}
\end{table}

\subsection{Flat-ring coordinates in transcendental form}
We write $a=k^{-2}$ with $k\in(0,1)$, and set
\begin{equation}\label{2:subs}
\rho=\sn^2(s,k),\qquad \mu=\sn^2({\rm i} t,k),
\end{equation}
where $\sn(u,k)$ denotes the {Jacobian} elliptic function of modulus $k$ (see \cite[Chapter~XXII]{WhitWatson}, \cite[Chapter~22]{NIST:DLMF}). We will also use the {Jacobian} elliptic functions $\cn(z,k)$, $\dn(z,k)$,
the complementary modulus $k'=\sqrt{1-k^2}$, and the complete elliptic integrals of the first kind
\[
K=\int_0^{\pi/2} \frac{{\rm d}\theta}{\sqrt{1-k^2\sin^2\theta}},\qquad
K'=\int_0^{\pi/2} \frac{{\rm d}\theta}{\sqrt{1-k'^2\sin^2\theta}}.
\]
By \eqref{2:subs}, $s\in(0,K)$ is mapped bijectively onto $\rho\in(0,1)$, and
$t\in (0,K')$ is mapped bijectively onto $\mu\in(-\infty,0)$.
Substituting \eqref{2:subs} into \eqref{2:coord1} and \eqref{2:T}, we obtain
\begin{equation}\label{2:transc}
x=\frac{\cos\phi}{T} ,\qquad
y=\frac{\sin\phi}{T},\qquad
z=\frac{-{\rm i} k\, \sn(s,k)\sn({\rm i}t,k)}{T},
\end{equation}
where
\[
T=\frac{1}{k'} \dn(s,k)\dn({\rm i}t,k)+\frac{k}{k'} \cn(s,k)\cn({\rm i}t,k) .
\]
By Proposition \ref{2:prop}, the coordinate surface $s=s_0$ is given by
\begin{equation*}
 \frac{k^2\big(x^2+y^2+z^2+1\big)^2}{\dn^2(s_0,k)} - \frac{\big(x^2+y^2+z^2-1\big)^2}{\cn^2(s_0,k)}+ \frac{4z^2}{\sn^2(s_0,k)} =0,
\end{equation*}
and the coordinate surface $t=t_0$ is given by
\begin{equation}\label{2:coordsurface2}
 \frac{k^2\big(x^2+y^2+z^2+1\big)^2}{\dn^2({\rm i} t_0,k)} - \frac{\big(x^2+y^2+z^2-1\big)^2}{\cn^2({\rm i} t_0,k)}+ \frac{4z^2}{\sn^2({\rm i} t_0,k)} =0.
\end{equation}

When $\phi=0$ we have the planar coordinate system
\begin{equation}\label{2:planar2}
x=\frac1T ,\qquad
z=\frac{-{\rm i} k \sn(s,k)\sn({\rm i}t,k)}{T}.
\end{equation}
We note that the coordinates $s$, $t$ can also be written in complex form \cite[equation (8)]{Poole1} as
\begin{gather*}
x+{\rm i} z=f(s+{\rm i}t),\qquad
f(w)=\frac{1}{k'}(\dn(w,k)-k\,\cn(w,k)) .
\end{gather*}
The corresponding metric coefficients
\begin{gather*}
h_s= \bigg(\bigg(\frac{\partial x}{\partial s}\bigg)^2
+\bigg(\frac{\partial z}{\partial s}\bigg)^2\bigg)^{1/2},\qquad
h_t= \bigg(\bigg(\frac{\partial x}{\partial t}\bigg)^2
+\bigg(\frac{\partial z}{\partial t}\bigg)^2\bigg)^{1/2},
\end{gather*}
are \cite[equation (9)]{Poole1}
\begin{equation}\label{2:metric}
 h_s=h_t=\frac{k}{T}\big(\sn^2(s,k)-\sn^2({\rm i}t,k)\big)^{1/2}.
\end{equation}

If we take $s\in(0,2K)$, $t\in(0,K')$ then we obtain a coordinate system for the first quadrant $x,z>0$.
If the point $(x,z)$ is given by coordinates $s$, $t$ then the point{\samepage
\begin{gather*}
(\tilde x,\tilde z)=\bigg(\frac{x}{x^2+z^2},\frac{z}{x^2+z^2}\bigg),
\end{gather*}
obtained from $(x,z)$ by inversion at the unit circle, has coordinates $\tilde s=2K-s$, $\tilde t=t$.}

There are three different ways to extend the coordinates \eqref{2:planar2} to the half-plane $x>0$.

1) We take $s\in (-2K,2K)$ and $t\in(0,K')$. The reflection $(x,z)\mapsto (x,-z)$ is given by $s\mapsto -s$.
Then
\eqref{2:planar2} establishes a bijective map between $(s,t)$ and the region,
\begin{equation}\label{2:Q1}
 Q_1=\{(x,z)\colon x>0\}\setminus \{(x,0)\colon x\ge b \},\qquad b=\frac{1-k}{k'},
\end{equation}
which together with its inverse is real-analytic. $Q_1$ is the right-hand half-plane with a cut from $x=b$ to $x=+\infty$ along the $x$-axis.
In order to express $s$, $t$ in terms of $x$, $z$
form $\mu$, $\rho$ according to~\eqref{2:inverse}
and set
\begin{gather*}
s_1=\arcsn\big(\sqrt\rho,k\big),\qquad t_1=\arcsc\big(\sqrt{-\mu},k'\big),
\end{gather*}
using the inverse Jacobian elliptic functions from \cite[Section~22.15]{NIST:DLMF}.
Then the flat-ring coordinates $s$, $t$ of $x$, $z$ are given by $t=t_1$ and
\begin{gather*}
s=\begin{cases} s_1 & \text{if}\quad R^2+z^2\le 1\quad\text{and}\quad z\ge 0,\\
2K-s_1 & \text{if}\quad R^2+z^2>1\quad\text{and}\quad z\ge 0,\\
-s_1 & \text{if}\quad R^2+z^2\le 1\quad\text{and}\quad z<0,\\
s_1-2K & \text{if}\quad R^2+z^2>1\quad\text{and}\quad z< 0.
\end{cases}
\end{gather*}
Some coordinate lines of this coordinate system are depicted in Figure~\ref{2:fig2}.
\begin{figure}[ht]
\begin{center}
\includegraphics[clip=true,trim={0 0 0 0},width=8cm]{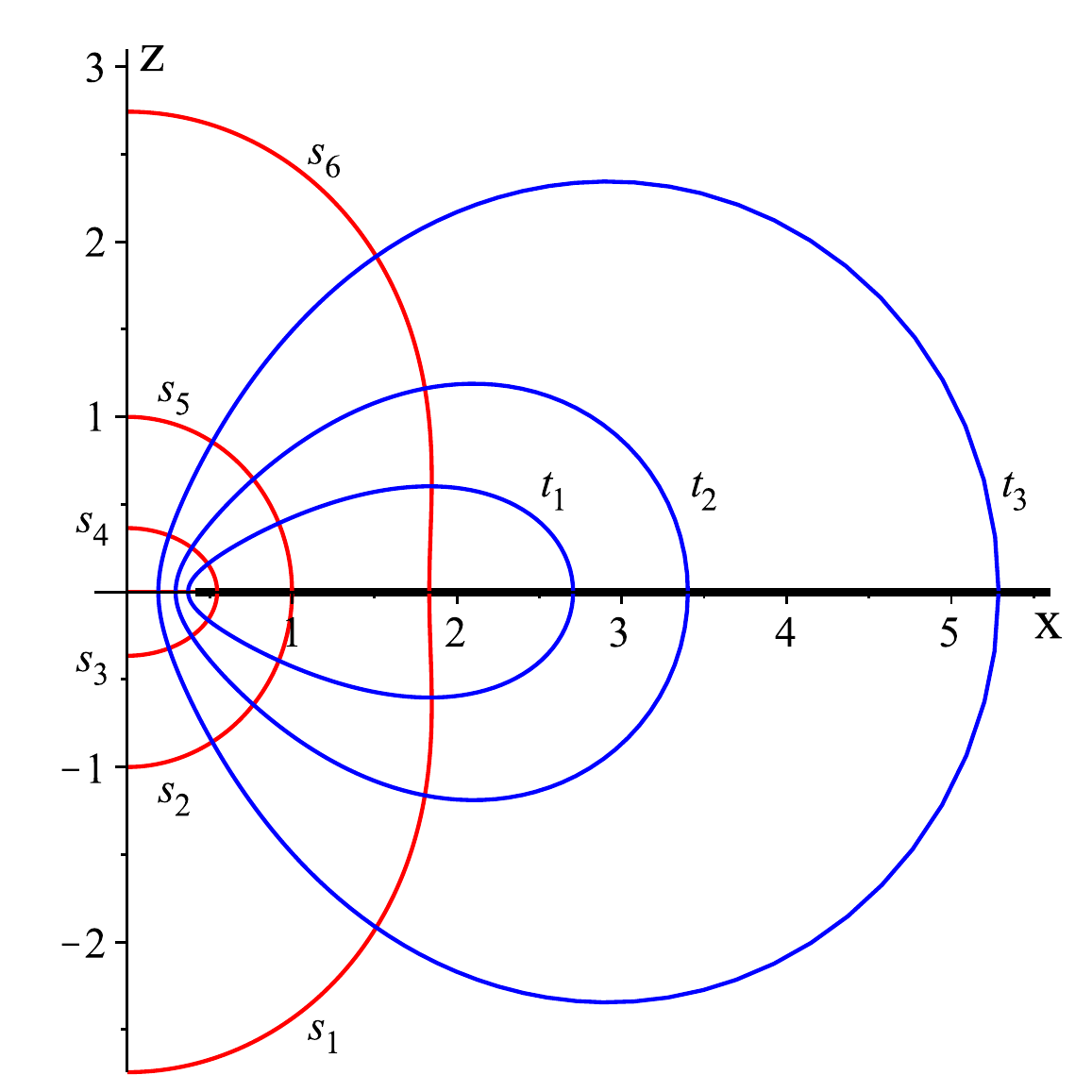}
\caption{Coordinate lines $s=s_i$, $s_1=-1.5K$, $s_2=-K$, $s_3=-0.5K$, $s_4=0.5K$, $s_5=K$, $s_6=1.5K$ and $t=t_i$, $t_1=0.3K'$, $t_2=0.5K'$, $t_3=0.7K'$ of system \eqref{2:planar2} for $a=2$.\label{2:fig2}}
\end{center}
\end{figure}

A second variant is as follows.

2)
We take $s\in(0,2K)$ and $t\in(-K',K')$.
The reflection $(x,z)\mapsto (x,-z)$ is given by $t\mapsto -t$.
Then \eqref{2:planar2} establishes a bijective map between $(s,t)$ and the region,
\begin{equation*}
 Q_2=\{(x,z)\colon x>0\}\setminus \big\{(x,0)\colon 0<x\le b \text{ or } x\ge b^{-1} \big\}.
\end{equation*}
$Q_2$ is the right-hand half-plane with cuts along the $x$-axis from $x=0$ to $x=b$ and from $b^{-1}$ to~$\infty$.

3)
We take $s\in(0,4K)$ and $t\in(0,K')$. The reflection $(x,z)\mapsto (x,-z)$ is given by $s\mapsto 4K-s$.
Then \eqref{2:planar2} establishes a bijective map between $(s,t)$ and the region,
\[
Q_3=\{(x,z)\colon x>0\}\setminus \big\{(x,0)\colon 0<x\le b^{-1} \big\}.
\]

\section[R-separation of the Laplace equation]
{$\boldsymbol{{\mathcal R}}$-separation of the Laplace equation}\label{sec3}

Let $u(x,y,z)$ be a function defined in the region
\begin{equation}\label{3:tildeQ1}
 \tilde{Q}_1=\{(x\cos\phi,x\sin\phi ,z)\colon (x,z)\in Q_1,\, \phi\in(-\pi,\pi) \}
\end{equation}
with $Q_1$ from \eqref{2:Q1}.
 Note that $\tilde{Q}_1$ consists of all of $\R^3$ with the exception of the exterior of a~disk in the $xy$-plane centered at the origin with radius~$b$, and the half-plane $y=0$, $x\leq0$.
In~this region, we have the first variant of transcendental flat-ring coordinates with $s\in(-2K,2K)$, $t\in(0,K')$, $\phi\in(-\pi,\pi)$.

Using cylindrical coordinates
\[
x=r\cos\phi,\qquad y=r\sin\phi,\qquad z=z,
\]
we obtain
\begin{equation*}
 r^{1/2}\Delta u=\bigg[\frac{\partial^2}{\partial r^2}+\frac{\partial^2}{\partial z^2}+\frac1{r^2}\bigg(
 \frac{\partial^2}{\partial\phi^2}+\frac14\bigg)\bigg]\big(r^{1/2}u\big) .
\end{equation*}
Transforming to coordinates $s$, $t$ we find \cite[equation (10)]{Poole1}
\begin{equation}\label{3:pde2}
r^{5/2}M\Delta u=\bigg[\frac{\partial^2}{\partial s^2}+\frac{\partial^2}{\partial t^2}+M\bigg(\frac{\partial^2}{\partial\phi^2}+\frac14\bigg)\bigg]\big(r^{1/2} u\big),
\end{equation}
where
\[
M=k^2\big(\sn^2(s,k)-\sn^2({\rm i}t,k)\big).
\]
Substituting
\[
r^{1/2}u=u_1(s)u_2({\rm i}t)u_3(\phi)
\]
we obtain the following theorem.

\begin{thm}\label{3:t1}
Suppose that $u$ has the form
\begin{equation}\label{3:sepsol}
 u(x,y,z)=\big(x^2+y^2\big)^{-1/4} u_1(s)u_2({\rm i}t)u_3(\phi),
\end{equation}
where $s\in(-2K,2K)$, $t\in(0,K')$, and $\phi\in(-\pi,\pi)$.
If $u_1$ and $u_2$ satisfy the Lam\'e equation
\begin{equation}\label{3:Lame1}
\frac{{\mathrm d}^2w}{{\rm d}\zeta^2}+\bigg(h-\bigg(m^2-\frac14\bigg)k^2\sn^2(\zeta,k)\bigg)w=0,
\end{equation}
and $u_3$ satisfies
\begin{equation} \label{3:fourier}
\frac{{\mathrm d}^2u_3}{{\rm d}\phi^2}+m^2u_3 =0
\end{equation}
for some constants $m$, $h$, then $u(x,y,z)$ satisfies $\Delta u=0$.
\end{thm}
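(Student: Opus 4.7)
The plan is to verify the theorem by direct substitution into the reformulated Laplacian \eqref{3:pde2}, which already incorporates both the modulation factor $r^{1/2}$ and the transition to coordinates $(s,t,\phi)$. Since \eqref{3:sepsol} is equivalent to $r^{1/2}u = u_1(s)\,u_2(it)\,u_3(\phi)$, it suffices to show that the bracketed differential operator on the right-hand side of \eqref{3:pde2} annihilates this product.

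First I would apply each piece of that operator to $u_1(s)u_2(it)u_3(\phi)$. The $\partial_s^2$ term yields $u_1''(s)u_2(it)u_3(\phi)$ directly. For the $\partial_t^2$ term, the chain rule applied at $\zeta=it$ gives $\partial_t^2 u_2(it) = -u_2''(it)$, so its contribution is $-u_1(s)u_2''(it)u_3(\phi)$. The remaining piece is $M\bigl(u_3''(\phi) + \tfrac{1}{4}u_3(\phi)\bigr)u_1(s)u_2(it)$.

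Next I would substitute the three ordinary differential equations. From \eqref{3:Lame1} we have $u_1''(s) = -\bigl(h - (m^2 - \tfrac{1}{4})k^2\sn^2(s,k)\bigr)u_1(s)$ and $u_2''(it) = -\bigl(h - (m^2 - \tfrac{1}{4})k^2\sn^2(it,k)\bigr)u_2(it)$, so the separation constant $h$ cancels in the combination $u_1''(s)u_2(it) - u_1(s)u_2''(it)$, leaving
\[
u_1''(s)u_2(it) - u_1(s)u_2''(it) = \bigl(m^2 - \tfrac{1}{4}\bigr)k^2\bigl(\sn^2(s,k) - \sn^2(it,k)\bigr)u_1(s)u_2(it) = \bigl(m^2 - \tfrac{1}{4}\bigr)M\,u_1(s)u_2(it).
\]
Meanwhile, \eqref{3:fourier} converts the third contribution into $M\bigl(-m^2 + \tfrac{1}{4}\bigr)u_1 u_2 u_3 = -\bigl(m^2 - \tfrac{1}{4}\bigr)M\,u_1 u_2 u_3$. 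The two terms cancel exactly, so the bracketed operator in \eqref{3:pde2} annihilates $u_1 u_2 u_3$, and hence $\Delta u = 0$ at every point of $\tilde{Q}_1$ where the prefactor $r^{5/2}M$ is nonzero; harmonicity extends by continuity to the degenerate locus.

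The computation is essentially bookkeeping. The only subtle point is the sign flip from $\partial_t^2 u_2(it) = -u_2''(it)$, which is exactly what turns a sum of the two Lam\'e potentials into the difference $\sn^2(s,k) - \sn^2(it,k) = M/k^2$ needed to cancel against the angular term. Once that sign is correctly tracked, the elimination of both copies of $h$ and the matching of the two $\tfrac{1}{4}$'s are automatic; this matching is precisely the $R$-separation mechanism responsible for the modulation factor $r^{-1/2}$ appearing in \eqref{3:sepsol}.
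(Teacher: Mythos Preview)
Your proof is correct and follows exactly the approach of the paper: the paper derives \eqref{3:pde2} and then simply states that substituting $r^{1/2}u=u_1(s)u_2(it)u_3(\phi)$ yields the theorem, and you have written out that substitution in detail. The only superfluous point is the remark about the degenerate locus---in $\tilde Q_1$ one has $r>0$ and $M=k^2(\sn^2(s,k)-\sn^2(it,k))>0$ since $\sn^2(it,k)<0$ for $t\in(0,K')$, so no continuity extension is needed.
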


Note that $u_1$ and $u_2$ satisfy the same differential equation but on different intervals $\zeta\in(-2K,2K)$ and $\zeta\in (0,{\rm i}K')$, respectively. Since the function $\sn^2(\zeta,k)$ is analytic on the strip $|\operatorname{Im} \zeta|<K'$, $u_1$, $u_2$ extend analytically to this strip.

\section{Lam\'e functions of the first and second kind}\label{sec4}

In this section we consider particular solutions of the Lam\'e equation that are needed to introduce flat-ring harmonics
in Section \ref{sec5}.
The Lam\'e equation is
\begin{equation}\label{4:Lame}
\frac{{\mathrm d}^2{\sf E}}{{\mathrm d}s^2}+\big(h-\nu(\nu+1)k^2\sn^2(s,k)\big) {\sf E}=0 ,
\end{equation}
where $\nu\ge -\frac12$ and the modulus $k$ of the {Jacobian} elliptic function $\sn$ satisfies $0<k<1$.
The simply-periodic
Lam\'e functions are those functions
which are solutions of the Lam\'{e} equation~\eqref{4:Lame} with
period $4K$. We refer to these as Lam\'e functions
of the first kind.
In the application to flat-ring coordinates $k$ is fixed ($k$ is determined by the coordinate system)
while $\nu=-\tfrac12,\frac12,\frac32,\dots$.
The constant $h$ is the eigenvalue parameter.
The function $\sn^2(s,k)$ is even and has period $2K$ so that \eqref{4:Lame} is a special case of an even Hill equation \cite[Section~28.29({\rm i})]{NIST:DLMF}.
The eigenvalue problem asks for values of~$h$ such that~\eqref{4:Lame} has a nontrivial solution ${\sf E}(s)$ with period~$4K$.
This eigenvalue problem is treated in \cite[equation~(15.5.1)]{ErdelyiHTFIII} and \cite[Theorem~1.2]{MagnusWinkler66}.
The eigenvalue problem splits into four regular Sturm--Liouville problems with separated boundary conditions
\cite[Chapter~VI]{Walter98} according to Table~\ref{4:table2}.
\begin{table}[ht]
\setlength{\arraycolsep}{2mm}
\renewcommand{\arraystretch}{1.2}
\begin{center}
\caption{Notation of simply-periodic Lam\'e functions. \label{4:table2}}
\vspace{2mm}
{\small\begin{tabular}{|l|l|l|l|}
\hline
\multicolumn{1}{|c|}{\rule{0pt}{12pt} boundary conditions$\vphantom{\big)_{q_q}}$} & \multicolumn{1}{c|}{eigenvalues} & \multicolumn{1}{c|}{eigenfunctions} & \multicolumn{1}{c|}{period}
\\
\hline
 ${\sf E}(0)={\sf E}(K)=0$ & $b_\nu^{2n+2}(\kk)$ & $\Es_\nu^{2n+2}(s,\kk)$ & ${\sf E}(s+2K)={\sf E}(s)$ \\
 ${\sf E}'(0)={\sf E}(K)=0$& $b_\nu^{2n+1}(\kk)$ & $\Es_\nu^{2n+1}(s,\kk)$ & ${\sf E}(s+2K)=-{\sf E}(s)$ \\
 ${\sf E}(0)={\sf E}'(K)=0$ & $a_\nu^{2n+1}(\kk)$ & $\Ec_\nu^{2n+1}(s,\kk)$ & ${\sf E}(s+2K)=-{\sf E}(s)$ \\
 ${\sf E}'(0)={\sf E}'(K)=0$& $a_\nu^{2n}(\kk)$ & $\Ec_\nu^{2n}(s,\kk)$ & ${\sf E}(s+2K)={\sf E}(s)$\\
 \hline
\end{tabular}}
\end{center}
\end{table}

\noindent
In this table, in each case $n\in\N_0$ denotes the number of zeros of the eigenfunction in the interval $(0,K)$.
All eigenfunctions are normalized so that
\[
\int_0^K {\sf E}(s)^2\,{\mathrm d}s =1 .
\]

\begin{thm}\label{4:t1}
Each of the systems
\[
\big\{\Es_\nu^{2n+2}\big\}_{n=0}^\infty,\qquad
\big\{\Es_\nu^{2n+1}\big\}_{n=0}^\infty,\qquad
\big\{\Ec_\nu^{2n+1}\big\}_{n=0}^\infty,\qquad
\big\{\Ec_\nu^{2n}\big\}_{n=0}^\infty,
\]
forms an orthonormal basis in the Hilbert space $L^2(0,K)$.
All four systems combined form an orthonormal basis of $L^2(-2K,2K)$ provided each function is multiplied by $\frac12$.
\end{thm}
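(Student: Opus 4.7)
The plan is to reduce the theorem to two standard ingredients: classical Sturm--Liouville completeness on $(0,K)$ for the first claim, and a parity decomposition of $L^2(-2K,2K)$ for the second.

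For the first claim, write the Lam\'e equation \eqref{4:Lame} in self-adjoint form
\[
-{\sf E}''(s)+\nu(\nu+1)k^2\sn^2(s,k){\sf E}(s)=h\,{\sf E}(s),
\]
where the potential $\nu(\nu+1)k^2\sn^2(\,\cdot\,,k)$ is continuous and real-valued on $[0,K]$, so that each of the four entries of Table \ref{4:table2} is a \emph{regular} Sturm--Liouville problem on $[0,K]$ with separated self-adjoint boundary conditions. I would invoke the standard oscillation/completeness theorem (for instance the version in Walter \cite[Ch.~VI]{Walter98}) to conclude that the eigenvalues form an increasing sequence tending to $+\infty$, that the $n$-th normalized eigenfunction has exactly $n$ zeros in $(0,K)$, and that the sequence of normalized eigenfunctions is a complete orthonormal basis of $L^2(0,K)$. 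This immediately gives the first assertion for each of the four systems.

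For the combined statement, I would exploit that $\sn^2(\cdot,k)$ satisfies both $\sn^2(-s,k)=\sn^2(s,k)$ and $\sn^2(2K-s,k)=\sn^2(s,k)$, so the reflections $\sigma_0: s\mapsto -s$ and $\sigma_K: s\mapsto 2K-s$ each preserve the Lam\'e operator and act as commuting involutions on $L^2(-2K,2K)$. They generate a Klein four-group, and the associated character decomposition writes
\[
L^2(-2K,2K)=V_{++}\oplus V_{+-}\oplus V_{-+}\oplus V_{--},
\]
where the signs indicate whether $f\circ\sigma_0=\pm f$ and $f\circ\sigma_K=\pm f$. The subspaces are mutually orthogonal in $L^2(-2K,2K)$. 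Reading off the boundary conditions in Table \ref{4:table2} shows that $\Ec_\nu^{2n}$ lies in $V_{++}$, $\Es_\nu^{2n+1}$ in $V_{+-}$, $\Ec_\nu^{2n+1}$ in $V_{-+}$, and $\Es_\nu^{2n+2}$ in $V_{--}$, matching the four parity classes.

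It remains to observe that restriction to $(0,K)$ is an isometry, up to a factor, between each $V_{\pm\pm}$ and $L^2(0,K)$. Indeed, a function $f\in V_{\pm\pm}$ is determined by its values on $(0,K)$, and by splitting the integral $\int_{-2K}^{2K}|f|^2$ into the four sub-intervals of length $K$ and applying the two reflection symmetries, one gets $\|f\|_{L^2(-2K,2K)}^2=4\|f|_{(0,K)}\|_{L^2(0,K)}^2$. Thus each normalized Lam\'e function has $L^2(-2K,2K)$-norm equal to $2$, explaining the factor $\tfrac12$, and the completeness of the four restricted systems on $L^2(0,K)$ established above lifts to completeness of the combined system on $L^2(-2K,2K)$. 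The main (and only real) obstacle is a careful bookkeeping of the parity classes to match each Lam\'e function with the correct subspace $V_{\pm\pm}$; everything else is routine Sturm--Liouville theory and orthogonal decomposition.
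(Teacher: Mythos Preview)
Your proposal is correct and follows essentially the same reasoning the paper relies on: the paper does not give an explicit proof of this theorem but simply states it after observing that the periodic eigenvalue problem ``splits into four regular Sturm--Liouville problems with separated boundary conditions'' (referencing \cite[Ch.~VI]{Walter98} and Table~\ref{4:table2}), which is precisely the first half of your argument, while the parity and period information recorded in Table~\ref{4:table2} is exactly what your Klein four-group decomposition makes explicit. Your write-up thus supplies the details the paper leaves to the reader; the only minor addition you might make is to note explicitly that the restriction map $V_{\pm\pm}\to L^2(0,K)$ is surjective (any $g\in L^2(0,K)$ extends to $(-2K,2K)$ by the two reflections), so that completeness on $(0,K)$ indeed transfers to completeness on each $V_{\pm\pm}$.
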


In our application of simply-periodic Lam\'e functions to flat-ring coordinates we also
need some estimates for these functions that we provide in the following.

\begin{lemma}\label{4:l1}
We have
\begin{gather*}
\frac{\pi^2n^2}{4K^2}\le a_\nu^n(\kk)\le \nu(\nu+1)k^2+\frac{\pi^2n^2}{4K^2}\qquad\text{if}\quad \nu\ge 0,
\\
\nu(\nu+1)k^2+\frac{\pi^2n^2}{4K^2} \le a_\nu^n(\kk)\le \frac{\pi^2n^2}{4K^2}\qquad\text{if}\quad -\frac12\le \nu<0.
\end{gather*}
The same estimates hold with $b_\nu^{n+1}$ in place of $a_\nu^n$.
\end{lemma}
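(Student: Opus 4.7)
The plan is to deduce these two-sided bounds from the classical monotonicity (Courant--Fischer) principle applied to the four regular Sturm--Liouville problems whose eigenvalues collectively form $\{a_\nu^n(k)\}$ and $\{b_\nu^n(k)\}$. Each Lam\'e eigenvalue is the appropriately enumerated eigenvalue of $-w''+V(s)w=\lambda w$ on $(0,K)$ with potential $V(s)=\nu(\nu+1)k^2\sn^2(s,k)$ and one of the four pairs of separated boundary conditions listed in Table~\ref{4:table2}. Comparison with the two reference problems obtained by replacing $V$ by the constants $0$ and $\nu(\nu+1)k^2$ will produce the desired bounds.

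First I would verify that, for each of the four boundary conditions, the eigenvalue of the free problem $-w''=\lambda w$ whose eigenfunction has $n$ zeros in $(0,K)$ equals $\pi^2 N^2/(4K^2)$, where $N\in\{2n,2n+1,2n+2\}$ is the superscript used in Table~\ref{4:table2}. For $w(0)=w(K)=0$ (superscript $2n+2$) the eigenfunction $\sin((n+1)\pi s/K)$ gives eigenvalue $\pi^2(2n+2)^2/(4K^2)$; for $w'(0)=w'(K)=0$ (superscript $2n$) the eigenfunction $\cos(n\pi s/K)$ gives $\pi^2(2n)^2/(4K^2)$; for the two mixed problems (superscript $2n+1$) the eigenfunctions $\sin((2n+1)\pi s/(2K))$ and $\cos((2n+1)\pi s/(2K))$ both give $\pi^2(2n+1)^2/(4K^2)$. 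Replacing the zero potential by a constant $c$ shifts every eigenvalue by $c$, so the corresponding constant-potential eigenvalues are $c+\pi^2 N^2/(4K^2)$.

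Next I would invoke the Sturm--Liouville monotonicity principle: if two problems share the same separated boundary conditions and their potentials satisfy $V_1\le V_2$ pointwise on $(0,K)$, then the correspondingly enumerated eigenvalues satisfy $\lambda_n(V_1)\le \lambda_n(V_2)$. Since $0\le \sn^2(s,k)\le 1$ on $(0,K)$, one has $0\le V(s)\le \nu(\nu+1)k^2$ when $\nu\ge 0$ and $\nu(\nu+1)k^2\le V(s)\le 0$ when $-\tfrac12\le \nu<0$. Sandwiching $V$ between the two constants and applying the principle from both sides yields the stated inequalities for $a_\nu^n(k)$; the same argument applied to the complementary pair of boundary conditions yields the corresponding inequalities for $b_\nu^{n+1}(k)$.

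The only delicate point is keeping the enumeration aligned: the superscript on the Lam\'e function labels the global position of the eigenvalue within the full simply-periodic spectrum on $(-2K,2K)$, while the comparison lives inside each individual subproblem on $(0,K)$ where eigenvalues are indexed by the number $n$ of zeros in the open interval. Once the correspondence between the superscript $N$ and the zero-count $n$ is fixed as in the first step, no substantive analytical difficulty remains and the remainder is a direct application of the standard comparison theorem.
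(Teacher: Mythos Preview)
Your argument is correct and is essentially the same as the paper's: bound the potential $\nu(\nu+1)k^2\sn^2(s,k)$ pointwise between the constants $0$ and $\nu(\nu+1)k^2$ and invoke the standard Sturm--Liouville comparison theorem for eigenvalues. The paper simply cites the comparison theorem and omits the explicit verification of the constant-potential reference eigenvalues and the enumeration bookkeeping that you spell out.
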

\begin{proof}
The stated inequalities follow from
\begin{gather*}
0\le \nu(\nu+1)k^2\sn^2(s,k)\le \nu(\nu+1)k^2 \qquad \text{if}\quad \nu\ge 0,
\\
\nu(\nu+1)k^2\le \nu(\nu+1)k^2\sn^2(s,k)\le 0 \qquad \text{if}\quad -\frac12\le \nu<0,
\end{gather*}
using a comparison theorem for eigenvalues \cite[Section~27.IX]{Walter98}.
\end{proof}

The next lemma is a refinement of a result in \cite[p.~288]{CourantHilbert1968}.

\begin{lemma}\label{4:l2}
Let $q\colon [a,b]\to\R$ be a continuous function, $\lambda$ a real number, and $y\colon [a,b]\to \R$ a~solution of the
differential equation
\begin{equation}\label{4:SLode}
-y''+q(s)y =\lambda y
\end{equation}
such that $y(a)y'(a)=y(b)y'(b)=0$, and $\int_a^b y(s)^2\,\mathrm{d}s=1$.
Then
\[
y(s)^2\le 2(b-a)^{-1} + 2(R(q))^{1/2}\qquad \text{for}\quad s\in[a,b],
\]
where
\[
R(q):=\max\{q(s)\colon a\le s\le b\}-\min\{q(s)\colon a\le s\le b\}.
\]
\end{lemma}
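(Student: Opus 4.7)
The plan rests on the standard Sturm--Liouville energy identity. Multiplying \eqref{4:SLode} by $y$, integrating on $[a,b]$, and using the boundary conditions $y(a)y'(a)=y(b)y'(b)=0$ together with the normalization $\int_a^b y^2\,\mathrm{d}s=1$ gives
\[
\int_a^b \bigl(y'(t)\bigr)^2\,\mathrm{d}t = \lambda - \int_a^b q(t)\,y(t)^2\,\mathrm{d}t.
\]
Setting $m=\min_{[a,b]}q$ and $M=\max_{[a,b]}q$ (so that $R=M-m$), this at once yields $\int_a^b(y')^2\,\mathrm{d}t\le \lambda-m$; I will also use, repeatedly, the Cauchy--Schwarz bound $\int_a^b|yy'|\,\mathrm{d}t\le\bigl(\int_a^b(y')^2\,\mathrm{d}t\bigr)^{1/2}$. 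From here the argument splits into two cases according to the sign of $\lambda-M$, and in each of them the $2(b-a)^{-1}+2\sqrt{R}$ bound falls out.

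\textbf{Case 1: $\lambda\le M$.} Then $\int(y')^2\le \lambda-m\le R$. Choose $s_0$ at which $y^2$ attains its minimum, so $y(s_0)^2\le (b-a)^{-1}$ (the average of $y^2$). From $y(s)^2-y(s_0)^2=2\int_{s_0}^s yy'\,\mathrm{d}t$ and Cauchy--Schwarz,
\[
y(s)^2 \le \frac{1}{b-a} + 2\sqrt{R} \le \frac{2}{b-a}+2\sqrt{R}.
\]

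\textbf{Case 2: $\lambda>M$, equivalently $\lambda-m>R$.} Introduce the Pr\"ufer-type energy $\psi(s):=(y'(s))^2+(\lambda-m)y(s)^2\ge 0$. Differentiating and using $y''=(q-\lambda)y$ gives
\[
\psi'(s)=2\bigl(q(s)-m\bigr)y(s)y'(s),
\]
so $|\psi'|\le 2R|yy'|$ and hence $\int_a^b|\psi'|\,\mathrm{d}t\le 2R\sqrt{\lambda-m}$. Let $s^*$ be a point at which $y^2$ is maximal; since $y(s^*)^2\ge (b-a)^{-1}>0$, both the boundary hypotheses and the interior extremality condition force $y'(s^*)=0$, so $\psi(s^*)=(\lambda-m)y(s^*)^2$. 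Choosing $s_0$ at the minimum of $\psi$ and using the energy identity gives $\psi(s_0)\le (b-a)^{-1}\int_a^b\psi\,\mathrm{d}t = (b-a)^{-1}(2\lambda-m-\int qy^2)\le 2(\lambda-m)/(b-a)$. Combining,
\[
(\lambda-m)\,y(s^*)^2 \le \psi(s_0)+\int_a^b|\psi'|\,\mathrm{d}t \le \frac{2(\lambda-m)}{b-a}+2R\sqrt{\lambda-m},
\]
and dividing through by $\lambda-m\ge R$ produces $y(s)^2\le 2/(b-a)+2R/\sqrt{\lambda-m}\le 2/(b-a)+2\sqrt{R}$.

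The main obstacle is that neither strategy works uniformly in $\lambda$: the crude Cauchy--Schwarz estimate $y^2\le (b-a)^{-1}+2\sqrt{\int(y')^2}$ fails when $\lambda$ is large, since $\int(y')^2$ can grow linearly in $\lambda$, while the energy argument for $\psi$ degenerates when $\lambda-m$ is too small to divide out. The decisive observation is that $\lambda=M$ is exactly the threshold at which the two bounds coincide, so the two cases mesh continuously and together cover every admissible $\lambda$.
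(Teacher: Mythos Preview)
Your argument is correct, but it differs substantially from the paper's. The paper does not split into cases on the size of $\lambda$. Instead it exploits the freedom to add the same constant to $q$ and $\lambda$: after shifting so that $m=\min q>0$ is arbitrary, it works with the single energy $y'(t)^2+\lambda y(t)^2$, averages the identity $y'(t)^2+\lambda y(t)^2=2\int_s^t q\,yy'+y'(s)^2+\lambda y(s)^2$ over $s\in[a,b]$, bounds the cross term by $(m+R)\sqrt{\lambda-m}$, and finally uses the calculus inequality $\lambda^{-1}\sqrt{\lambda-m}\le \tfrac12 m^{-1/2}$ together with the optimal choice $m=R$ to land on $2/(b-a)+2\sqrt{R}$.

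Your route sidesteps that optimization trick entirely: for $\lambda\le M$ the crude $y(s)^2\le (b-a)^{-1}+2\sqrt{\int(y')^2}$ already suffices, and for $\lambda>M$ your modified energy $\psi=(y')^2+(\lambda-m)y^2$ has the pleasant feature that $\psi'=2(q-m)yy'$ carries the factor $q-m\in[0,R]$ automatically, so no shift is needed. The price is the case split and the slightly more delicate handling of the extremal points $s^\ast$ and $s_0$; the gain is that each case is more transparent and no hidden parameter has to be tuned. Both approaches hinge on the same energy identity and the Cauchy--Schwarz bound $\int|yy'|\le\sqrt{\int(y')^2}$, so the difference is one of organization rather than depth.
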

\begin{proof}
We set
\[
u=\min\{q(s)\colon a\le s\le b\},\qquad v=R(q).
\]
Obviously, the lemma is true when $v=0$ so we assume that $v>0$.
Since we may add the same constant to $q$ and $\lambda$, we also assume that $u>0$.
Multiplying \eqref{4:SLode} by $y$ and integrating from $a$ to $b$ we obtain
\[
\int_a^b y'(s)^2\,{\mathrm d}s +\int_a^b q(s)y(s)^2\,{\mathrm d}s =\lambda .
\]
It follows that
\begin{equation}\label{4:est2}
\int_a^b y'(s)^2\,{\mathrm d}s \le \lambda -u \le \lambda.
\end{equation}
Multiplying \eqref{4:SLode} by $y'$ and integrating from $s$ to $t$, we obtain
\begin{equation}\label{4:est3}
 y'(t)^2+\lambda y(t)^2=2f(s,t)+y'(s)^2 +\lambda y(s)^2 ,
 \end{equation}
where
\[ f(s,t):=\int_s^t q(r)y(r)y'(r)\,{\mathrm d}r. \]
Integrating \eqref{4:est3} from $s=a$ to $s=b$, we find
\begin{equation}\label{4:est4}
 y'(t)^2+\lambda y(t)^2=(b-a)^{-1}\bigg(2\int_a^b f(s,t)\,{\mathrm d}s +\int_a^b y'(s)^2\,{\mathrm d}t+\lambda\bigg) .
 \end{equation}
Using \eqref{4:est2} we have
\[
|f(s,t)|\le(u+v) \bigg(\int_a^b y'(s)^2\,{\mathrm d}t\bigg)^{1/2}\le (u+v)(\lambda-u)^{1/2}.
\]
Therefore, \eqref{4:est2} and~\eqref{4:est4} give
\[
\lambda y(t)^2\le y'(t)^2+\lambda y(t)^2\le 2(u+v)(\lambda-u)^{1/2}+(b-a)^{-1}2\lambda .
\]
By \eqref{4:est2}, $\lambda\ge u>0$, so we can divide by $\lambda$:
\[
y(t)^2\le 2(b-a)^{-1} + 2(u+v)\lambda^{-1}(\lambda-u)^{1/2}.
\]
Now
\[
\lambda^{-1}(\lambda-u)^{1/2}\le \frac12 u^{-1/2}\qquad\text{for}\quad \lambda\ge u,
\]
so we get
\[
y(t)^2 \le 2(b-a)^{-1} + (u+v)u^{-1/2}.
\]
Our $v>0$ is given but we can choose $u>0$ as we like.
Taking $u=v$ we get the desired estimate.
\end{proof}

\begin{lemma}\label{4:l3}
We have
\[
\left\{\Ec_\nu^{n}(s,\kk)\right\}^2\le \frac{2}{K}+2k|\nu|^{1/2}(\nu+1)^{1/2}\qquad\text{for}\quad n\in\N_0,\quad \nu\ge -\frac12, \quad s\in\R.
\]
The inequality also holds with $\Es_\nu^{n+1}$ in place of $\Ec_\nu^n$.
\end{lemma}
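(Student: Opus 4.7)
The plan is to recognize the Lam\'e equation \eqref{4:Lame} as a special case of the Sturm--Liouville equation \eqref{4:SLode} in Lemma~\ref{4:l2}, with
\[ q(s)=\nu(\nu+1)k^2\sn^2(s,k),\qquad \lambda=h, \]
and then to apply Lemma~\ref{4:l2} on the interval $[a,b]=[0,K]$ to each of the four families in Table~\ref{4:table2}. The boundary conditions listed in that table all imply $y(a)y'(a)=y(b)y'(b)=0$, and the functions are normalized so that $\int_0^K{\sf E}(s)^2\,{\mathrm d}s=1$, so Lemma~\ref{4:l2} applies directly.

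First I would compute the oscillation
\[ R(q)=\max_{[0,K]}q-\min_{[0,K]}q. \]
Since $\sn^2(s,k)$ increases from $0$ to $1$ on $[0,K]$, we have $R(q)=|\nu(\nu+1)|k^2=|\nu|(\nu+1)k^2$ in both cases $\nu\ge0$ and $-\tfrac12\le\nu<0$ (using $\nu+1\ge\tfrac12>0$). Therefore $\bigl(R(q)\bigr)^{1/2}=k|\nu|^{1/2}(\nu+1)^{1/2}$, and Lemma~\ref{4:l2} yields
\[ {\sf E}(s)^2\le \frac{2}{K}+2k|\nu|^{1/2}(\nu+1)^{1/2}\quad\text{for }s\in[0,K], \]
for each of $\Ec_\nu^{2n}$, $\Ec_\nu^{2n+1}$, $\Es_\nu^{2n+1}$, $\Es_\nu^{2n+2}$.

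The remaining step is to promote this bound from $[0,K]$ to all of $\R$. Since $\sn^2(\,\cdot\,,k)$ is even with period $2K$ and satisfies $\sn^2(K-s,k)=\sn^2(K+s,k)$, the boundary conditions in Table~\ref{4:table2} combined with uniqueness for the Lam\'e equation force each ${\sf E}(s)$ to be either even or odd about $s=0$ and about $s=K$, and to have period $2K$ or anti-period $2K$. In every one of the four cases this means that ${\sf E}(s)^2$ is a $2K$-periodic function which is symmetric about $s=0$ and about $s=K$, so
\[ \sup_{s\in\R}{\sf E}(s)^2 = \sup_{s\in[0,K]}{\sf E}(s)^2, \]
which completes the proof.

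The only mildly delicate step is this last symmetry argument; the rest is a direct substitution into Lemma~\ref{4:l2}. I do not expect any real obstacle, but one must verify case by case that each of the four boundary-value combinations in Table~\ref{4:table2} produces an eigenfunction whose square inherits the reflection symmetries of $\sn^2$, so that controlling ${\sf E}$ on $[0,K]$ controls it everywhere.
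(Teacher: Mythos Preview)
Your proposal is correct and follows essentially the same route as the paper: apply Lemma~\ref{4:l2} on $[0,K]$ with $q(s)=\nu(\nu+1)k^2\sn^2(s,k)$, compute $R(q)=|\nu|(\nu+1)k^2$, and then extend to $\R$ using that each eigenfunction is even or odd about $0$ and about $K$ (equivalently, has period or half-period $2K$). The paper's own proof is just a two-line version of exactly this argument.
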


\begin{proof}
If $s\in[0,K]$ this follows from Lemma \ref{4:l2} with $q(s)=\nu(\nu+1)k^2\sn^2(s,k)$.
Then $R(q)=|\nu|(\nu+1)k^2$.
Since the simply-periodic Lam\'e functions are even or odd and have period or half-period $2K$, the stated inequality holds
for all $s\in\R$.
\end{proof}

Next, we need an estimate for the functions $\Ec_\nu^n({\rm i}t)$ and $\Es_\nu^{n+1}({\rm i}t)$ for $-K'<t<K'$.
Setting $W(t)={\sf E}({\rm i}t)$ and using \cite[Section~22.6({\rm i}v)]{NIST:DLMF}, \eqref{4:Lame} becomes the modified Lam\'e equation
\begin{equation}\label{4:modlame}
\frac{{\mathrm d}^2W}{{\mathrm d}t^2}-\left(h+\nu(\nu+1)k^2\ssc^2(t,k')\right)W=0 .
\end{equation}
In \eqref{4:modlame} $\ssc(t,k'):=\sn(t,k')/\cn(t,k')$ employs Glaisher's notation \cite[Section~22.2]{NIST:DLMF}.

It should be noted that $W(t)=\Ec_\nu^{n}({\rm i}t,\kk)$ is a solution of \eqref{4:modlame} with $h=a_\nu^n$.
If $n$ is even then $W(t)$ is a real-valued even solution. If $n$ is odd then ${\rm i}W(t)$ is a real-valued odd solution.
A similar statement holds for $\Es_\nu^{n+1}$.

The following lemma is a minor modification of Lemma 5.7 in \cite{CohlVolkcyclide1}.

\begin{lemma}\label{4:l4}
Let $u\colon [0,b]\to \R$ be a solution of the differential equation
\[
u''(t)=q(t)u(t),\qquad t\in[0,b],
\]
determined by the initial conditions $u(0)=1$, $u'(0)=0$ or $u(0)=0$, $u'(0)=1$, where $q$: $[0,b]\to\R$ is a continuous function.
Suppose that $q(t)\ge0$ on $[0,b]$ and $q(t)\ge \lambda^2$ on $[c,b]$ for some $\lambda>0$ and $c\in[0,b)$.
Then $u(t)>0$ and $u'(t)\ge 0$ for $t\in(0,b]$, and
\[
\frac{u(t)}{u(b)}\le 2\expe^{-\lambda(b-c)}\qquad\text{for all}\quad t\in[0,c].
\]
\end{lemma}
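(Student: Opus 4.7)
The plan is to first extract the sign information $u,u'\ge 0$ directly from the ODE, and then compare $u$ on $[c,b]$ with the solution of the constant-coefficient equation $v''=\lambda^2 v$ to get the exponential bound.

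For the positivity statement, I would argue that on any subinterval of $[0,b]$ where $u\ge 0$ one has $u''=qu\ge 0$, so $u'$ is nondecreasing there. In the case $u(0)=1,u'(0)=0$, this forces $u'\ge 0$ and hence $u\ge 1$ as long as $u\ge 0$; a continuity/maximality argument then gives $u\ge 1$ and $u'\ge 0$ on all of $[0,b]$. In the case $u(0)=0,u'(0)=1$, the same convexity argument yields $u'\ge 1$ on $(0,b]$, so $u(t)\ge t>0$. In both cases $u$ is nondecreasing, which will be used at the end.

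For the decay estimate, I would introduce the unique solution $v$ of $v''=\lambda^2 v$ on $[c,b]$ with $v(c)=u(c)$ and $v'(c)=u'(c)$, namely
\[
v(t)=u(c)\cosh(\lambda(t-c))+\frac{u'(c)}{\lambda}\sinh(\lambda(t-c)),
\]
and show $u\ge v$ on $[c,b]$. Setting $D=u-v$, one has $D(c)=D'(c)=0$ and
\[
D''(t)=q(t)u(t)-\lambda^2 v(t)=\lambda^2 D(t)+(q(t)-\lambda^2)u(t)\ge \lambda^2 D(t),
\]
since $q\ge\lambda^2$ and $u\ge 0$ on $[c,b]$. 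To convert the second-order inequality into the sign statement $D\ge 0$, I would factor the operator $\partial_t^2-\lambda^2$: set $\phi=D'-\lambda D$, so $\phi(c)=0$ and $\phi'\ge -\lambda\phi$, which rearranges to $(e^{\lambda t}\phi(t))'\ge 0$ and therefore $\phi\ge 0$ on $[c,b]$. A second application of the same factorization gives $(e^{-\lambda t}D(t))'=e^{-\lambda t}\phi(t)\ge 0$, and since $D(c)=0$ we conclude $D\ge 0$, i.e.\ $u\ge v$, on $[c,b]$.

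Since $u(c),u'(c)\ge 0$ and $\cosh x\ge \tfrac12 e^x$,
\[
u(b)\ge v(b)\ge u(c)\cosh(\lambda(b-c))\ge \tfrac12 u(c)\,e^{\lambda(b-c)}.
\]
Combined with the monotonicity $u(t)\le u(c)$ on $[0,c]$, this yields $u(t)/u(b)\le 2e^{-\lambda(b-c)}$, as required. I expect the main obstacle to be the comparison step: a direct appeal to Sturm's theorem is awkward because of the double zero of $D$ at $c$, and the clean route is the factorization into two first-order linear inequalities sketched above; once this is done the remaining arithmetic is automatic.
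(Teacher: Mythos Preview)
Your argument is correct. The positivity step via convexity is fine, and the comparison $u\ge v$ on $[c,b]$ through the factorization $(\partial_t-\lambda)(\partial_t+\lambda)D\ge 0$ is clean and rigorous; from there the estimate $u(b)\ge u(c)\cosh(\lambda(b-c))$ and the monotonicity of $u$ on $[0,c]$ give exactly the stated bound.

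The paper proves the same inequality $u(b)/u(c)\ge\cosh(\lambda(b-c))$ by a different device: instead of comparing $u$ with an explicit solution $v$, it passes to the Riccati variable $z=u'/u$, which satisfies $z'+z^2=q(t)$ with $z(c)\ge 0$, and then compares $z$ with the explicit solution $\lambda\tanh(\lambda(t-c))$ of $z'+z^2=\lambda^2$; integrating $z$ from $c$ to $b$ recovers $\log(u(b)/u(c))\ge\log\cosh(\lambda(b-c))$. So the two proofs reach the identical key estimate, but yours works at the level of the linear second-order equation (via operator factorization into two first-order integrating-factor steps), while the paper works at the level of the associated nonlinear first-order equation. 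Your route is arguably more elementary, since it never needs $u>0$ to form $u'/u$ and avoids the scalar Riccati comparison; the paper's route is a bit shorter once one is comfortable with Riccati comparisons. Either way, the substance is the same.
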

\begin{proof}
Since $q(t)\ge 0$ it follows from the initial conditions that $u(t)>0, u'(t)\ge 0$ for $t\in(0,b]$.
The function $z=u'/u$ satisfies the Riccati equation
$z'+z^2=q(t)$, and we know that $z(c)\ge 0$.
Therefore,
\[
z(t)\ge \lambda\tanh(\lambda(t-c))\qquad\text{for}\quad t\in[c,b].
\]
Integrating from $t=c$ to $t=b$ gives
\[
\log \frac{u(b)}{u(c)}\ge \log\cosh(\lambda(b-c))\ge \log\bigg(\frac12\expe^{\lambda(b-c)}\bigg),
\]
which yields the claim since $u$ is nondecreasing.
\end{proof}

\begin{lemma}\label{4:l5}
Let $0<c<b<K'$. Then there is a constant $p\in(0,1)$ independent of $\nu$, $n$ and~$t$ such that
\[
0\le \frac{\Ec_\nu^n({\rm i}t,\kk)}{\Ec_\nu^n({\rm i}b,\kk)}\le 2 p^{n+\nu}\qquad\text{for}\quad \nu>0,\quad n\in\N_0,\quad t\in[0,c].
\]
The same result is true for $\Es_\nu^{n+1}$ in place of $\Ec_\nu^n$.
\end{lemma}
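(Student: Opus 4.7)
My plan is to reduce to Lemma \ref{4:l4} applied to the modified Lamé equation \eqref{4:modlame} with $h = a_\nu^n$. By the parity remarks preceding Lemma \ref{4:l4}, either $W(t) := \Ec_\nu^n(it,k)$ itself (when $n$ is even, so $W(0) \neq 0$ and $W'(0) = 0$) or $iW(t)$ (when $n$ is odd, so $W(0) = 0$ and $W'(0) \neq 0$) is a real-valued solution of \eqref{4:modlame} whose initial data at $t=0$, after division by a nonzero real constant, match one of the two initial conditions $u(0)=1,\,u'(0)=0$ or $u(0)=0,\,u'(0)=1$ required by Lemma \ref{4:l4}. Since the ratio $\Ec_\nu^n(it)/\Ec_\nu^n(ib)$ is invariant under this rescaling, it coincides with $u(t)/u(b)$ for the positive, nondecreasing function $u$ delivered by Lemma \ref{4:l4}; in particular the ratio is automatically nonnegative.

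Next I will verify the hypotheses of Lemma \ref{4:l4} for $q(t) = a_\nu^n + \nu(\nu+1) k^2 \ssc^2(t, k')$ on $[0,b]$, choosing $\lambda$ of order $n+\nu$. Since $\nu > 0$, Lemma \ref{4:l1} gives $a_\nu^n \geq \pi^2 n^2 / (4 K^2) \geq 0$, and the second summand is nonnegative, so $q \geq 0$ on $[0,b]$. Because $\ssc^2(\cdot, k')$ is strictly increasing on $[0, K')$ and $\nu(\nu+1) \geq \nu^2$, I obtain on $[c,b]$ the estimate
\[
q(t) \geq \frac{\pi^2 n^2}{4K^2} + k^2 \ssc^2(c, k')\, \nu^2 \geq \beta (n^2 + \nu^2) \geq \tfrac12 \beta (n+\nu)^2,
\]
where $\beta := \min\bigl(\pi^2/(4K^2),\, k^2 \ssc^2(c, k')\bigr) > 0$ depends only on $c$ and $k$. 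Applying Lemma \ref{4:l4} with $\lambda = \sqrt{\beta/2}\,(n+\nu)$ then yields
\[
\frac{\Ec_\nu^n(it,k)}{\Ec_\nu^n(ib,k)} \;=\; \frac{u(t)}{u(b)} \;\leq\; 2\, \expe^{-\sqrt{\beta/2}\,(b-c)(n+\nu)} \;=\; 2\, p^{n+\nu},
\]
with $p := \expe^{-\sqrt{\beta/2}\,(b-c)} \in (0,1)$ independent of $\nu$, $n$, and $t$, which is the desired inequality.

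The case of $\Es_\nu^{n+1}$ is handled identically, invoking the second half of Lemma \ref{4:l1} to bound $b_\nu^{n+1}$ from below in the same way. I do not anticipate a serious obstacle; the only delicate point is extracting a rate that is linear in $n+\nu$ from two lower bounds on $q$, one quadratic in $n$ (coming from the eigenvalue) and one quadratic in $\nu$ (coming from the potential term), and the inequality $n^2+\nu^2 \geq \tfrac12(n+\nu)^2$ is the decisive step that makes this work uniformly in both parameters.
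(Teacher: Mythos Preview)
Your proof is correct and follows essentially the same approach as the paper: apply Lemma~\ref{4:l4} to the modified Lam\'e equation with $q(t)=a_\nu^n+\nu(\nu+1)k^2\ssc^2(t,k')$, use Lemma~\ref{4:l1} to get $q\ge 0$, bound $q$ from below on $[c,b]$ via the monotonicity of $\ssc^2$, and extract a rate linear in $n+\nu$ using $n^2+\nu^2\ge\tfrac12(n+\nu)^2$. Your explicit discussion of the parity normalization is a bit more detailed than the paper's, but the argument is the same.
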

\begin{proof}
Apply Lemma \ref{4:l4} with $q(t)=a_\nu^n(\kk)+\nu(\nu+1)\ssc^2(t,k')$.
Since $\nu>0$, Lemma \ref{4:l1} yields $a_\nu^n(\kk)\ge 0$ and so $q(t)\ge 0$.
Let $\lambda^2=q(c)>0$. Then $q(t)\ge \lambda^2$ on $[c,b]$.
Since
\[
\lambda^2=q(c)\ge \frac{\pi^2n^2}{4K^2}+\nu(\nu+1)k^2\ssc^2(c,k')\ge \gamma^2 (n+\nu)^2,
\]
where
\[
\gamma=2^{-1/2}\min\bigg\{\frac{\pi}{2K},\,k\ssc(c,k')\bigg\}.
\]
This implies the assertion with $p=\expe^{-\gamma (b-c)}$.
\end{proof}

Unfortunately, the case $\nu=-\frac12$ is not covered by Lemma \ref{4:l5}.
It is not even clear that $\Ec_{-1/2}^n({\rm i}t,\kk)\ne 0$ for $0<t<K'$.
To deal with this case we apply the transformation
\[
W(t)=\cd^{1/2}(t,k')\, w(t)
\]
to \eqref{4:modlame}, and obtain
\begin{equation}\label{4:transmodlame}
\frac{{\mathrm d}^2w}{{\mathrm d}t^2}-k^2\frac{\sn(t,k')}{\cn(t,k')\dn(t,k')}\frac{\mathrm{d}w}{{\mathrm d}t}-Q(t)w=0,
\end{equation}
where
\[
Q(t)=h-\frac14k^2+\big(\nu+\tfrac12\big)^2k^2\ssc^2(t,k')+\frac34 k^2\nd^2(t,k') .
\]
Equation \eqref{4:transmodlame} has regular singularities at $t=\pm K'$ with exponents $\pm\big(\nu+\frac12\big)$ at both points.

\begin{lemma}\label{4:l6}
If $\nu>0$, $n\in\N_0$, the function $|\Ec_\nu^n({\rm i}t,\kk)|$ is increasing for $0<t<K'$ and diverges to infinity as $t\to K'$.
If $-\frac12\le \nu\le 0$, $n\in\N_0$, the function $\dc^{1/2}(t,k')|\Ec_\nu^n({\rm i}t,\kk)|$ is increasing for $0<t<K'$ and diverges to infinity as $t\to K'$.
In particular, $\Ec_\nu^n({\rm i}t,\kk)\ne 0$ for $0<t<K'$.
The same statements hold with $\Es_\nu^{n+1}$ in place of $\Ec_\nu^n$.
\end{lemma}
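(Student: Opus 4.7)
The plan is to express each complexified Lam\'e function as a real-valued solution $u$ of a second-order linear ODE with nonnegative zeroth-order coefficient, establish strict monotonicity via convexity, and then invoke the behaviour at the regular singularity $t=K'$ for divergence. Table~\ref{4:table2} guarantees that either $W(t):=\Ec_\nu^n(it,\kk)$ or $iW(t)$ is real-valued, with initial data $u(0)>0,\ u'(0)=0$ or $u(0)=0,\ u'(0)>0$ depending on the parity of $n$, after a sign normalization. The $\Es_\nu^{n+1}$ case is identical, so I only treat $\Ec$.

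For $\nu>0$, I would use \eqref{4:modlame} directly: the coefficient $q(t):=a_\nu^n(\kk)+\nu(\nu+1)k^2\ssc^2(t,k')$ is nonnegative, and strictly positive for $t>0$, by Lemma~\ref{4:l1}. Thus $u''=qu\ge 0$, so $u'$ is nondecreasing; combined with the initial data this yields $u'>0$ and $u>0$ on $(0,K')$, so $|W|=u$ is strictly increasing. The equation \eqref{4:modlame} has a regular singularity at $t=K'$ with Frobenius exponents $\nu+1$ and $-\nu$, so any solution is a linear combination of a vanishing piece of order $(K'-t)^{\nu+1}$ and a diverging piece of order $(K'-t)^{-\nu}$. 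Monotonicity forces $u(t)\ge u(t_0)>0$ for any $t_0\in(0,K')$, ruling out that $u$ is proportional to the vanishing solution, so $u$ must diverge to $+\infty$.

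For $-\tfrac12\le\nu\le 0$, I would substitute $W=\cd^{1/2}(t,k')\,w$ to land on \eqref{4:transmodlame}. Its zero-order coefficient satisfies
\[ Q(t)=h-\tfrac14 k^2+(\nu+\tfrac12)^2 k^2\ssc^2(t,k')+\tfrac34 k^2\nd^2(t,k')\ge h+\tfrac12 k^2\ge \tfrac14 k^2>0,\]
using $\nd^2\ge 1$ and $h\ge\nu(\nu+1)k^2\ge -\tfrac14 k^2$ from Lemma~\ref{4:l1}, and the first-derivative coefficient $a(t):=k^2\sn(t,k')/(\cn(t,k')\dn(t,k'))$ is nonnegative on $(0,K')$. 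Since $(\cd^{1/2})'(0)=0$, the initial data of $w$ is inherited from that of $W$, and the same convexity argument gives $w>0$ and $w'>0$ on $(0,K')$; hence $|w|=\dc^{1/2}(t,k')|\Ec_\nu^n(it,\kk)|$ is strictly increasing. For divergence, $w''=Qw+aw'\ge aw'$ implies $(\log w')'\ge a$ on $(0,K')$; using the computable antiderivatives $\int a(s)\,ds=\log\dc(s,k')$ and $\int\dc(s,k')\,ds=\tfrac12\log\frac{1+\sn(s,k')}{1-\sn(s,k')}$, one integration gives $w'(t)\ge w'(c)\,\dc(t,k')/\dc(c,k')$ and a second gives $w(t)\to+\infty$ as $t\to K'$, because $\sn(t,k')\to 1$. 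The non-vanishing claim follows from $u>0$.

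I expect the main obstacle to be the borderline case $\nu=-\tfrac12$: there $(\nu+\tfrac12)^2=0$, so $Q$ stays bounded on $(0,K')$ and the Frobenius exponents of \eqref{4:transmodlame} at $K'$ collide at $0$, producing a bounded and a merely logarithmically divergent solution. The clean exponent dichotomy used above for $-\tfrac12<\nu\le 0$ cannot rule out the bounded solution in this edge case; it is precisely the explicit antiderivative $\log\dc$ of the first-derivative coefficient that rescues the argument.
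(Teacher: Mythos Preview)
Your proof is correct. For $\nu>0$, and for the monotonicity part of the case $-\tfrac12\le\nu\le 0$, you proceed exactly as the paper does: via \eqref{4:modlame} and \eqref{4:transmodlame} respectively, with the positivity of $q$ and $Q$ coming from Lemma~\ref{4:l1}.

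The divergence argument for $-\tfrac12\le\nu\le 0$ is where you differ. The paper splits this range: for $-\tfrac12<\nu\le 0$ it reuses the Frobenius dichotomy at $t=K'$ (exponents $\pm(\nu+\tfrac12)$ of \eqref{4:transmodlame}), and treats $\nu=-\tfrac12$ separately by observing that the unique solution of \eqref{4:transmodlame} analytic at $K'$ with $w(K')=1$ satisfies $w'(K')=0$ and $w''(K')=\tfrac12 a_{-1/2}^n-\tfrac18 k^2+\tfrac38>0$, so this bounded solution is \emph{decreasing} just to the left of $K'$ and therefore cannot coincide with the increasing $w$. Your route---bounding $w'$ below via $(\log w')'\ge a$, then using the explicit antiderivatives $\int a=\log\dc$ and $\int\dc=\tfrac12\log\frac{1+\sn}{1-\sn}$---handles the whole interval $-\tfrac12\le\nu\le 0$ uniformly, with no case split and no appeal to Frobenius theory, at the cost of verifying those two Jacobi-elliptic integral identities. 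Both approaches are valid; yours is more self-contained and avoids the delicate double-exponent endpoint, while the paper's is shorter once one is prepared to treat the borderline case by a direct local computation.
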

\begin{proof}
If $\nu>0$ we consider differential equation \eqref{4:modlame} with $h=a_\nu^n(\kk)$.
By Lemma \ref{4:l1}, $a_\nu^n(\kk)+\nu(\nu+1)k^2\ssc^2(t,k')>0$ for $0<t<K'$. This implies that $|\Ec_\nu^n({\rm i}t,\kk)|$ is increasing. It diverges to infinity
because $t=K'$ is a regular singularity of this equation with exponents $-\nu$ and $\nu+1$.
If $-\frac12\le\nu<0$ we consider differential equation
\eqref{4:transmodlame} with $h=a_\nu^n(\kk)$. By Lemma \ref{4:l1},
\[
Q(t)\ge a_\nu^n(\kk)-\frac14k^2+\frac34k^2\ge \nu(\nu+1)k^2+\frac12k^2>0.
\]
It follows that the solutions of \eqref{4:transmodlame} determined by initial conditions
$w(0)=1$, $w'(0)=0$ or~$w(0)=0$, $w'(0)=1$ are positive and increasing on $(0,K')$. If $-\frac12<\nu<0$ we see as before
that $w(t)$ diverges to infinity as $t\to K'$. This is also true for $\nu=-\frac12$ by noting that the unique solution $w$
of \eqref{4:transmodlame} with $\nu=-\frac12$ which is analytic at $t=K'$ and satisfies $w(K')=1$ has $w'(K')=0$ and
$w''(K')=\frac12 a_{-1/2}^n(\kk)-\frac18k^2+\frac{3}{8}>0$.
 \end{proof}

\begin{lemma}\label{4:l7}
Let $0<b<K'$. Then there are constants $C>0$ and $p\in(0,1)$ independent of $n$ and $t$ such that
\[
0\le \frac{\Ec_{-\frac12}^n({\rm i}t,\kk)}{\Ec_{-\frac12}^n({\rm i}b,\kk)}\le C p^n\qquad\text{for}\quad n\in\N_0, \quad t\in[0,b].
\]
The same result is true for $\Es_\nu^{n+1}$ in place of $\Ec_\nu^n$.
\end{lemma}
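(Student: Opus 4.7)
I mirror the proof of Lemma~\ref{4:l5}, but the key complication is that for $\nu=-\tfrac12$ the factor $\nu(\nu+1)=-\tfrac14$ is negative, so applying Lemma~\ref{4:l4} directly to \eqref{4:modlame} requires $n$ to be large enough that $a_{-1/2}^n(\kk)$ dominates the negative perturbation. Fix $c\in(0,b)$ and set
\[ q(t):=a_{-1/2}^n(\kk)-\tfrac14 k^2\ssc^2(t,k'). \]
Define $W(t):=\Ec_{-1/2}^n(it,\kk)$ when $n$ is even and $W(t):=-i\,\Ec_{-1/2}^n(it,\kk)$ when $n$ is odd (sign of each chosen so that $W>0$ on $(0,K')$, which is possible by Lemma~\ref{4:l6}). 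Then $W$ is real-valued and satisfies $W''=qW$ with initial conditions matching the hypotheses of Lemma~\ref{4:l4}: $W(0)>0$, $W'(0)=0$ for even $n$; $W(0)=0$, $W'(0)>0$ for odd $n$ (these follow from the boundary conditions in Table~\ref{4:table2}).

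Since $\ssc^2(\cdot,k')$ is increasing on $[0,K')$, Lemma~\ref{4:l1} yields
\[ q(t)\;\ge\; a_{-1/2}^n(\kk)-\tfrac14 k^2\ssc^2(b,k')\;\ge\;\frac{\pi^2 n^2}{4K^2}-\tfrac14 k^2\bigl(1+\ssc^2(b,k')\bigr) \]
for $t\in[0,b]$, which is nonnegative once $n\ge N$ for a suitable $N=N(b,\kk)$. Taking $\lambda^2:=q(c)\ge \pi^2 n^2/(4K^2)-\tfrac14 k^2(1+\ssc^2(c,k'))\ge \gamma^2 n^2$ for some $\gamma>0$ and all $n\ge N$ (enlarging $N$ if needed), Lemma~\ref{4:l4} delivers
\[ 0\le \frac{W(t)}{W(b)}\le 2\expe^{-\lambda(b-c)}\le 2p_0^n,\qquad t\in[0,c], \]
where $p_0:=\expe^{-\gamma(b-c)}\in(0,1)$.

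For the finitely many remaining $n<N$, I will use Lemma~\ref{4:l6}: the monotonicity of $\dc^{1/2}(t,k')|\Ec_{-1/2}^n(it,\kk)|$ on $(0,K')$ gives, for $t\in[0,c]$,
\[ \frac{|\Ec_{-1/2}^n(it,\kk)|}{|\Ec_{-1/2}^n(ib,\kk)|}\;\le\; \frac{\cd^{1/2}(t,k')}{\cd^{1/2}(b,k')}\;\le\;\dc^{1/2}(b,k'), \]
a bound independent of $n$. Absorbing the small-$n$ contribution into a single constant $C$ and taking $p:=p_0$ then yields the desired uniform estimate; the case $\Es_{-1/2}^{n+1}$ is identical up to parity. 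The main obstacle will be ensuring that $q$ is \emph{pointwise} nonnegative on all of $[0,b]$ for $n\ge N$ (rather than merely positive at the single point $t=c$), which rests on combining the quantitative lower bound for $a_{-1/2}^n(\kk)$ from Lemma~\ref{4:l1} with the boundedness of $\ssc^2(\cdot,k')$ on the closed interval $[0,b]\subset[0,K')$.
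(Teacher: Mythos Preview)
Your approach is essentially the paper's: apply Lemma~\ref{4:l4} to $W''=qW$ with $q(t)=a_{-1/2}^n(\kk)-\tfrac14 k^2\ssc^2(t,k')$, use Lemma~\ref{4:l1} to force $q\ge 0$ on $[0,b]$ once $n$ is large, deduce exponential decay in $n$ from Lemma~\ref{4:l4}, and absorb the finitely many small $n$ via Lemma~\ref{4:l6}. Your explicit treatment of the small-$n$ range through the monotonicity of $\dc^{1/2}(t,k')\,|\Ec_{-1/2}^n(it,\kk)|$ is simply a concrete version of the paper's one-line appeal to Lemma~\ref{4:l6}.

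There is one slip. You set $\lambda^2=q(c)$, presumably by analogy with the proof of Lemma~\ref{4:l5}; but there $\nu>0$ made $q$ increasing, whereas here $\nu(\nu+1)=-\tfrac14<0$ makes $q$ \emph{decreasing} on $[0,b]$. Hence $q(c)=\max_{[c,b]}q$, not the minimum, and the hypothesis $q(t)\ge\lambda^2$ on $[c,b]$ required by Lemma~\ref{4:l4} fails. The fix is immediate: take for $\lambda^2$ the uniform lower bound you already established on all of $[0,b]$, which the paper writes as $q(t)\ge\bigl(\tfrac{\pi}{2K}n-B\bigr)^2$; this then serves on $[c,b]$ for any fixed $c\in[0,b)$ and still yields $\lambda\ge\gamma n$ for large $n$.
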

\begin{proof}
We apply Lemma \ref{4:l4} with $q(t)=a_\nu^n(\kk)-\frac14k^2\ssc^2(t,k')$.
By Lemma \ref{4:l1},
\[
q(t)\ge -\frac14 k^2+\frac{\pi^2n^2}{4K^2}-\frac14 k^2\ssc^2(b,k')\qquad \text{for}\quad 0\le t\le b.
\]
If $n$ is sufficiently large then
\[
q(t)\ge \bigg(\frac{\pi}{2K} n- B\bigg)^2\qquad \text{for}\quad 0\le t\le b,
\]
where $B$ is a positive constant independent of $n$ and $t$.
Then Lemma \ref{4:l4} implies the desired estimate for large $n$.
Using Lemma \ref{4:l6}, we see that by choosing $C$ large enough the estimate is true for all $n\in\N_0$.
\end{proof}

The Lam\'e equation \eqref{4:Lame} has regular singularities at $s=\pm {\rm i}K'$ with exponents $\nu+1$, $-\nu$.
The Lam\'e function of the second kind $\Fc_\nu^n(s,\kk)$, $|\operatorname{Im} s|<K'$, is defined as a solution
of \eqref{4:Lame} with $h=a_\nu^n(\kk)$ belonging to the exponent $\nu+1$, that is,
\begin{equation}\label{4:F}
 \Fc_\nu^n(s,\kk)= \sum_{p=0}^\infty d_p (s-{\rm i}K')^{2p+\nu+1},\qquad d_0\ne 0,
\end{equation}
for $s$ close to ${\rm i}K'$.
Similarly, $\Fs_\nu^{n+1}$ is the
solution of \eqref{4:Lame} with $h=b_\nu^{n+1}$
which belongs to the exponent $\nu+1$ at $s={\rm i}K'$.
It follows from Lemma \ref{4:l6} that $\Ec_\nu^n$, $\Fc_\nu^n$ and $\Es_\nu^n$, $\Fs_\nu^n$ are linearly independent.
Therefore, we can normalize the Lam\'e functions of the second kind such that
\begin{equation}\label{4:wronskian}
{\sf F}({\rm i}t)\frac{{\mathrm d}}{{\mathrm d}t} {\sf E}({\rm i}t)-{\sf E}({\rm i}t)\frac{{\mathrm d}}{{\mathrm d}t} {\sf F}({\rm i}t)=1,
\end{equation}
where ${\sf E}=\Ec$, ${\sf F}=\Fc$ or ${\sf E}=\Es$, ${\sf F}=\Fs$.

\section{Flat-ring harmonics}\label{sec5}
\subsection{Internal flat-ring harmonics}\label{sec5.1}

We use the first variant of flat-ring coordinates $s$, $t$, $\phi$
with $s\in(-2K,2K)$, $t\in(0,K')$
and $\phi\in(-\pi,\pi)$. If $t_0\in(0, K')$ is a fixed value, then the coordinate surface \eqref{2:coordsurface2}
describes a closed surface, a flat-ring as depicted in Figure \ref{flatring3d}.
Also, in Figure \ref{flatring3d2} we present
a three-dimensional visualization of the flat-rings and their orthogonal surfaces
which we refer to as peanuts.

\begin{figure}[ht]
\centering
 \includegraphics[scale=.9]{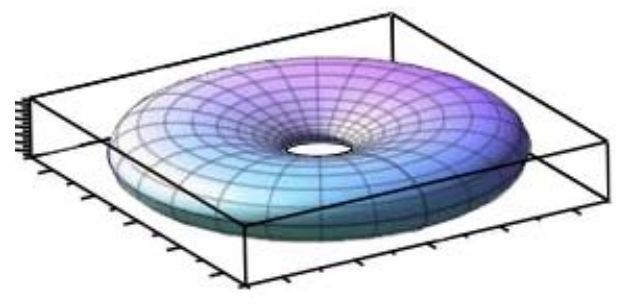}
 \put(-24,29){\makebox(0,0)[lb]{$-2$}}
 \put(-54,22){\makebox(0,0)[lb]{$-1$}}
 \put(-80,15){\makebox(0,0)[lb]{$0$}}
 \put(-110,8){\makebox(0,0)[lb]{$1$}}
 \put(-142,1){\makebox(0,0)[lb]{$2$}}
 \put(-190,2){\makebox(0,0)[lb]{$-2$}}
 \put(-208,13){\makebox(0,0)[lb]{$-1$}}
 \put(-220,26){\makebox(0,0)[lb]{$0$}}
 \put(-240,39){\makebox(0,0)[lb]{$1$}}
 \put(-258,50){\makebox(0,0)[lb]{$1$}}
 \put(-286,63){\makebox(0,0)[lb]{$-0.3$}}
 \put(-277,85){\makebox(0,0)[lb]{$0.3$}}
 \caption{Flat-ring, {$k=0.7$, $t=0.2 K'$}.\label{flatring3d}}
\end{figure}

\begin{figure}[hb]
\centering
 \includegraphics[clip=true,trim={1.5cm 0.5cm 1.5cm 0.5cm},width=12cm]{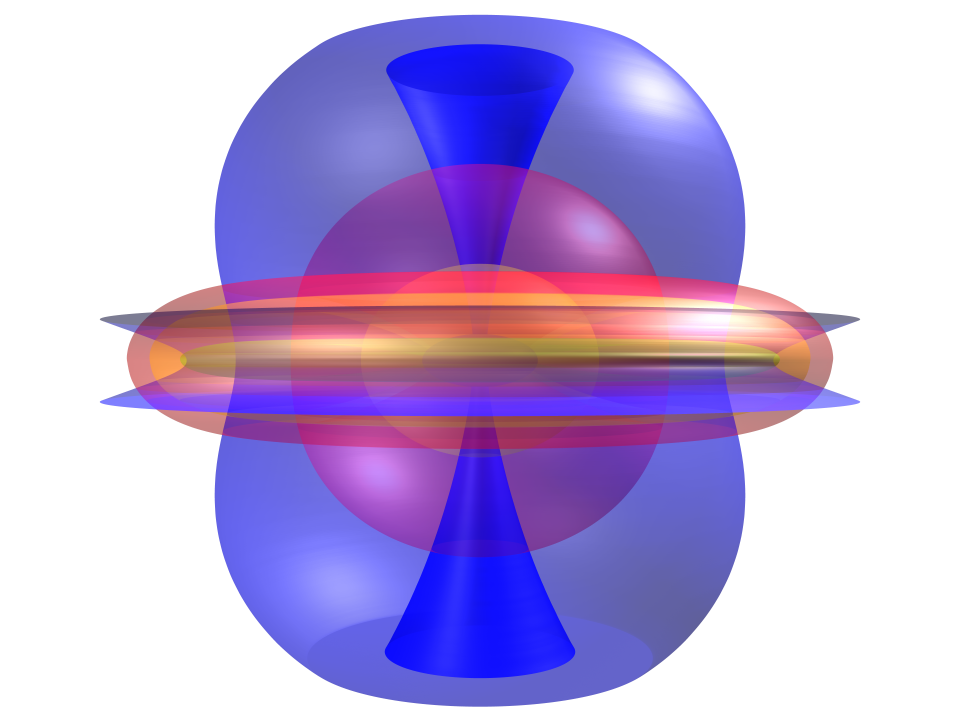}
 \caption{{For $k=0.7$ this figure depicts a three-dimensional visualization of rotationally-invariant flat-ring cyclides for $t\in\{\pm 0.10K',\pm 0.28K',\pm 0.35K',\pm 0.90K'\}$ and orthogonal peanut shaped cyclides $s\in\{0.369K,0.923K,1.26K,1.48K,1.75K\}$}. {Note that the ring cyclides are only partially shown for $t=\pm 0.1K'$ and the peanut cyclide is only partially shown for $s=1.75K$ (since these would extend beyond the figure).} \label{flatring3d2}}
\end{figure}

The interior of this flat-ring is given by
\begin{equation}\label{5:flatring}
 D_1=\bigg\{ \r\in\R^3\colon \frac{k^2(\|\r\|^2+1)^2}{\dn^2({\rm i}t_0,k)} - \frac{(\|\r\|^2-1)^2}{\cn^2({\rm i}t_0,k)}+ \frac{4z^2}{\sn^2({\rm i}t_0,k)} > 0\bigg\},
\end{equation}
where $\r=(x,y,z)$, $\|\r\|=\big(x^2+y^2+z^2\big)^{1/2}$.
Internal flat-ring harmonics are harmonic functions of the separated form~\eqref{3:sepsol} which are harmonic in the region $D_1$ for every $t_0\in(0,K')$. Therefore, internal flat-ring harmonics have to be harmonic
on all of $\R^3$ except the $z$-axis.

Suppose that $u_1(\zeta)$ and $u_2(\zeta)$ are solutions of the Lam\'e equation \eqref{3:Lame1} for $|\operatorname{Im}\zeta|<K'$,
and $u_3$ is a solution of \eqref{3:fourier}.
By Theorem \ref{3:t1}, $u(x, y, z)$ defined by \eqref{3:sepsol} is a harmonic function in $\tilde Q_1$ as defined in \eqref{3:tildeQ1}. We want this function to be harmonic on $\R^3$ except the $z$-axis. Clearly, we need $m\in\Z$, and we choose $u_3(\phi)=\expe^{{\rm i}m\phi}$ (alternatively, we could use $\cos(m\phi)$, $m=0,1,2,\dots$ and $\sin(m\phi)$, $m=1,2,3,\dots$). Then we have to require that the function $u_1(s)u_2({\rm i}t)$ is analytic in the right-hand half plane $x>0$, $z\in \R$. We know that $u_1(s)u_2({\rm i}t)$ is always analytic in the quadrant $x>0$, $z>0$. When we analytically extend this function
to the quadrant $x>0, z<0$ across the segment $(x,0)$, $0<x<b$, by using the first variant of flat-ring coordinates
then this extension has the value $u_1(-s)u_2({\rm i}t)$ at the point $(x,-z)$ when $(x,z)$ has coordinates
$s$, $t$. When we extend across the segment $(x,0)$, $b^{-1}<x$, using the third variant we obtain $u_1(4K-s)u_2({\rm i}t)$.
We want these extensions to be the same, so we need $u_1$ to be
periodic with period $4K$ (provided $u_2$ is not identically zero).
Therefore, we take $u_1=\Ec_{|m|-\frac12}^n$ or $u_1=\Es_{|m|-\frac12}^{n+1}$. When we analytically extend across the segment
$(x,0)$, $b<x<b^{-1}$, using the second variant of flat-ring coordinates we obtain $u_1(s)u_2(-{\rm i}t)$.
This extension should also be the same as the previous two, so we have to require that $u_1$ and $u_2$ have the same parity.
Therefore, $u_1$ has to be a constant multiple of~$u_2$. We can take $u_1=u_2$.
Thus we are led to define internal flat-ring harmonics by
\begin{gather}
\Gc^n_m(x,y,z)=\big(x^2+y^2\big)^{-1/4} \Ec^n_{|m|-\frac12}(s,\kk)\Ec^n_{|m|-\frac12}({\rm i}t,\kk)\expe^{{\rm i}m\phi}, \label{5:int1}
\\
\Gs^{n+1}_m(x,y,z)=\big(x^2+y^2\big)^{-1/4} \Es^{n+1}_{|m|-\frac12}(s,\kk)\Es^{\,n+1}_{|m|-\frac12}({\rm i}t,\kk)\expe^{{\rm i}m\phi}, \label{5:int2}
\end{gather}
where $m\in\Z$, $n\in\N_0$.

We collect some properties of internal harmonics in the following theorem.
We will use the Kelvin transformation \cite[Chapter~IX.2]{Kellogg}. If $u(\r)$ is a harmonic function then
its Kelvin transformation
\[
\tilde u(\r)=\|\r\|^{-1}u(\sigma(\r))
\]
is also harmonic, where
$\sigma$ denotes the inversion at the unit sphere
\[
\sigma(\r)=\|\r\|^{-2}\r .
\]

\begin{thm}\label{5:int}
The internal flat-ring harmonics $\Gc_m^n$ and $\Gs_m^{n+1}$ are harmonic functions defined in all of $\R^3$ except for the $z$-axis.
They have the following symmetry properties
\begin{gather*}
\Gc^n_m(\sigma(\r))=\|\r\|\Gc^n_m(\r),
\\
\Gs^{n+1}_m(\sigma(\r))=-\|\r\|\Gs^{n+1}_m(\r),
\\
\Gc^n_m(x,y,-z)=(-1)^n\Gc^n_m(x,y,z),
\\
\Gs^{n+1}_m(x,y,-z)=(-1)^n \Gs^{n+1}_m(x,y,z).
\end{gather*}
\end{thm}
\begin{proof}
From our discussion at the beginning of this section we know that $u=\Gc$ and $u=\Gs$ are harmonic functions on $\R^3$ except for the $z$-axis and the two circles centered at the origin with radii $b^{-1}$ and $b$ in the $xy$-plane, respectively. The function $u$ is bounded in a neighborhood of the two circles. Therefore, the two circles are removable singularities of $u$ (see \cite[Theo\-rem~XIII, p.~271]{Kellogg}). 
Hence, $u$ is harmonic on $\R^3$ except for along the $z$-axis.

The inversion $\sigma$ in the upper half-space $z>0$ is expressed in flat-ring coordinates by the map $s\mapsto 2K-s$.
The Lam\'e functions $\Ec$ remain unchanged under this reflection while the Lam\'e functions $\Es$ change sign.
Similarly, the reflection $z\mapsto -z$ is expressed by the map $s\mapsto-s$.
The Lam\'e functions $\Ec^n_\nu$, $\Es^{n+1}_\nu$ remain unchanged under this reflection if $n$ is even but change sign when $n$ is odd.
\end{proof}

\subsection{The Dirichlet problem}
Theorem \ref{4:t1} implies the following theorem.
\begin{thm}\label{5:basis}
The system of functions
\[
(8\pi)^{-1/2} \Ec^{\,n}_{|m|-\frac12}(s,\kk)\expe^{{\rm i}m\phi},\qquad
(8\pi)^{-1/2} \Es^{\,n+1}_{|m|-\frac12}(s,\kk)\expe^{{\rm i}m\phi},\qquad m\in\Z,\quad n\in\N_0,
\]
is an orthonormal basis in the Hilbert space
\[
H_1=L^2((-2K,2K)\times (-\pi,\pi)).
\]
\end{thm}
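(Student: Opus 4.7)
The proof is essentially an application of the tensor-product structure of $L^2$ on a product space, combined with Theorem \ref{4:t1} in the $s$-variable and the classical Fourier basis in the $\phi$-variable.

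The plan is as follows. First, I would invoke the canonical unitary isomorphism
\[
L^2\bigl((-2K,2K)\times(-\pi,\pi)\bigr)\;\cong\;L^2(-2K,2K)\,\widehat{\otimes}\,L^2(-\pi,\pi),
\]
under which simple tensors $f(s)g(\phi)$ correspond to products $f(s)g(\phi)$. It is a standard fact that if $\{e_i\}$ is an orthonormal basis of a Hilbert space $\ch_1$ and $\{f_j\}$ is an orthonormal basis of $\ch_2$, then $\{e_i\otimes f_j\}$ is an orthonormal basis of $\ch_1\,\widehat{\otimes}\,\ch_2$. So it suffices to exhibit an orthonormal basis in each factor whose tensor product gives the stated family.

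Next, for the $\phi$-factor I would use that
\[
\Bigl\{(2\pi)^{-1/2}\,\expe^{im\phi}\Bigr\}_{m\in\Z}
\]
is an orthonormal basis of $L^2(-\pi,\pi)$ (classical Fourier series). For the $s$-factor, I would fix $m\in\Z$, set $\nu=|m|-\tfrac12\ge-\tfrac12$, and apply Theorem \ref{4:t1} with this value of $\nu$ and with $k$ the fixed modulus attached to the coordinate system. Theorem \ref{4:t1} states that the combined collection
\[
\Bigl\{\tfrac12\Ec_{\nu}^{n}(\,\cdot\,,k),\;\tfrac12\Es_{\nu}^{n+1}(\,\cdot\,,k)\Bigr\}_{n\in\N_0}
\]
is an orthonormal basis of $L^2(-2K,2K)$.

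Taking the tensor product bases and combining the normalization constants $\tfrac12\cdot(2\pi)^{-1/2}=(8\pi)^{-1/2}$ yields exactly the family in the theorem, and its completeness in $H_1$ follows from the tensor-product fact above. The only mild point to check is that Theorem \ref{4:t1} really is available for every $m\in\Z$: this is immediate because $\nu=|m|-\tfrac12$ always satisfies the hypothesis $\nu\ge-\tfrac12$ used in Section \ref{sec4}. There is no substantive obstacle; the content of the result is the ONB statement of Theorem \ref{4:t1}, which has already been established.
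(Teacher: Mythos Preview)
Your argument is correct and is precisely the approach the paper takes: the paper simply records that Theorem~\ref{5:basis} ``follows from Theorem~\ref{4:t1}'', and your tensor-product elaboration (Fourier basis in $\phi$, Theorem~\ref{4:t1} in $s$ with $\nu=|m|-\tfrac12$, and the constant $\tfrac12\cdot(2\pi)^{-1/2}=(8\pi)^{-1/2}$) is exactly what that one-line justification unpacks to.
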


We use internal flat-ring harmonics to solve the following Dirichlet problem.
For a fixed $t_0\in(0,K')$ consider the region $D_1$ defined in \eqref{5:flatring}.
For a given function $f$ defined on the boundary $\partial D_1$ of $D_1$ we find the function $u$
which is harmonic on $D_1$ and attains the boundary values $f$ on $\partial D_1$ in the weak sense.
The latter means that $\big(x^2+y^2\big)^{1/4}u$ (expressed in terms of flat-ring coordinates $s$, $t$, $\phi$) evaluated at $t_1\in(0,t_0)$ converges to $\big(x^2+y^2\big)^{1/4}f$ in the Hilbert space $H_1$ as $t_1\to t_0$.
Since $D_1$ has a positive distance to the $z$-axis the factor $\big(x^2+y^2\big)^{1/4}$ can be omitted in this definition.

We note that the solution of this Dirichlet problem is unique ({\rm i}f it exists).
To see this, assume that $u$ is a harmonic function in $D_1$ that attains the boundary values $0$ in the weak sense.
Let $g(s,t,\phi)$ be the function obtained from $\big(x^2+y^2\big)^{1/4}u(x,y,z)$ by expressing $x$, $y$, $z$
in flat-ring coordinates~\eqref{2:transc}.
This function is analytic for $s,\phi\in\R$ and $-t_0<t<t_0$.
For $t\in(0,t_0)$, $m\in\Z$, $n\in\N_0$ define
\[ v({\rm i}t)=\int_{-\pi}^\pi\int_{-2K}^{2K} g(s,t,\phi)\Ec_{|m|-\frac12}^n(s,\kk) \expe^{-{\rm i}m\phi}\,{\mathrm d}s\,{\mathrm d}\phi.\]
By differentiation under the integral sign, integration by parts and using \eqref{3:pde2} we find that $v$ satisfies
Lam\'e's equation \eqref{4:Lame} with $h=a^n_{|m|-\frac12}(\kk)$.
Since $g(s,t,\phi)=g(-s,-t,\phi)$, $v$ is even if $E$ is even and odd if $E$ is odd.
Therefore, $v$ is a constant multiple of~$E$.
By assumption, $v({\rm i}t)\to 0$ as $t\to t_0$
but this is impossible by Lemma~\ref{4:l6} unless $v({\rm i}t)$ is identically zero.
Using the same argument with $\Es$ in place of $\Ec$ we conclude that $u$ is identically zero by Theorem~\ref{5:basis}.

\begin{thm}\label{5:Dirichlet}
Let $f$ be a function defined on the boundary $\partial D_1$ of the region $D_1$ for some $t_0\in(0,K')$.
Suppose that $f$ is represented in the first variant of flat-ring coordinates as
\[
\big(x^2+y^2\big)^{1/4}f(x,y,z)=g(s,\phi) ,\qquad s\in(-2K,2K),\quad \phi\in(-\pi,\pi),
\]
such that $g\in H_1$.
For all $m\in\Z$ and $n\in\N_0$, define
\begin{gather*}
 c_m^n:=\frac{1}{8\pi\Ec^n_{|m|-\frac12}({\rm i}t_0,\kk)}\int_{-\pi}^\pi \int_{-2K}^{2K} g(s,\phi)\Ec^n_{|m|-\frac12}(s,\kk)\expe^{-{\rm i}m\phi}\,{\mathrm d}s\,{\mathrm d}\phi
\\ \hphantom{ c_m^n\,}
{} = \frac{1}{8\pi\bigl\{\Ec^n_{|m|-\frac12}({\rm i}t_0,\kk)\bigr\}^2}\int_{\partial D_1} \frac{1}{h_s(\r)} f(\r) \Gc_{-m}^n(\r)\,{\mathrm d}{S}(\r),
\end{gather*}
and
\begin{gather*}
 d_m^{n+1}:=\frac{1}{8\pi\Es^{\,n+1}_{|m|-\frac12}({\rm i}t_0,\kk)} \int_{-\pi}^\pi \int_{-2K}^{2K} g(s,\phi)\Es^{\,n+1}_{|m|-\frac12}(s,\kk)\expe^{-{\rm i}m\phi}{\mathrm d}s\,{\mathrm d}\phi
 \\ \hphantom{d_m^{n+1}\,}
{} = \frac{1}{8\pi\bigl\{\Ec^{\,n+1}_{|m|-\frac12}({\rm i}t_0,\kk)\bigr\}^2}\int_{\partial D_1} \frac{1}{h_s(\r)} f(\r) \Gs_{-m}^{n+1}(\r){\mathrm d}S(\r),
\end{gather*}
where $h_s$ is the metric coefficient \eqref{2:metric}. Then the function
\begin{equation}\label{5:sol}
 u(\r)=\sum_{m\in\Z}\sum_{n=0}^\infty \left(c_m^n\Gc_m^n(\r)+d_m^{n+1}\Gs_m^{n+1}(\r)\right)
\end{equation}
is harmonic in $D_1$ and it attains the boundary values $f$ on $\partial D_1$ in the weak sense.
The infinite series in \eqref{5:sol} converges absolutely and uniformly in compact subsets of $D_1$.
\end{thm}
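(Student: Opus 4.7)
\textbf{Proof plan for Theorem \ref{5:Dirichlet}.} The strategy is to recognize $c_m^n\,\Ec^n_{|m|-\frac12}(it_0,\kk)$ and $d_m^{n+1}\,\Es^{n+1}_{|m|-\frac12}(it_0,\kk)$ as the Fourier coefficients of $g$ with respect to the orthonormal basis of Theorem \ref{5:basis}, then exploit the rapid decay of $\Ec^n_{|m|-\frac12}(it,\kk)/\Ec^n_{|m|-\frac12}(it_0,\kk)$ for $t<t_0$ furnished by Lemmas \ref{4:l5}--\ref{4:l7}.

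First, I would verify that the two expressions for the coefficients agree. On $\partial D_1$ the surface element is $\mathrm{d}S=h_sh_\phi\,\mathrm{d}s\,\mathrm{d}\phi$ with $h_\phi=(x^2+y^2)^{1/2}$, so $h_s^{-1}\,\mathrm{d}S=(x^2+y^2)^{1/2}\,\mathrm{d}s\,\mathrm{d}\phi$. Substituting the expression \eqref{5:int1} for $\Gc^n_{-m}$ on the surface $t=t_0$, together with $f(x^2+y^2)^{1/4}=g$, turns the surface integral into $\Ec^n_{|m|-\frac12}(it_0,\kk)\int g\,\Ec^n_{|m|-\frac12}(s,\kk)\expe^{-im\phi}\,\mathrm{d}s\,\mathrm{d}\phi$, matching the first formula (analogously for $d_m^{n+1}$). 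By Theorem \ref{5:basis} and Parseval's identity,
\begin{equation*}
g(s,\phi)=\sum_{m,n}c_m^n\Ec^n_{|m|-\frac12}(it_0,\kk)\Ec^n_{|m|-\frac12}(s,\kk)\expe^{im\phi}+\sum_{m,n}d_m^{n+1}\Es^{n+1}_{|m|-\frac12}(it_0,\kk)\Es^{n+1}_{|m|-\frac12}(s,\kk)\expe^{im\phi},
\end{equation*}
with $8\pi\sum|c_m^n|^2|\Ec^n_{|m|-\frac12}(it_0,\kk)|^2+8\pi\sum|d_m^{n+1}|^2|\Es^{n+1}_{|m|-\frac12}(it_0,\kk)|^2=\|g\|_{H_1}^2$. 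In particular $|c_m^n\Ec^n_{|m|-\frac12}(it_0,\kk)|$ and $|d_m^{n+1}\Es^{n+1}_{|m|-\frac12}(it_0,\kk)|$ are bounded uniformly in $m,n$ by a constant times $\|g\|_{H_1}$.

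Next I would prove local uniform convergence of \eqref{5:sol} on compact subsets of $D_1$. Any such compact set is bounded away from the $z$-axis and, in flat-ring coordinates, lies in $\{t\le t_1\}$ for some $t_1<t_0$. Write
\begin{equation*}
c_m^n\Gc_m^n(\r)=(x^2+y^2)^{-1/4}\bigl(c_m^n\Ec^n_{|m|-\frac12}(it_0,\kk)\bigr)\,\Ec^n_{|m|-\frac12}(s,\kk)\,\frac{\Ec^n_{|m|-\frac12}(it,\kk)}{\Ec^n_{|m|-\frac12}(it_0,\kk)}\,\expe^{im\phi}.
\end{equation*}
Lemma \ref{4:l3} gives $|\Ec^n_{|m|-\frac12}(s,\kk)|\le C(|m|+1)^{1/2}$ uniformly in $s,n$. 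For $|m|\ge1$ Lemma \ref{4:l5} (with $c=t_1$, $b$ slightly smaller than $t_0$) and for $m=0$ Lemma \ref{4:l7} bound the ratio $|\Ec^n_{|m|-\frac12}(it,\kk)/\Ec^n_{|m|-\frac12}(it_0,\kk)|$ by $2p^{n+|m|-\frac12}$ or $Cp^n$ respectively, with a common $p\in(0,1)$. Together with the bound on $|c_m^n\Ec^n_{|m|-\frac12}(it_0,\kk)|$ this produces a majorant $\sum_{m,n}C'(|m|+1)^{1/2}p^{n+|m|-\frac12}$, which converges. The same estimates apply to the $\Gs$-series. Hence the series converges absolutely and uniformly on the compact set, and its sum $u$ is harmonic in $D_1$ because each term is harmonic off the $z$-axis (Theorem \ref{5:int}) and $D_1$ is bounded away from that axis.

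Finally, to verify that $u$ attains $f$ weakly, evaluate $(x^2+y^2)^{1/4}u$ on the flat-ring $t=t_1$ to get the $H_1$-function whose Fourier coefficients relative to the basis of Theorem \ref{5:basis} are $(8\pi)^{1/2}c_m^n\Ec^n_{|m|-\frac12}(it_1,\kk)$ and $(8\pi)^{1/2}d_m^{n+1}\Es^{n+1}_{|m|-\frac12}(it_1,\kk)$. Parseval then gives
\begin{equation*}
\bigl\|(x^2+y^2)^{1/4}u|_{t=t_1}-g\bigr\|_{H_1}^2=8\pi\sum|c_m^n|^2\bigl|\Ec^n_{|m|-\frac12}(it_1,\kk)-\Ec^n_{|m|-\frac12}(it_0,\kk)\bigr|^2+(\Es\text{-terms}).
\end{equation*}
Each summand tends to $0$ as $t_1\to t_0^-$, and by Lemma \ref{4:l6} the summands are dominated by $4|c_m^n|^2|\Ec^n_{|m|-\frac12}(it_0,\kk)|^2$ (and its $\Es$-analogue), whose total is finite by Parseval. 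Dominated convergence yields convergence of the $H_1$-norm to $0$, which is the weak attainment of the boundary values. The subtle step is the convergence argument in Step 3: one must treat the case $\nu=-\tfrac12$ (i.e.\ $m=0$) separately via Lemma \ref{4:l7} rather than Lemma \ref{4:l5}, and balance the mild growth $(|m|+1)^{1/2}$ from Lemma \ref{4:l3} against the geometric decay $p^{|m|-\frac12}$.
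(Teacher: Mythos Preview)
Your proposal is correct and follows essentially the same route as the paper: verify the two coefficient formulas agree via $\mathrm{d}S=h_sh_\phi\,\mathrm{d}s\,\mathrm{d}\phi$, use Lemmas \ref{4:l3}, \ref{4:l5}, \ref{4:l7} to get a geometric majorant $C(1+|m|)^{1/2}p^{|m|+n}$ for uniform convergence (treating $m=0$ separately), and conclude weak boundary attainment by Parseval plus dominated convergence. If anything you are slightly more explicit than the paper in two places: you bound $|c_m^n\Ec^n_{|m|-\frac12}(it_0,k)|$ via Bessel's inequality rather than asserting boundedness of $\{c_m^n\}$ directly, and you invoke Lemma \ref{4:l6} to justify the dominating sequence in the last step; also note that in Lemma \ref{4:l5} you may simply take $b=t_0$ rather than ``slightly smaller''.
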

\begin{proof}
Using flat-ring coordinates, we can write surface integrals over $\partial{D_1}$ as double integrals:
\[
 \int_{\partial D_1} f(\r)\,{\mathrm d}{S}(\r)= \int_{-\pi}^\pi\int_{-2K}^{2K} h_s h_\phi f\,{\mathrm d}s\,{\mathrm d}\phi
\]
with the metric coefficient {$h_s$ from \eqref{2:metric} and} $h_\phi=\big(x^2+y^2\big)^{1/2}$.
When we replace $f$ by $f \Gc/h_s$ or $f\Gs/h_s$ this shows that the two formulas given for $c_m^n$ and $d_m^{n+1}$ agree.

Let $s\in(-2K,2K)$, $\phi\in(-\pi,\pi)$ and $0<t\le t_1<t_0$. Using Lemmas~\ref{4:l3}, \ref{4:l5} and~\ref{4:l7}
we estimate
\[
\left|\frac{\Ec_{|m|-\frac12}^n({\rm i}t,\kk)}{\Ec_{|m|-\frac12}^n({\rm i}t_0,\kk)}
\Ec_{|m|-\frac12}^n(s,\kk)\expe^{{\rm i}m\phi}\right|\le
C p^{|m|+n} (1+|m|)^{1/2},
\]
where the constants $C$ and $p\in(0,1)$ are independent
of $m$, $n$, $s$, $t$, $\phi$.
We have a similar estimate with $\Es$ in place of $\Ec$. Since $\{c_m^n\}$ and $\{d_m^n\}$ are bounded double sequences, this proves that the
series in \eqref{5:sol} is absolutely and uniformly convergent
on compact subsets of $D_1$. Consequently, by Theorem \ref{5:int}, $u$ defined by \eqref{5:sol} is harmonic
in $D_1$.

Let $\tilde u(t)$ be the function $(x^2+y^2)^{1/4}u$ for a given $t\in(0,t_0)$ considered as a function of $(s,\phi)$.
Computing the norm $\|\tilde u-g\|$ in the Hilbert space $H_1$ by the Parseval
equality, we obtain
\begin{gather*}
 \|\tilde u-g\|^2\le 8\pi\sum_{m\in\Z}\sum_{n=0}^\infty
\Bigg(|c_m^n|^2
\Bigg(1-\frac{\Ec_{|m|-\frac12}^n({\rm i}t,\kk)}{\Ec_{|m|-\frac12}^n({\rm i}t_0,\kk)}\Bigg)^2
+|d_m^{{n+1}}|^2
\Bigg(1-\frac{\Es_{|m|-\frac12}^{n+1}({\rm i}t,\kk)}{\Es_{|m|-\frac12}^{n+1}({\rm i}t_0,\kk)}\Bigg)^2
\Bigg).
\end{gather*}
It is easy to see that the right-hand side converges to $0$ as $t\to t_0$.
It follows that $u$ attains the boundary values $f$ in the weak sense.
\end{proof}

It is well-known \cite[Chapter~XI]{Kellogg} that the classical Dirichlet problem on the flat-ring domain~$D_1$ has a unique solution, that is, for a given
continuous function $f$ on $\partial D_1$ there is a~unique continuous function
$u$ defined on $\overline{D}_1$ which is harmonic in $D_1$ and agrees with $f$ on~$\partial D_1$.
Obviously, for continuous $f$, the classical solution of the Dirichlet problem agrees with the solution furnished by Theorem \ref{5:Dirichlet}.

\subsection{External flat-ring harmonics}
External flat-ring harmonics are harmonic functions $u$ of the form \eqref{3:sepsol} which are harmonic outside of all
flat-ring regions \eqref{5:flatring}. Therefore, they are harmonic on $\R^3$
except for the annulus $b^2\leq x^2+y^2 \leq b^{-2}$ in the $xy$-plane. It is clear that $m$ must be an integer and arguing as at the beginning of Section~\ref{sec5.1}, we see that $u_1$ must have period $4K$.
We note that, for fixed $s\in(0,2K)$,
\[
\lim_{t\to K'} (K'-t)T\sim \frac1{k'}(\dn(s,k)+\cn(s,k)).
\]
Since the function $u$ has to be harmonic along the $z$-axis, we require that $u_2$ is a solution of the Lam\'e equation~\eqref{3:Lame1} such that $\big(x^2+y^2\big)^{-1/4}u_2({\rm i}t)=T^{1/2} u_2({\rm i}t)$
stays bounded when~$t$ approaches~$K'$. Therefore, $u_2$ has to be one of the Lam\'e functions $\Fc$ and $\Fs$ defined in Section~\ref{sec4}. Thus we define external flat-ring harmonics by
\begin{gather}
\Hc^n_m(x,y,z)=\big(x^2+y^2\big)^{-1/4} \Ec^n_{|m|-\frac12}(s,\kk)\Fc^n_{|m|-\frac12}({\rm i}t,\kk)\expe^{{\rm i}m\phi}, \label{5:ext1}
\\
\Hs^{n+1}_m(x,y,z)=\big(x^2+y^2\big)^{-1/4} \Es^{\,n+1}_{|m|-\frac12}(s,\kk)\Fs^{n+1}_{|m|-\frac12}({\rm i}t,\kk)\expe^{{\rm i}m\phi}, \label{5:ext2}
\end{gather}
where $m\in\Z$, $n\in\N_0$. We recall that we use the first variant of the transcendental flat-ring coordinates.

\begin{thm}
The external flat-ring harmonics $\Hc_m^n$, $\Hs_m^{n+1}$ are harmonic functions defined in all of $\R^3$ except for the closed annulus in the $xy$-plane centered at the origin with inner radius $b=\frac{k'}{1+k}$ and outer radius $b^{-1}$.
Moreover,
\begin{gather}
\Hc^n_m(\sigma(\r))=\|\r\|\Hc^n_m(\r),\label{5:ext3}
\\
\Hs^{n+1}_m(\sigma(\r))=-\|\r\|\Hs^{n+1}_m(\r),\label{5:ext4}
\\
\Hc^n_m(x,y,-z)=(-1)^n\Hc^n_m(x,y,z),\label{5:ext5}
\\
\Hs^{n+1}_m(x,y,-z)=(-1)^n \Hs^{n+1}_m(x,y,z) \label{5:ext6},
\end{gather}
and
\begin{equation}\label{5:ext7}
 \lim_{||\r\|\to\infty} \Hc^n_m(\r)=\lim_{\|\r\|\to\infty} \Hs_m^n(\r)=0 .
 \end{equation}
\end{thm}

\begin{proof}
From our preceding discussion we know that the function $u=\Hc_m^n$ (or $u=\Hs_m^{n+1}$) is harmonic on $\R^3$ except for the $z$-axis and the annulus $b^2\leq x^2+y^2 \leq b^{-2}$ in the $xy$-plane.
It~follows from \eqref{4:F} that $u$ stays bounded when we approach the $z$-axis.
This shows that $u$ is harmonic
on all of $\R^3$ except for the annulus $b^2\leq x^2+y^2 \leq b^{-2}$ in the $xy$-plane.
The symmetry properties are shown as in the proof of Theorem \ref{5:int}. The limits \eqref{5:ext7} follow from
\eqref{5:ext3} and~\eqref{5:ext4} and the fact that external flat-ring harmonics are harmonic at the origin.
\end{proof}

We now show that external harmonics admit an integral representation in terms of internal harmonics.

\begin{thm}\label{5:intrep}
Let $t_0\in(0,K')$, $m\in\Z$, $n\in\N_0$, and let $\r^\ast$ be a point outside $\overline{D}_1$, where $D_1$ is given by \eqref{5:flatring}. Then
\begin{equation}\label{5:intrep1}
 \Hc_m^n(\r^\ast)=\frac{1}{4\pi\big\{\Ec_{|m|-\frac12}^n({\rm i}t_0,\kk)\big\}^2}\int_{\partial D_1} \frac{\Gc_m^n(\r)}{h_s(\r)\|\r-\r^\ast\|}\,\,{\mathrm d}S(\r) ,
\end{equation}
and
\begin{equation}\label{5:intrep2}
 \Hs_m^{n+1}(\r^\ast)=\frac{1}{4\pi\big\{\Es_{|m|-\frac12}^{n+1}({\rm i}t_0,\kk)\big\}^2}\int_{\partial D_1} \frac{\Gs_m^{n+1}(\r)}{h_s(\r)\|\r-\r^\ast\|}\,\,{\mathrm d}S(\r) .
\end{equation}
\end{thm}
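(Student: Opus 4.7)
The plan is to apply Green's third identity to $\Hc_m^n$ in the exterior $\Omega=\R^3\setminus\overline{D_1}$ and then exploit the common separated structure of $\Hc_m^n$ and $\Gc_m^n$ on $\partial D_1$, together with the Wronskian normalization \eqref{4:wronskian}, to collapse the resulting boundary representation to the claimed single-layer integral. By Theorem \ref{5:ext}, $\Hc_m^n$ is harmonic in $\Omega$ and decays like $\|\r\|^{-1}$ at infinity (combine \eqref{5:ext3} with \eqref{5:ext7}), so the sphere-at-infinity term vanishes and
\[
\Hc_m^n(\r^\ast)=\frac{1}{4\pi}\int_{\partial D_1}\left[\Hc_m^n(\r)\,\partial_n\frac{1}{\|\r-\r^\ast\|}-\frac{1}{\|\r-\r^\ast\|}\,\partial_n\Hc_m^n(\r)\right]{\mathrm d}S(\r),
\]
where $n$ is the outward unit normal to $D_1$ and, since $h_s=h_t$ by \eqref{2:metric}, $\partial_n=h_s^{-1}\partial_t$.

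On $\partial D_1$ the two harmonics share the factor $(x^2+y^2)^{-1/4}\Ec^n_{|m|-\frac12}(s,\kk)\expe^{im\phi}$, so $\Hc_m^n=(\Fc^n_{|m|-\frac12}(it_0,\kk)/\Ec^n_{|m|-\frac12}(it_0,\kk))\Gc_m^n$ there. Differentiating in $t$ and evaluating at $t=t_0$, the two contributions containing $\partial_t(x^2+y^2)^{-1/4}$ cancel from the combination $\Ec(it_0)\partial_t\Hc_m^n-\Fc(it_0)\partial_t\Gc_m^n$, and what remains is $i(x^2+y^2)^{-1/4}\Ec^n_{|m|-\frac12}(s,\kk)\expe^{im\phi}\bigl[\Ec(it_0)\Fc'(it_0)-\Fc(it_0)\Ec'(it_0)\bigr]$. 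Applying the chain rule to \eqref{4:wronskian} shows that the bracket equals $i$, so the whole combination reduces to $-\Gc_m^n(\r)/\Ec^n_{|m|-\frac12}(it_0,\kk)$. Dividing by $h_s$ yields the key boundary identity
\[
\partial_n\Hc_m^n=\frac{\Fc^n_{|m|-\frac12}(it_0,\kk)}{\Ec^n_{|m|-\frac12}(it_0,\kk)}\,\partial_n\Gc_m^n-\frac{\Gc_m^n(\r)}{h_s(\r)\bigl\{\Ec^n_{|m|-\frac12}(it_0,\kk)\bigr\}^2}.
\]

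Substituting this identity and $\Hc_m^n=(\Fc/\Ec)\Gc_m^n$ into the Green representation splits the boundary integral into two pieces: the first is exactly the right-hand side of \eqref{5:intrep1}, and the second is $(\Fc/\Ec)$ times the Green second identity pairing of $\Gc_m^n$ against $\|\r-\r^\ast\|^{-1}$ on $\partial D_1$. This pairing vanishes because both functions are harmonic throughout $D_1$: $\Gc_m^n$ is singular only on the $z$-axis, and the $z$-axis does not meet $\overline{D_1}$ (immediate from \eqref{5:flatring}), while $\|\r-\r^\ast\|^{-1}$ is harmonic in $\r$ since $\r^\ast\notin\overline{D_1}$. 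The corresponding identity \eqref{5:intrep2} for $\Hs_m^{n+1}$ follows line by line using the Wronskian for $\Es,\Fs$. The main obstacle I foresee is the careful product-rule computation that isolates the Wronskian: the modulation factor $(x^2+y^2)^{-1/4}$ also depends on $t$, and it is precisely the algebraic cancellation of those contributions in the combination $\Ec(it_0)\partial_t\Hc_m^n-\Fc(it_0)\partial_t\Gc_m^n$ that makes the Wronskian appear cleanly; the decay at infinity, Green's identities, and harmonicity of $\Gc_m^n$ on $\overline{D_1}$ are routine.
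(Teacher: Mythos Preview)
Your proof is correct and follows essentially the same approach as the paper: both apply Green's identities to $\Gc_m^n$ on $D_1$ and to $\Hc_m^n$ on the exterior (with the sphere-at-infinity term discarded via \eqref{5:ext3}), then form the linear combination $\Ec(it_0)\cdot(\text{exterior identity})-\Fc(it_0)\cdot(\text{interior identity})$ so that the $\partial_\nu\|\r-\r^\ast\|^{-1}$ terms cancel, leaving exactly the Wronskian combination that \eqref{4:wronskian} collapses to $\Gc_m^n/(h_t\Ec(it_0))$. The only cosmetic difference is that the paper takes the linear combination of two boundary identities whereas you substitute $\Hc=(\Fc/\Ec)\Gc$ and your key boundary identity into a single representation formula, which is algebraically the same manipulation.
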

\begin{proof}
Let $D$ be an open bounded subset of $\R^3$ with smooth boundary.
For $u,v\in C^2(\overline D)$, Green's formula states that
\begin{equation}\label{5:Green}
\int_D(u\Delta v-v\Delta u)\,{\rm d}\r = \int_{\partial D} \bigg(u\frac{\partial v}{\partial\nu}-v\frac{\partial u}{\partial\nu}\bigg)\,{\mathrm d}S,
\end{equation}
where $\frac{\partial u}{\partial\nu}$ is the outward normal derivative of $u$ on the boundary $\partial D$ of~$D$.
We apply \eqref{5:Green} to $D=D_1$, $u=G=\Gc_m^n$, and
\begin{equation}\label{5:v}
v(\r)=\frac{1}{4\pi\|\r^\ast-\r\|}.
\end{equation}
Since $\Delta u=\Delta v=0$ on $D_1$ we obtain
\begin{equation}\label{5:eq2}
0=\int_{\partial D_1} \left(G \frac{\partial v}{\partial\nu}-v \frac{\partial G}{\partial\nu}\right) \,{\mathrm d}S .
\end{equation}
Let $B_r(\q)$ denote the open ball centered at $\q\in\R^3$ with radius $r>0$.
We apply \eqref{5:Green} a second time with $D=B_R(\0) -\overline D_1-B_\epsilon(\r^\ast)$ with large $R$ and small $\epsilon>0$.
Choose $u=H=\Hc_m^n$ and~$v$ as in \eqref{5:v}. Note that $\Delta u=\Delta v=0$ on $D$.
By a standard argument \cite[Chapter~VII, Proof of Theorem 5.1]{ZachmanoglouThoe76}, taking the limit $\epsilon\to 0$, we obtain
\begin{equation}\label{5:eq3}
H(\r^\ast)=-\int_{\partial B_R(\0)} \bigg(H\frac{\partial v}{\partial\nu}-v \frac{\partial H}{\partial\nu}\bigg) {\mathrm d}S+
\int_{\partial D_1} \bigg(H\frac{\partial v}{\partial\nu}-v \frac{\partial H}{\partial\nu}\bigg){\mathrm d}S,
\end{equation}
where, in the second integral, $\frac{\partial}{\partial\nu}$ denotes again the derivative in the direction of the outward normal as in \eqref{5:eq2}.
As $r=\|\r||\to\infty$, we have
\[
v(\r)=O\big(r^{-1}\big),\qquad \frac{\partial v}{\partial \nu}=O\big(r^{-2}\big)
\]
and, from \eqref{5:ext3},
\[
H(\r)=O\big(r^{-1}\big),\qquad \frac{\partial H}{\partial \nu}=O\big(r^{-2}\big).
\]
Therefore, the first integral on the right-hand side of \eqref{5:eq3} tends to $0$ as $R\to\infty$, so we obtain
\begin{equation}\label{5:eq4}
H(\r^\ast)=\int_{\partial D_1}
\bigg(H\frac{\partial v}{\partial\nu}-v \frac{\partial H}{\partial\nu}\bigg) {\mathrm d}S.
\end{equation}
Set ${\sf E}(\zeta)=\Ec_{|m|-\frac12}^n(\zeta)$ and ${\sf F}(\zeta)=\Fc_{|m|-\frac12}^n(\zeta)$.
We multiply~\eqref{5:eq2} by $-{\sf F}({\rm i}t_0)$, then multiply~\eqref{5:eq4} by ${\sf E}({\rm i}t_0)$ and add these equations. Using the definitions \eqref{5:int1} and \eqref{5:ext1} of internal and external flat-ring harmonics and cancelling terms, we find
\begin{equation}\label{5:eq5}
{\sf E}({\rm i}t_0)H(\r^\ast)=\int_{\partial D_1} v\bigg({\sf F}({\rm i}t_0)\frac{\partial G}{\partial\nu}-{\sf E}({\rm i}t_0)\frac{\partial H}{\partial \nu}\bigg) {\mathrm d}S .
\end{equation}
Since flat-ring coordinates are orthogonal, the normal derivative and the partial derivative with respect to $t$ are related by
\[
\frac{\partial}{\partial \nu}= \frac{1}{h_t} \frac{\partial}{\partial t},
\]
where $h_t$ is given in \eqref{2:metric}.
Let $\r=(x,y,z)\in \partial D_1$ with flat-ring coordinates $s$, $t$, $\phi$.
Then
{\samepage\begin{gather*}
{\sf F}({\rm i}t_0)\frac{\partial G}{\partial\nu}(\r)-{\sf E}({\rm i}t_0)\frac{\partial H}{\partial \nu}(\r)
 \\ \qquad
{} = {\sf F}({\rm i}t_0)\frac{\partial \big(\big(x^2+y^2\big)^{-1/4}\big)}{\partial\nu}{\sf E}({\rm i}t_0){\sf E}(s)\expe^{{\rm i}m\phi}
+{\sf F}({\rm i}t_0)\big(x^2+y^2\big)^{-1/4} h_t^{-1}i{\sf E}'({\rm i}t_0){\sf E}(s)\expe^{{\rm i}m\phi}
\\ \qquad\hphantom{=}
 {}-{\sf E}({\rm i}t_0)\frac{\partial \big(\big(x^2+y^2\big)^{-1/4}\big)}{\partial\nu}{\sf F}({\rm i}t_0){\sf E}(s)\expe^{{\rm i}m\phi}
-{\sf E}({\rm i}t_0)\big(x^2+y^2\big)^{-1/4}h_t^{-1} i{\sf F}'({\rm i}t_0){\sf E}(s)\expe^{{\rm i}m\phi}
\\ \qquad
{}= h_t^{-1} \big(x^2+y^2\big)^{-1/4}i\{{\sf F}({\rm i}t_0){\sf E}'({\rm i}t_0)
-{\sf F}'({\rm i}t_0){\sf E}({\rm i}t_0)\} {\sf E}(s)\expe^{{\rm i}m\phi} .
\end{gather*}}

\noindent
We now use the Wronskian \eqref{4:wronskian} and obtain
\begin{equation}\label{5:eq6}
{\sf F}({\rm i}t_0)\frac{\partial G}{\partial\nu}(\r)-{\sf E}({\rm i}t_0)\frac{\partial H}{\partial \nu}(\r)
= \frac{1}{h_t(\r){\sf E}({\rm i}t_0)}G(\r).
\end{equation}
When we substitute \eqref{5:eq6} in \eqref{5:eq5} we obtain \eqref{5:intrep1}. The proof of \eqref{5:intrep2} is similar.
\end{proof}

\subsection{Expansion of the fundamental solution}
We obtain the expansion of \eqref{5:v} in internal and external flat-ring harmonics by combining Theorems \ref{5:Dirichlet} and \ref{5:intrep}.

\begin{thm}\label{5:main}
Let $\r,\r^\ast\in \R^3$ with flat-ring coordinates $t,t^\ast\in(0,K')$, respectively.
If $t< t^\ast$ then
\begin{equation}\label{5:expansion}
\frac{1}{\|\r-\r^\ast\|}=\frac12\sum_{m\in\Z}\sum_{n=0}^\infty \left(\Gc_m^n(\r)\Hc_{-m}^n(\r^\ast)+\Gs_m^{n+1}(\r)\Hs_{-m}^{n+1}(\r^\ast)\right).
\end{equation}
\end{thm}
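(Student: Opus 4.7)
The strategy is to apply Theorem \ref{5:Dirichlet} with the specific boundary data given by $f(\r) := \frac{1}{4\pi\|\r-\r^\ast\|}$, and then to identify the resulting Fourier-type coefficients with external flat-ring harmonics by using the integral representation furnished by Theorem \ref{5:intrep}.

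The first step is to set up the geometry. Given $\r,\r^\ast$ with flat-ring coordinates $t<t^\ast$ in $(0,K')$, choose an intermediate value $t_0 \in (t,t^\ast)$ and let $D_1$ be the flat-ring region \eqref{5:flatring}. Because $D_1$ is the component of $\R^3\setminus\{t=t_0\}$ whose points lie inside the flat-ring surface and away from the $z$-axis (i.e.\ those with $t$-coordinate less than $t_0$, as is consistent with the convergence estimates used in the proof of Theorem \ref{5:Dirichlet}), one has $\r\in D_1$ and $\r^\ast\notin\overline{D_1}$. Consequently $f$ is continuous on $\overline{D_1}$ and harmonic on $D_1$, so its boundary values on $\partial D_1$ satisfy the hypotheses of Theorem \ref{5:Dirichlet}.

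The second step is to apply Theorem \ref{5:Dirichlet} to $f$, obtaining a series $u(\r)=\sum_{m,n}\bigl(c_m^n\Gc_m^n(\r)+d_m^{n+1}\Gs_m^{n+1}(\r)\bigr)$ that represents the weak Dirichlet solution on $D_1$. Since $f$ itself is harmonic on $D_1$ and continuous on $\overline{D_1}$, the uniqueness statement for the classical Dirichlet problem mentioned immediately after Theorem \ref{5:Dirichlet} ensures $u=f$ throughout $D_1$; evaluating at $\r$ gives a convergent series for $f(\r)$.

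The third step is to compute the coefficients. Substituting $f=\frac{1}{4\pi\|\cdot-\r^\ast\|}$ into the surface-integral formula for $c_m^n$ from Theorem \ref{5:Dirichlet} gives an integral of exactly the type appearing in Theorem \ref{5:intrep}, but with $m$ replaced by $-m$ (note $|-m|=|m|$). That theorem then evaluates the integral as $4\pi\{\Ec^n_{|m|-\frac12}(it_0,k)\}^2\,\Hc_{-m}^n(\r^\ast)$, so after cancellation $c_m^n=\tfrac{1}{8\pi}\Hc_{-m}^n(\r^\ast)$. The identical argument with $\Es,\Fs,\Gs,\Hs$ gives $d_m^{n+1}=\tfrac{1}{8\pi}\Hs_{-m}^{n+1}(\r^\ast)$. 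Inserting these two identifications into the Dirichlet expansion of $f$ and multiplying through by $4\pi$ produces \eqref{5:expansion}.

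The only nontrivial verification is the geometric identification in the first step, namely that the strict inequality $t<t_0<t^\ast$ places $\r$ inside $D_1$ and $\r^\ast$ outside $\overline{D_1}$; once that is settled the argument is essentially a bookkeeping combination of Theorems \ref{5:Dirichlet} and \ref{5:intrep}, since the real analytic work (series convergence, the integration-by-parts identity producing the Wronskian factor, and the growth estimates for Lam\'e functions) has already been absorbed into those two theorems.
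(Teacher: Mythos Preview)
Your proposal is correct and follows essentially the same approach as the paper's own proof: choose $t_0$ with $t<t_0<t^\ast$, apply Theorem~\ref{5:Dirichlet} to the harmonic function $\|\cdot-\r^\ast\|^{-1}$ (the paper omits your extra factor $1/(4\pi)$, which is harmless), and identify the resulting coefficients $c_m^n$, $d_m^{n+1}$ with $\tfrac12\Hc_{-m}^n(\r^\ast)$, $\tfrac12\Hs_{-m}^{n+1}(\r^\ast)$ via Theorem~\ref{5:intrep}. Your added remarks about the geometric placement of $\r$ and $\r^\ast$ relative to $D_1$ and about uniqueness of the Dirichlet solution make explicit what the paper takes for granted.
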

\begin{proof}
We pick $t_0$ such that $t<t_0<t^\ast$, and consider the domain $D_1$ defined in \eqref{5:flatring}.
Therefore, by applying Theorem \ref{5:Dirichlet} to the harmonic function $f(\q):=\frac{1}{\|\q-\r^\ast\|}$, we find
\[
 f(\r)=\sum_{m\in\Z}\sum_{n=0}^\infty \big(c_m^n\Gc_m^n(\r)+d_m^{n+1}\Gs_m^{n+1}(\r)\big),
\]
where $c_m^n$ and $d_m^{n+1}$ can be evaluated by Theorem \ref{5:intrep}:
\[
c_n^m=\frac12 \Hc_{-m}^n(\r^\ast),\qquad d_m^{n+1}=\frac12 \Hs_{-m}^{n+1}(\r^\ast).
\]
We use $h_s=h_t$. Thus we obtain \eqref{5:expansion}.
\end{proof}

\subsection{Addition theorem for the azimuthal Fourier coefficients}

Then, the azimuthal Fourier expansion of
$\|{\bf r}-{\bf r}^\ast\|^{-1}$ is well-known
\cite[equation~(15)]{CT}
\begin{equation}
\frac{1}{\|\r-\r^\ast\|}
=\frac{1}{\pi\sqrt{RR^\ast}}
\sum_{m\in\Z} \expe^{{\rm i}m(\phi-\phi^\ast)}
Q_{m-\frac12}(\chi),
\label{oneoraz}
\end{equation}
where
\begin{equation}
\label{chidef}
\chi=\frac{R^2+{R^\ast}^2+(z-z^\ast)^2}{2RR^\ast},
\end{equation}
and $R$, $R^\ast$, $z$, $z^\ast$ are the
radial and vertical rotationally-invariant
cylindrical coordinates of $\r$ and $\r^\ast$
respectively.
It is understood that
\[
\frac1R=\frac1{k'}\dn(s,k)\dn({\rm i}t,k)+\frac{k}{k'}\cn(s,k)\sn({\rm i}t,k), \qquad
z=-{\rm i}kR\sn(s,k)\sn({\rm i}t,k),
\]
and, similarly, $R^\ast$, $z^\ast$ are expressed in terms of $s^\ast$, $t^\ast$.
One can express $\chi$ in a different way as follows.

\begin{lem}\label{l1}
Let $k\in(0,1)$,
$s,s^\ast\in\R$, $t,t^\ast\in(-K',K')$ such that {$0<t<t^\ast<K'$}. Then $\chi$
from \eqref{chidef} expressed in terms
of flat-ring cyclide coordinates
\eqref{2:coord1}
is given by
\begin{gather*}
\chi=k^2\sn(s,k)\sn({\rm i}t,k)\sn(s^\ast,k)\sn({\rm i}t^\ast,k)
 -\frac{k^2}{k'^2}\cn(s,k)\cn({\rm i}t,k)\cn(s^\ast,k)\cn({\rm i}t^\ast,k)
 \\ \hphantom{\chi=}
{}+\frac{1}{k'^2}\dn(s,k)\dn({\rm i}t,k)\dn(s^\ast,k)\dn({\rm i}t^\ast,k).
 \end{gather*}
\end{lem}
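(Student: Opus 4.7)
The plan is to substitute the flat-ring expressions for $R$ and $z$ (and their starred analogues) into the definition \eqref{chidef} of $\chi$ and reduce the claimed identity to an elementary algebraic identity among Jacobian elliptic functions. From the transcendental form \eqref{2:transc} we have $R = 1/T$ with
\[ T = \frac{1}{k'}\bigl(\dn(s,k)\dn(it,k) + k\cn(s,k)\cn(it,k)\bigr), \]
and $z = -ikR\,\sn(s,k)\sn(it,k)$, together with analogous expressions $T^\ast$, $R^\ast = 1/T^\ast$ and $z^\ast$ in terms of $s^\ast, t^\ast$.

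The first step is to obtain a clean formula for $\|\r\|^2 = R^2 + z^2$. Writing $D := \dn(s,k)\dn(it,k)$ and $C := \cn(s,k)\cn(it,k)$, a direct expansion using $\dn^2(u,k) = 1-k^2\sn^2(u,k)$ and $\cn^2(u,k) = 1-\sn^2(u,k)$ yields
\[ D^2 - k^2 C^2 = k'^2\bigl(1 - k^2\sn^2(s,k)\sn^2(it,k)\bigr). \]
Since $R^2 + z^2 = R^2\bigl(1 - k^2\sn^2(s,k)\sn^2(it,k)\bigr)$ and $D^2-k^2C^2 = (D+kC)(D-kC) = (k'T)(D-kC)$, dividing by $k'^2 T$ gives
\[ \|\r\|^2 = \frac{R}{k'}(D - kC), \]
and analogously $\|\r^\ast\|^2 = (R^\ast/k')(D^\ast - kC^\ast)$. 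In addition, $zz^\ast = -k^2 RR^\ast\,\sn(s,k)\sn(it,k)\sn(s^\ast,k)\sn(it^\ast,k)$.

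Now I would write $R^2 + {R^\ast}^2 + (z-z^\ast)^2 = \|\r\|^2 + \|\r^\ast\|^2 - 2zz^\ast$ and compare with $2RR^\ast$ times the candidate expression for $\chi$. The $\sn$-contributions on the two sides cancel directly. What remains, after dividing by $2RR^\ast$, multiplying by $k'^2$, and using $k'/R = D+kC$ together with $k'/R^\ast = D^\ast + kC^\ast$, is the purely algebraic identity
\[ (D^\ast + kC^\ast)(D - kC) + (D + kC)(D^\ast - kC^\ast) = 2\bigl(DD^\ast - k^2 CC^\ast\bigr), \]
whose left-hand side expands to the right-hand side because the cross terms $\pm k(CD^\ast - C^\ast D)$ cancel.

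No step presents a genuine obstacle; the only care required is sign bookkeeping, which works out because $\sn(it,k)$ is purely imaginary for real $t$, making $z$ real and producing the correct sign when $-2zz^\ast$ is matched against the $k^2\sn\sn\sn\sn$ term in the stated formula for $\chi$.
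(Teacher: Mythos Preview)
Your argument is correct and is essentially the same as the paper's proof. The paper introduces the auxiliary quantity $\tilde R$ with $1/\tilde R=(D-kC)/k'$ and records the identity $R^2+z^2=R/\tilde R$, which is exactly your formula $\|\r\|^2=R(D-kC)/k'$; the remaining decomposition $\chi=\frac{1}{2\tilde R R^\ast}+\frac{1}{2R\tilde R^\ast}-\frac{zz^\ast}{RR^\ast}$ and its evaluation are identical to your final algebraic step, which you have merely written out more explicitly.
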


\begin{proof}
Setting
\[
\frac{1}{\tilde R}=\frac1{k'}\dn(s,k)\dn({\rm i}t,k)-\frac{k}{k'}\cn(s,k)\sn({\rm i}t,k),
\]
then a short calculation shows that
$R=\tilde R(R^2+z^2).$
Now
\begin{gather*}
\chi=\frac{R^2+{R^\ast}^2+(z-z^\ast)^2}{2RR^\ast}=\frac{\big(R^2+z^2\big)+\big({R^\ast}^2+{z^\ast}^2\big)-2zz^\ast}{2RR^\ast}
= \frac{1}{2\tilde R R^\ast}+\frac{1}{2R\tilde R^\ast}-\frac{zz^\ast}{RR^\ast}.
\end{gather*}
If we express the last expression in terms of $s$, $t$, $s^\ast$, $t^\ast$, the
desired formula is obtained.
\end{proof}

Note that the Legendre functions $Q_\nu:=Q_\nu^0$, $P_\nu{:=}P_\nu^0$
are the associated Legendre func\-ti\-ons~$Q_\nu^\mu$, $P_\nu^\mu$ with $\mu=0$.
These Legendre functions $Q_{m-\frac12}$, for
$m\in\Z$, appear in the separation
of variables for the three-variable Laplace
equation in toroidal coordinates (see Section \ref{sec7} below.
By starting with Theorem \ref{5:main} and substituting
\eqref{5:int1},
\eqref{5:int2},
\eqref{5:ext1} and
\eqref{5:ext2}, we obtain
the following double summation
expression for
$\|{\bf r}-{\bf r}^\ast\|^{-1}$
in flat-ring cyclide coordinates
expressed
in terms of internal and external flat-ring harmonics
\begin{gather}\label{5:expansionb}
\frac{1}{\|\r-\r^\ast\|}=\frac{1}{2\sqrt{RR^\ast}}
\sum_{m\in\Z}\expe^{{\rm i}m(\phi-\phi^\ast)}\nonumber
\\ \hphantom{\frac{1}{\|\r-\r^\ast\|}=}
{}\times\sum_{n=0}^\infty \biggl(\Ec^n_{|m|-\frac12}(s,\kk)
\Ec^n_{|m|-\frac12}(s^\ast,\kk)
\Ec^n_{|m|-\frac12}({\rm i}t,\kk)
\Fc^n_{|m|-\frac12}({\rm i}t^\ast,\kk)\nonumber
\\ \hphantom{\frac{1}{\|\r-\r^\ast\|}=\times\sum_{n=0}^\infty \biggl(}
{}+\Es^{n+1}_{|m|-\frac12}(s,\kk)
\Es^{\,n+1}_{|m|-\frac12}(s^\ast,\kk)
\Es^{\,n+1}_{|m|-\frac12}({\rm i}t,\kk)
\Fs^{n+1}_{|m|-\frac12}({\rm i}t^\ast,\kk)
{\biggr)}.
\label{flatdoub}
\end{gather}
By comparing the azimuthal Fourier coefficients
of \eqref{flatdoub} and \eqref{oneoraz}, one
can obtain the following addition theorem
for the odd-half-integer Legendre function
of the second kind $Q_{m-\frac12}$, expressed
in terms of Lam\'{e} functions of the
first (simply periodic) and second kind (non-periodic)
in flat-ring cyclide coordinates.

\begin{thm}
\label{addnthm}Let $m\in{\N_0}$, $k\in(0,1)$,
$s,s^\ast\in\R$, $t,t^\ast\in(-K',K')$ such that {$0<t<t^\ast<K'$}. Then
\begin{gather*}
 Q_{m-\frac12}(\chi)= {\frac{\pi}{2}}
\sum_{n=0}^\infty \biggl(\Ec^n_{{m}-\frac12}(s,\kk)
\Ec^n_{{m}-\frac12}(s^\ast,\kk)\Ec^n_{{m}-\frac12}({\rm i}t,\kk)
\Fc^n_{{m}-\frac12}({\rm i}t^\ast,\kk) \nonumber
\\ \hphantom{Q_{m-\frac12}(\chi)= {\frac{\pi}{2}}
\sum_{n=0}^\infty \biggl(}
 {}+\Es^{n+1}_{{m}-\frac12}(s,\kk)
\Es^{\,n+1}_{{m}-\frac12}(s^\ast,\kk)
\Es^{\,n+1}_{{m}-\frac12}({\rm i}t,\kk)
\Fs^{n+1}_{{m}-\frac12}({\rm i}t^\ast,\kk)\biggr).
\end{gather*}
\end{thm}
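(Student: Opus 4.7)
The strategy is to compare two Fourier expansions in the azimuthal variable $\Delta\phi:=\phi-\phi^\ast$ of the common quantity $\|\r-\r^\ast\|^{-1}$: on one hand Heine's identity \eqref{oneoraz}, on the other the flat-ring double-sum expansion \eqref{flatdoub}, which follows from Theorem \ref{5:main} together with the definitions \eqref{5:int1}--\eqref{5:ext2}. The crucial preliminary observation is that the quantities $R$, $R^\ast$, $\chi$ are all independent of $\phi$ and $\phi^\ast$: $R=1/T$ depends only on $(s,t)$, similarly $R^\ast$ on $(s^\ast,t^\ast)$, and Lemma \ref{l1} expresses $\chi$ explicitly in terms of $(s,s^\ast,t,t^\ast)$ alone. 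Consequently, after multiplying both \eqref{oneoraz} and \eqref{flatdoub} by $2\sqrt{RR^\ast}$, each becomes a trigonometric Fourier series in $\Delta\phi\in(-\pi,\pi)$ whose coefficients are functions of $(s,s^\ast,t,t^\ast)$ only.

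I would then invoke uniqueness of Fourier coefficients. For Heine's series \eqref{oneoraz} the absolute convergence for $\chi>1$ (equivalently $\r\ne\r^\ast$) is classical. For the double series \eqref{flatdoub}, the absolute and uniform convergence on compact subsets of $D_1$ with $\r^\ast$ outside $\overline{D_1}$ was already established in the proof of Theorem \ref{5:Dirichlet} via the estimates of Lemmas \ref{4:l3}, \ref{4:l5}, \ref{4:l7}. Under $0<t<t^\ast<K'$ these apply, so Fubini permits us to rewrite the double sum as the iterated sum $\sum_{m\in\Z}e^{im\Delta\phi}\bigl(\sum_{n=0}^\infty\cdots\bigr)$. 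Equating the coefficient of $e^{im\Delta\phi}$ on both sides then yields, for every $m\in\Z$,
\[
\frac{2}{\pi}Q_{m-\frac12}(\chi)=\sum_{n=0}^\infty\Bigl(\Ec^n_{|m|-\frac12}(s,\kk)\Ec^n_{|m|-\frac12}(s^\ast,\kk)\Ec^n_{|m|-\frac12}(it,\kk)\Fc^n_{|m|-\frac12}(it^\ast,\kk)+\Es\text{-terms}\Bigr).
\]

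For $m\in\N_0$ one has $|m|=m$, and rearranging the prefactor produces exactly the asserted formula. The case $m<0$ gives no new information: combining the $m$- and $-m$-terms in \eqref{oneoraz} and using that the left-hand side is real forces the half-integer symmetry $Q_{-m-\frac12}=Q_{m-\frac12}$, which is precisely what makes the $m<0$ and $m>0$ coefficient identities collapse to the same statement (both involve Lam\'e functions of order $|m|-\tfrac12$). Restricting the theorem to $m\in\N_0$ is thus natural and lossless.

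The only genuinely delicate step is the justification of term-by-term Fourier comparison. Once one accepts absolute convergence of the double series (from Theorem \ref{5:Dirichlet}) and the classical convergence of Heine's identity, the rest is bookkeeping: factor out $2\sqrt{RR^\ast}$, invoke Lemma \ref{l1} to confirm $\phi$-independence of the coefficients, apply Fubini, and match the $m$-th Fourier mode. No additional properties of Lam\'e functions of the first and second kind beyond those collected in Sections \ref{sec4} and \ref{sec5} are needed.
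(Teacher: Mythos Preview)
Your proposal is correct and follows essentially the same approach as the paper: both proofs obtain the identity by equating the azimuthal Fourier coefficients of Heine's expansion \eqref{oneoraz} and the flat-ring double sum \eqref{flatdoub}, using Lemma~\ref{l1} to confirm that $\chi$ depends only on $(s,s^\ast,t,t^\ast)$. If anything, your version is more explicit about the convergence justifications (via Lemmas~\ref{4:l3}, \ref{4:l5}, \ref{4:l7} and Theorem~\ref{5:Dirichlet}) than the paper's own terse argument.
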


\begin{proof}
Start with
\begin{gather*}
R=\frac{\sqrt{1-k^2}}{k \cn(s,k)\cn({\rm i}t,k)+\dn(s,k)\dn({\rm i}t,k)},\qquad
z=\frac{-{\rm i} k\,\sqrt{1-k^2} \sn(s,k)\sn({\rm i}t,k)}{k\cn(s,k)\cn({\rm i}t,k)+\dn(s,k)\dn({\rm i}t,k)},
\end{gather*}
and similarly for $R^\ast$, $z^\ast$.
Then comparing
\eqref{oneoraz}
to \eqref{flatdoub} identifies the azimuthal Fourier coefficients as a Legendre function of odd-half-integer degree given in terms of two infinite series of products of Lam\'e functions of the first and second kind. This completes the proof.
\end{proof}

\begin{rem}
The above addition Theorem $\ref{addnthm}$ is
a rare instance of an infinite series representation over Lam\'e functions
which reduces to an analytic $($Gauss hypergeometric$)$ function.
\end{rem}

The discovery of the new addition Theorem \ref{addnthm} leads to an integral relation for products of Lam\'e functions of the first and second kind.

\begin{thm}\label{5:intrel}
Let $m,n\in\N_0$, $s^\ast\in\R$, $0<t<t^\ast<K'$.
Then
\[
\int_{-2K}^{2K} Q_{m-\frac12}(\chi) \Ec_{m-\frac12}^n(s)\,{\mathrm{d}}s= 2\pi \Ec^n_{m-\frac12}(s^\ast,k)
\Ec^n_{m-\frac12}({\rm i}t,k)
\Fc^n_{m-\frac12}({\rm i}t^\ast,k)
\]
and
\[
\int_{-2K}^{2K} Q_{m-\frac12}(\chi) \Es_{m-\frac12}^{n+1}(s)\,{\mathrm{d}}s= 2\pi \Es^{n+1}_{m-\frac12}(s^\ast,k)
\Es^{n+1}_{m-\frac12}({\rm i}t,k)\Fs^{n+1}_{m-\frac12}({\rm i}t^\ast,k).
\]
\end{thm}

\begin{proof}
Recall that we proved uniform convergence of the series in Theorem~\ref{addnthm} with respect to $s$ for fixed $s^\ast$, $t$, $t^\ast$.
Therefore, the integral relations follows from the orthogonality of the function system
\[
\Ec_{m-\frac12}^n(s,k),\qquad \Es_{m-\frac12}^{n+1}(s,k),\qquad n\in\N_0,
\]
over the interval $-2K\le s\le 2K$ and the normalization
\[
\int_0^K \big\{\Ec_{m-\frac12}^n(s,k)\big\}^2\,{\mathrm{d}}s=\int_0^K \big\{\Es_{m-\frac12}^{n+1}(s,k)\big\}^2\,{\mathrm{d}}s=1
\]
which leads to
\[
\int_{-2K}^{2K} \big\{\Ec_{m-\frac12}^n(s,k)\big\}^2\,{\mathrm{d}}s=\int_{-2K}^{2K} \big\{\Es_{m-\frac12}^{n+1}(s,k)\big\}^2\,{\mathrm{d}}s=4,
\]
which completes the proof.
\end{proof}

Theorem \ref{5:intrel} is a special case of \cite[Corollary 2.8]{Volkmer84}.
This reference shows that Theorem~\ref{5:intrel} remains true if $m$ is any nonnegative real number
in place of a nonnegative integer.
The statement of Theorem~\ref{5:intrel} suggests an alternative proof of the expansion of a fundamental solution of
the three-variable Laplace equation
in terms of internal and external
flat-ring cyclidic harmo\-nics~\eqref{5:expansion}.
One can then start with the known Theorem~\ref{5:intrel} and
then derive Theorem~\ref{addnthm} from~it. One can then use Theorem \ref{addnthm}
in order to obtain the expansion of a fundamental solution using~\eqref{oneoraz}.

\section{Lemmas on second-order linear differential equations}
\label{sec6}

As we show in Section \ref{sec7}, flat-ring coordinates approach toroidal coordinates in the limit $k\to 0$. Therefore, we expect that special functions found by separation of the Laplace equation in flat-ring coordinates will approach
corresponding special functions found by separation of the Laplace equation in toroidal coordinates. In order to
make this statement precise we introduce the following three lemmas.

\begin{lemma}\label{6:l1}\quad
\begin{enumerate}\itemsep=0pt
\item[$(a)$]
Let $a\in\R$, and let $\{b_n\}$ be a sequence of real numbers such that $a<b_n\to \infty$ as $n\to\infty$.
\item[$(b)$]
For $n\in\N$, let $p_n,q_n\colon [a,b_n]\to\R$ be continuous functions such that $q_n(x)<0$ for all $x\in[a,b_n]$.
For every $n\in\N$, let $y_n\colon [a,b_n]\to\R$ be a nontrivial solution of the differential equation
\begin{equation*}
y_n''+p_n(x)y_n'+q_n(x)y_n =0
\end{equation*}
such that $y_n(b_n)=0$.
\item[$(c)$]
Let $p_\infty, q_\infty\colon [a,\infty)\to\R$ be continuous functions such that
$p_n(x)\to p_\infty(x)$ and $q_n(x)\to q_\infty(x)$ as $n\to\infty$ uniformly on each compact interval $[a,b]$. Suppose that the differential equation
\begin{gather}\label{limitode}
 y_\infty''+p_\infty(x)y'_\infty+q_\infty(x)y_\infty=0
\end{gather}
admits a bounded nontrivial solution $y_\infty\colon [a,\infty)\to\R$, and that every solution of \eqref{limitode} which is linearly independent of $y_\infty$
is unbounded as $x\to\infty$.
\end{enumerate}

Under assumptions $(a)$, $(b)$, $(c)$, we have
\begin{equation}\label{conv1}
 \frac{y_n(x)}{y_n(a)}\to \frac{y_\infty(x)}{y_\infty(a)}\qquad \text{and} \qquad \frac{y_n'(x)}{y_n(a)}\to \frac{y'_\infty(x)}{y_\infty(a)}
\end{equation}
as $n\to\infty$ uniformly on every compact interval $[a,b]$.
The same result is true if the condition $y_n(b_n)=0$ is replaced by $y_n'(b_n)=0$.
\end{lemma}

\begin{proof}
Without loss of generality we assume that $y_n'(b_n)<0$.
Consider $y_n$ for a fixed $n$. There is $\epsilon>0$ such that $y_n(x)>0$, $y_n'(x)<0$ for $x\in(b_n-\epsilon,b_n)$.
Suppose there is $x_0\in[a,b_n)$ such that $y_n'(x_0)=0$ and $y_n'(b_n)<0$ for $x\in(x_0,b_n]$.
This implies the contradiction $0\ge y_n''(x_0)=-q_n(x_0)y_n(x_0)>0$.
Therefore,
\begin{equation}\label{yn}
y_n(x)>0\qquad \text{and}\qquad y_n'(x)<0\qquad \text{for all}\quad x\in[a,b_n).
\end{equation}
Suppose that the sequence $\frac{y_n'(a)}{y_n(a)}$ converges to $-\infty$. Without loss of generality, we assume that $y_n'(a)=-1$ for all $n\in\N$.
Then $y_n(a)\to 0$ as $n\to\infty$.
By assumption $(c)$, it follows that $y_n(x)\to y(x)$ uniformly on $[a,b]$, where $y(x)$ is the solution of \eqref{limitode} with initial conditions
$y(a)=0$, $y'(a)=-1$. This contradicts \eqref{yn}.

Now suppose that the sequence $\frac{y_n'(a)}{y_n(a)}$ converges to $c\in(-\infty,0]$.
Without loss of generality, we assume that $y_n(a)=1$ for all $n\in\N$.
Then $y_n'(a)\to c$ as $n\to\infty$.
It follows that $y_n(x)\to y(x)$ and $y_n'(x)\to y'(x)$ as $n\to\infty$ uniformly on compact intervals $[a,b]$, where $y(x)$ is the solution of~\eqref{limitode} with initial conditions $y(a)=1$, $y'(a)=c$.
It follows from \eqref{yn} that $0\le y(x)\le 1$ for all $x\ge a$. Therefore, $y$ must be a constant multiple of $y_\infty$.
This shows that $c=\frac{y'_\infty(a)}{y_\infty(a)}$ is uniquely determined.
By taking subsequences if necessary, this proves that $\frac{y_n'(a)}{y_n(a)}\to c$
and so~\eqref{conv1} is established.
The proof in the case $y_n'(b_n)=0$ is very similar.
\end{proof}

The following result is well-known.

\begin{lemma}\label{6:l2}
Let $D$ be a simply-connected domain in $\C$, $a\in D$. For $n\in\N$, let $p_n,q_n,p_\infty,q_\infty${\rm :} $D\to\C$ be analytic functions such that
$p_n(z)\to p_\infty(z)$ and $q_n(z)\to q_\infty(z)$ locally uniformly for~$z\in D$. For each $n\in\N$ let $y_n\colon D\to\C$ be a solution
of the differential equation
\[
y_n''+p_n(z)y_n'+q_n(z)y_n=0 ,
\]
and let $y_\infty\colon D\to\C$ be a solution of
\[
y_\infty''+p_\infty(z)y_\infty'+q_\infty(z)y_\infty=0.
\]
If
\[
y_n(a)\to y_\infty(a),\qquad y_n'(a)\to y_\infty'(a)\qquad \text{as}\quad n\to\infty,
\]
then
\[
y_n(z)\to y_\infty(z) \qquad\text{as}\quad n\to\infty\quad \text{locally uniformly for}\quad z\in D.
\]
\end{lemma}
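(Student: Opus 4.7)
\textbf{Proof proposal for Lemma \ref{6:l2}.}
The plan is to reduce the claim to a Gronwall-type estimate on disks and then propagate along paths using the fact that $D$ is path-connected. Rewrite the ODE as a first-order system by setting $Y_n=(y_n,y_n')^\top$ so that $Y_n'=A_n(z)Y_n$ with
\[
A_n(z)=\begin{pmatrix} 0 & 1\\ -q_n(z) & -p_n(z)\end{pmatrix},
\]
and analogously $A_\infty$. Local uniform convergence $p_n\to p_\infty$, $q_n\to q_\infty$ translates to local uniform convergence $A_n\to A_\infty$ in any matrix norm $\|\cdot\|$.

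The first step is a one-disk estimate. Suppose $\overline{B(z_0,r)}\subset D$ and that $Y_n(z_0)\to Y_\infty(z_0)$. By local uniform convergence, there is $M>0$ with $\|A_n(z)\|\le M$ for all $n$ and all $z\in\overline{B(z_0,r)}$, and $\varepsilon_n:=\sup_{z\in\overline{B(z_0,r)}}\|A_n(z)-A_\infty(z)\|\to 0$. For $z\in B(z_0,r)$ I parametrize the straight segment by $\gamma(t)=z_0+t(z-z_0)$, $t\in[0,1]$, integrate $Y_n'=A_n Y_n$ and $Y_\infty'=A_\infty Y_\infty$ along $\gamma$, and subtract to obtain
\[
Y_n(z)-Y_\infty(z)=Y_n(z_0)-Y_\infty(z_0)+(z-z_0)\!\int_0^1\!\bigl[A_n(\gamma(t))(Y_n-Y_\infty)(\gamma(t))+(A_n-A_\infty)(\gamma(t))Y_\infty(\gamma(t))\bigr]\,dt.
\]
Setting $\varphi_n(t):=\|Y_n(\gamma(t))-Y_\infty(\gamma(t))\|$ and using that $Y_\infty$ is bounded on $\overline{B(z_0,r)}$ by some $C$, Gronwall's inequality applied to $\varphi_n$ yields
\[
\sup_{z\in\overline{B(z_0,r')}}\|Y_n(z)-Y_\infty(z)\|\le \bigl(\|Y_n(z_0)-Y_\infty(z_0)\|+r\,C\,\varepsilon_n\bigr)\,e^{rM}
\]
for any $r'<r$; hence convergence at $z_0$ upgrades to uniform convergence on the closed subdisk, including of the first component $y_n\to y_\infty$ and (because $y_n'$ is the second component) of the derivatives.

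The second step globalizes this by a chaining argument. Fix any compact $K\subset D$. Since $D$ is open and path-connected, for each $w\in K$ there is a polygonal path in $D$ from $a$ to $w$; by compactness of $K$ and the path, there is $\delta>0$ such that $\overline{B(z,2\delta)}\subset D$ for every $z$ within distance $\delta$ of any such path, and finitely many paths suffice to reach all of $K$. Thus there is a finite chain of points $a=z_0,z_1,\dots,z_N$ with $z_{j+1}\in B(z_j,\delta)$ and $\overline{B(z_j,2\delta)}\subset D$, together with a finite collection of such chains whose endpoint disks cover $K$. Applying the one-disk estimate successively, the hypothesis $Y_n(a)\to Y_\infty(a)$ propagates to $Y_n(z_j)\to Y_\infty(z_j)$ for each $j$, and hence to uniform convergence on $\overline{B(z_j,\delta)}$. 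Finitely many such disks cover $K$, which gives the desired local uniform convergence on $D$.

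The only delicate point is that the chain must stay inside $D$; simple connectedness is not really used for the convergence statement (path-connectedness suffices), but it is what guarantees that the analytically continued solutions $Y_n$ and $Y_\infty$ on $D$ are well defined and single-valued in the first place, so that the target of the chain argument makes sense. The main obstacle is purely bookkeeping: choosing the chain of disks and the uniform constants $M,C,\varepsilon_n$ correctly so that the Gronwall constants do not blow up along the chain. Since the chain has only finitely many links and each link produces only a multiplicative factor $e^{2\delta M}$ and an additive error $O(\varepsilon_n)$, both contributions remain bounded, and the proof is complete.
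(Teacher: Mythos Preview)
The paper does not actually prove this lemma: it simply declares the result ``well-known'' and moves on. Your argument is correct and supplies what the paper omits. The reduction to a first-order system, the Gronwall estimate on a disk, and the chain-of-disks propagation are all standard and sound; your closing remark that simple connectedness is what makes the global solutions $y_n$, $y_\infty$ single-valued (via the monodromy theorem for linear ODEs with analytic coefficients) is exactly the right observation. One cosmetic point: the restriction to a subdisk of radius $r'<r$ is unnecessary, since the straight segment from $z_0$ to any $z\in\overline{B(z_0,r)}$ already lies in $\overline{B(z_0,r)}$, so your Gronwall bound holds on the full closed disk.
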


The following lemma can be considered as known although {we} do not have a precise reference. We prove it following \cite[Section~4.5]{Coddington}.

\begin{lemma}\label{6:l3}
Let $D$ be a simply-connected domain in the complex plane $\C$ containing $0$.
Let $p_n\colon D\to\C$ be analytic functions for each $n\in\N\cup\{\infty\}$ such that $p_n(z)\to p_\infty(z)$ locally uniformly
on $D$, and $p_n(0)=\nu(\nu+1)$ for all $n\in\N\cup\{\infty\}$, where $\nu\ge -\frac12$. For each $n\in\N\cup\{\infty\}$, let $u_n\colon D\to\C$ be the unique analytic function such that
$u_n(0)=1$ and $y_n(z):=z^{\nu+1}u_n(z)$ solves
\begin{equation}\label{oden}
 z^2y_n''=p_n(z) y_n.
 \end{equation}
Then $u_n(z)\to u_\infty(z)$ locally uniformly on $D$.
\end{lemma}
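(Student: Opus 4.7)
The plan is to reduce the claim to convergence of Taylor coefficients of $u_n$ at $0$ (via the Frobenius recursion), establish a uniform majorant that yields convergence on a full disk around $0$, and then propagate this convergence to all of $D$ using Lemma \ref{6:l2}.

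Substituting $y_n=z^{\nu+1}u_n$ into $z^2y_n''=p_n(z)y_n$ gives the ODE
\[ z^2 u_n''+2(\nu+1)zu_n'+(\nu(\nu+1)-p_n(z))u_n=0.\]
Writing $p_n(z)=\sum_{k\ge 0}p_k^{(n)}z^k$ (so $p_0^{(n)}=\nu(\nu+1)$) and $u_n(z)=\sum_{k\ge 0}c_k^{(n)}z^k$ (with $c_0^{(n)}=1$), comparison of coefficients produces
\[ k(k+2\nu+1)\,c_k^{(n)}=\sum_{j=0}^{k-1}p_{k-j}^{(n)}c_j^{(n)},\quad k\ge 1.\]
Since $\nu\ge -\tfrac12$, we have $k+2\nu+1\ge 1$ for $k\ge 1$, so the recursion determines each $c_k^{(n)}$ uniquely as a polynomial in $p_1^{(n)},\dots,p_k^{(n)}$.

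By Cauchy's integral formula applied on a small closed disk around $0$, the local uniform convergence $p_n\to p_\infty$ gives $p_k^{(n)}\to p_k^{(\infty)}$ for every $k$, and then the recursion yields $c_k^{(n)}\to c_k^{(\infty)}$ by induction. To pass from coefficient-wise to uniform convergence, fix $R>0$ with $\overline{B(0,R)}\subset D$ and set $M:=\sup_n\sup_{|z|\le R}|p_n(z)|$, which is finite by uniform convergence. Cauchy's inequalities give $|p_k^{(n)}|\le MR^{-k}$, and the majorant sequence defined by $C_0=1$ and $k(k+2\nu+1)C_k=M\sum_{j=0}^{k-1}R^{-(k-j)}C_j$ satisfies $|c_k^{(n)}|\le C_k$ by induction for every $n,k$. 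A short computation (introduce $D_k:=R^kC_k$; then $D_k/D_{k-1}=1+O(k^{-2})$) shows $C_k=O(R^{-k}k^{-2})$, so $\sum C_kr^k<\infty$ for every $r<R$. Dominated convergence of the series $\sum c_k^{(n)}z^k$ then yields uniform convergence $u_n\to u_\infty$ on each closed disk $|z|\le r<R$.

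To extend to all of $D$, I rewrite the ODE for $u_n$ away from the origin as
\[ u_n''+\frac{2(\nu+1)}{z}u_n'+\frac{\nu(\nu+1)-p_n(z)}{z^2}u_n=0,\]
whose coefficients are analytic on $D\setminus\{0\}$ and converge locally uniformly there. Given a compact set $K\subset D$, the set $K\setminus B(0,r/2)$ is compact in $D\setminus\{0\}$ and can be covered by finitely many simply-connected subdomains $D_1,\dots,D_N$ of $D\setminus\{0\}$, each intersecting $B(0,r)$. Choose an initial point $z_j\in D_j\cap B(0,r)$; the disk convergence established above, together with the Cauchy estimate applied to $u_n-u_\infty$ on a slightly larger disk, gives $u_n(z_j)\to u_\infty(z_j)$ and $u_n'(z_j)\to u_\infty'(z_j)$. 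Lemma \ref{6:l2} on $D_j$ then provides locally uniform convergence on $D_j$, and combining over $j$ with uniform convergence on $\overline{B(0,r/2)}$ gives uniform convergence on $K$. The main obstacle is precisely the regular singular point at $0$, where Lemma \ref{6:l2} cannot be invoked directly; this is exactly what the Frobenius-majorant argument on a disk around $0$ is designed to overcome, after which the extension by analytic continuation is routine.
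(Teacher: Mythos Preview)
Your proof is correct and follows essentially the same Frobenius--majorant argument as the paper: derive the recursion $k(k+2\nu+1)c_k^{(n)}=\sum_{j<k}p_{k-j}^{(n)}c_j^{(n)}$, get coefficient-wise convergence, dominate by a majorant sequence built from Cauchy bounds on $p_n$, and conclude uniform convergence on small disks around $0$. The paper compresses your extension step into the line ``without loss of generality we take $D$ to be a disk''; you spell out the justification by invoking Lemma~\ref{6:l2} on simply-connected patches of $D\setminus\{0\}$ seeded from the disk, which is the intended meaning. One minor slip: with $D_k:=R^kC_k$ the recursion gives $D_k/D_{k-1}=1-2/k+O(k^{-2})$, not $1+O(k^{-2})$; either way $D_k$ is bounded (in fact $O(k^{-2})$), so $\sum C_kr^k<\infty$ for $r<R$ and your conclusion stands.
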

\begin{proof}
Without loss of generality we take $D=\{z\in\C\colon |z|<R\}$. The point $z=0$ is a regular singularity of differential equation \eqref{oden}. The exponents are $-\nu$ and $\nu+1$.
Since $\nu\ge -\frac12$, the solutions $y_n$ as stated in the lemma exist.
We write
\begin{gather*}
 p_n(z)=\sum_{k=0}^\infty a_{n,k}z^k,\qquad a_{n,0}=\nu(\nu+1),
 \\
 u_n(z)=\sum_{k=0}^\infty b_{n,k}z^k,\qquad b_{n,0}=1.
\end{gather*}
By substituting $y_n(z)=z^{\nu+1}u_n(z)$ in \eqref{oden}, we obtain
\begin{equation*}
 k(k+2\nu+1)b_{n,k} =\sum_{j=0}^{k-1} a_{n,k-j}b_{n,j} \qquad \text{for}\quad k\in\N.
\end{equation*}
Since $a_{n,k}\to a_{\infty,k}$ as $n\to\infty$ for all $k\in\N_0$, we obtain by induction that
\begin{equation}\label{conv}
b_{n,k}\to b_{\infty,k}\qquad\text{as}\quad n\to\infty\qquad \text{for}\quad k\in\N_0.
\end{equation}

Let $0<r_1<r_2<R$. By assumption, there is a constant $M$ ({\rm i}ndependent of $n$, $z$) such that $|p_n(z)|\le M$ for all $n\in\N$ and $|z|\le r_2$.
By Cauchy's estimate, $|a_{n,k}|\le M r_2^{-k}$ for all $n$, $k$.
Set $\gamma_0=1$ and define $\gamma_k$ for $k\in\N$ recursively by
\begin{equation}\label{recursion2}
 k^2\gamma_k=M\sum_{j=0}^{k-1} r_2^{j-k}\gamma_j.
\end{equation}
By induction, we see that $|b_{n,k}|\le \gamma_k$ for all $n$, $k$.
Replacing $k$ by $k+1$ in \eqref{recursion2}, we find
\[ r_2(k+1)^2 \gamma_{k+1}=\big(M+k^2\big)\gamma_k.\]
The ratio test shows that
\begin{equation}\label{conv2}
\sum_{k=0}^\infty \gamma_k r_1^k<\infty .
\end{equation}
Since $|b_{n,k}z^k|\le \gamma_kr_1^k$ for $|z|\le r_1$ and all $n$, $k$, we obtain from \eqref{conv} and \eqref{conv2} that $u_n(z)\to u_\infty(z)$ as $n\to\infty$ uniformly for $|z|\le r_1$. This completes the proof.
\end{proof}

\section[The limit k to 0 to toroidal coordinates]
{The limit $\boldsymbol{k\to 0}$ to toroidal coordinates}\label{sec7}

\subsection[Flat-ring coordinates in the limit k to 0]
{Flat-ring coordinates in the limit $\boldsymbol {k\to0}$}
\label{toroidal}

In the following we will frequently use that
\begin{equation*}
K(k)\to\frac12\pi,\qquad K'(k)\to \infty\qquad\text{as}\quad k\to 0.
\end{equation*}
Moreover,
\begin{equation}\label{k0}
\sn(\zeta,k)\to\sin \zeta,\qquad \cn(\zeta,k)\to \cos \zeta,\qquad \dn(\zeta,k)\to 1\qquad
\text{as} \quad k\to 0
\end{equation}
locally uniformly on $\C$ \cite[Table~22.5.3]{NIST:DLMF} and
\begin{equation}\label{k1}
 \sn(\zeta,k)\to\tanh \zeta,\qquad \cn(\zeta,k)\to \sech \zeta,\qquad \dn(\zeta,k)\to \sech \zeta\qquad\text{as}\quad k\to 1
\end{equation}
locally uniformly at all points $\zeta$ such that $\cosh \zeta\ne 0$ \cite[Table~22.5.4]{NIST:DLMF}.

We use the first variant of flat-ring coordinates $s\in(-2K,2K)$, $t\in(0,K')$, and set $\tau=K'-t$, $\psi=2K-s$.
Then
\begin{equation*}
 T= \frac{\dn(\psi,k)-\cn(\psi,k)\dn(\tau,k')}{k'\sn(\tau,k')}.
\end{equation*}
Therefore,
\[
\lim_{k\to 0} T=\frac{\cosh\tau-\cos \psi}{\sinh \tau}.
\]
It follows that
\[
\lim_{k\to0} x=\frac{\sinh\tau\cos\phi}{\cosh \tau-\cos \psi},\qquad
\lim_{k\to 0} y=\frac{\sinh\tau\sin\phi}{\cosh \tau-\cos\psi}.
\]
Moreover,
\[
z=\frac{k'\sn(\psi,k)\cn(\tau,k')}{\dn(\psi,k)-\cn(\psi,k)\dn(\tau,k')}
\]
gives
\[
\lim_{k\to 0} z=\frac{\sin\psi}{\cosh\tau-\cos\psi} .
\]
These limits can be taken for $\tau>0$ and $\psi\in(0,2\pi)$.
Therefore, flat-ring coordinates become toroidal coordinates \eqref{1:toroidal} as $k\to0$.

\subsection[Simply-periodic Lam\'e functions in the limit k to 0]
{Simply-periodic Lam\'e functions in the limit $\boldsymbol{k\to 0}$}

\begin{thm}\label{Elimit}
For $\nu\ge -\frac12$, we have
\begin{gather*}
 a_\nu^n(\kk)\to n^2,\qquad n\in\N_0,
 \\
 b_\nu^n(\kk)\to n^2,\qquad n\in\N,
 \\
 \Ec_\nu^0(s,\kk)\to \bigg(\frac{2}{\pi}\bigg)^{1/2},
 \\
 \Ec_\nu^n(s,\kk)\to \bigg(\frac{4}{\pi}\bigg)^{1/2}\cos\bigg(n\bigg(\frac\pi2-s\bigg)\bigg),\qquad n\in\N,
 \\
 \Es_\nu^n(s,\kk)\to \bigg(\frac{4}{\pi}\bigg)^{1/2}\sin\bigg(n\bigg(\frac\pi2-s\bigg)\bigg),\qquad n\in\N
\end{gather*}
as $k\to 0$ locally uniformly for $s\in\C$.
\end{thm}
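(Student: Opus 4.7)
The plan is to split the proof cleanly into an eigenvalue step, handled by Lemma~\ref{4:l1} (sandwich estimate), and an eigenfunction step, handled by Lemma~\ref{6:l2} (analytic continuation of solutions under coefficient perturbation).

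For the eigenvalues, no new work is really required. Lemma~\ref{4:l1} already gives, for $\nu\ge 0$,
\[ \frac{\pi^2 n^2}{4K^2}\le a_\nu^n(\kk)\le \nu(\nu+1)k^2+\frac{\pi^2 n^2}{4K^2}, \]
and by \eqref{K} both the lower and upper bound tend to $n^2$ as $k\to 0$. The complementary case $-\tfrac12\le \nu<0$ and the corresponding bounds for $b_\nu^n$ are handled in the same way, delivering $a_\nu^n(\kk),b_\nu^n(\kk)\to n^2$.

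For the eigenfunctions, I fix one of the four BC types in Table~\ref{4:table2} and let $y(\cdot,k)$ be the solution of \eqref{4:Lame} whose $h$ is the relevant eigenvalue and whose initial data at $s=0$ matches the BC there (Neumann: $y(0)=1,\,y'(0)=0$; Dirichlet: $y(0)=0,\,y'(0)=1$). Because $K'(k)\to\infty$ by \eqref{K}, on any fixed disk $\{|s|<R\}\subset\C$ the function $\sn^2(\cdot,k)$ is eventually analytic, and by \eqref{k0} the coefficient $\nu(\nu+1)k^2\sn^2(s,k)$ tends to $0$ uniformly on that disk. Lemma~\ref{6:l2} then yields $y(s,k)\to y_\infty(s)$ locally uniformly on $\C$, where $y_\infty$ solves $y_\infty''+n^2 y_\infty=0$ with the same initial data --- explicitly, a pure cosine, a pure sine, or (when $n=0$ with Neumann data) the constant $1$. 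The $L^2$-normalisation is preserved in the limit because $\int_0^{K(k)} y(s,k)^2\,ds\to\int_0^{\pi/2} y_\infty(s)^2\,ds$ (locally uniform convergence of $y^2$ on a neighbourhood of $[0,\pi/2]$ together with $K(k)\to\pi/2$), and this integral evaluates to $\pi/4$ for $n\ge 1$ and to $\pi/2$ for $n=0$, producing the normalisation constants $(4/\pi)^{1/2}$ and $(2/\pi)^{1/2}$ in the statement.

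The main obstacle I anticipate is reconciling the sign of $\Ec_\nu^n$ and $\Es_\nu^n$ with the specific shifted arguments $\cos(n(\tfrac\pi2-s))$ and $\sin(n(\tfrac\pi2-s))$ of the statement: these differ from the ``naive'' limits $\cos(ns)$ and $\sin(ns)$ by signs of the form $(-1)^{\lfloor n/2\rfloor}$, so some bookkeeping is unavoidable. My plan is to pin down the Lam\'e-function sign convention of Table~\ref{4:table2} by evaluation at $s=K$ --- using the value of the function itself in the Neumann-at-$K$ cases and its derivative in the Dirichlet-at-$K$ cases --- and then transport that sign across the $k\to 0$ limit using analytic dependence of $\Ec_\nu^n(\cdot,k),\Es_\nu^n(\cdot,k)$ on $k$ together with the non-vanishing of the prescribed trigonometric limit at the reference point.
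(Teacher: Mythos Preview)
Your proposal is correct and follows essentially the same two-step approach as the paper: Lemma~\ref{4:l1} for the eigenvalue limits and Lemma~\ref{6:l2} together with \eqref{k0} for the eigenfunction limits. You in fact fill in details---the passage of the $L^2$-normalisation through the limit and the sign bookkeeping needed to match the shifted arguments $\cos(n(\tfrac\pi2-s))$, $\sin(n(\tfrac\pi2-s))$---that the paper's three-line proof leaves implicit.
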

\begin{proof}
The limits of the eigenvalues follow from
Lemma \ref{4:l1}. The limits of the eigenfunctions follow from Lemma \ref{6:l2}
and \eqref{k0}.
\end{proof}

In the following theorem, $Q$ denotes the Legendre function of the second kind \cite[Chapter~14]{NIST:DLMF}.

\begin{thm}\label{Elimit2}
Let $\tau_0>0$, $\nu\ge -\frac12$, $n\in\N_0$. Then we have
\[
\frac{\Ec_\nu^n({\rm i}(K'-\tau),\kk)}{\Ec_\nu^n({\rm i}(K'-\tau_0),\kk)}\to \frac{(\sinh\tau)^{1/2}\,Q_{n-\frac12}^{\nu+\frac12}(\cosh \tau)}{(\sinh\tau_0)^{1/2}\,Q_{n-\frac12}^{\nu+\frac12}(\cosh\tau_0)}
\]
as $k\to 0$ locally uniformly for $\operatorname{Re}\tau>0$.
The same results holds with $\Es_\nu^n$ in place of $\Ec_\nu^n$ for $n\in\N$.
\end{thm}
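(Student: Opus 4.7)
The plan is to represent $\tilde W_k(\tau) := \Ec_\nu^n(i(K'-\tau),\kk)$ as the solution of a second-order ODE on $(0,K')$, pass to the limit $k\to 0$ in both the ODE and the boundary condition at $\tau = K'$, and invoke Lemmas~\ref{6:l1} and~\ref{6:l2} to identify the limit.

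Substituting $t = K' - \tau$ in the modified Lam\'e equation~\eqref{4:modlame} and using the Jacobi quarter-period shifts yields $k^2\ssc^2(K'-\tau,k') = \cn^2(\tau,k')/\sn^2(\tau,k')$, so that $\tilde W_k$ satisfies
\[
\tilde W_k''(\tau) = \tilde q_k(\tau)\,\tilde W_k(\tau),\qquad \tilde q_k(\tau) := a_\nu^n(\kk) + \nu(\nu+1)\,\frac{\cn^2(\tau,k')}{\sn^2(\tau,k')}.
\]
The parity conditions at $s = 0$ recorded in Table~\ref{4:table2} translate into $\tilde W_k'(K') = 0$ for even $n$ and $\tilde W_k(K') = 0$ for odd $n$. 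By Theorem~\ref{Elimit}, $a_\nu^n(\kk) \to n^2$; since $k' \to 1$, the limits~\eqref{k1} give $\cn^2(\tau,k')/\sn^2(\tau,k') \to 1/\sinh^2\tau$ locally uniformly on $(0,\infty)$, so $\tilde q_k(\tau) \to \tilde q_\infty(\tau) := n^2 + \nu(\nu+1)/\sinh^2\tau$ locally uniformly, while $K'(\kk) \to \infty$ by~\eqref{K}.

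The substitution $\tilde W = \sinh^{1/2}\tau\cdot U$ transforms $\tilde W'' = \tilde q_\infty\tilde W$ into the associated Legendre equation of degree $n-\tfrac12$ and order $\nu+\tfrac12$ in $\cosh\tau$, so $f_\infty(\tau) := \sinh^{1/2}\tau\cdot Q_{n-1/2}^{\nu+1/2}(\cosh\tau)$ solves the limit ODE. The large-$\tau$ asymptotic $Q_{n-1/2}^{\nu+1/2}(\cosh\tau) \sim c\,e^{-(n+1/2)\tau}$ gives $f_\infty = O(e^{-n\tau})$, hence bounded on every $[\tau_1,\infty)$ with $\tau_1 > 0$. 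Reduction of order (or direct inspection of $\sinh^{1/2}\tau\cdot P_{n-1/2}^{\nu+1/2}(\cosh\tau)$, which grows like $e^{n\tau}$ for $n\ge 1$ and acquires a $\tau$-factor from the coincidence of Frobenius exponents $\pm\tfrac12$ at $\tau = \infty$ when $n = 0$) shows that every solution linearly independent of $f_\infty$ is unbounded. Now for any $\tau_1\in(0,\tau_0)$ and any sequence $\kk_n\to 0$, apply Lemma~\ref{6:l1} on $[\tau_1, K'(\kk_n)]$ with $p_n = 0$, $q_n = -\tilde q_{k_n}$, $b_n = K'(\kk_n) \to \infty$, and the Neumann or Dirichlet condition at $b_n$. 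For $\nu\ge 0$ with $(\nu,n)\ne(0,0)$, Lemma~\ref{4:l1} gives $\tilde q_k > 0$; Lemma~\ref{6:l1} then delivers
\[
\frac{\tilde W_{k_n}(\tau)}{\tilde W_{k_n}(\tau_1)} \,\longrightarrow\, \frac{f_\infty(\tau)}{f_\infty(\tau_1)}
\]
uniformly on compact subsets of $[\tau_1,\infty)$, together with the analogous limit for derivatives. Taking ratios at $\tau$ and $\tau_0$ gives the claimed limit for real $\tau > 0$; since the singularities of $\tilde q_k$ in $\{\Re\tau>0\}$ recede to infinity as $\kk\to 0$, Lemma~\ref{6:l2} lifts the convergence to any compact subset of the open right half-plane.

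The main obstacle is the range $-\tfrac12\le\nu<0$, where $\nu(\nu+1)<0$ and $\tilde q_k$ can become negative near $\tau = 0$, invalidating the sign hypothesis $q_n < 0$ of Lemma~\ref{6:l1}. This is circumvented by working with the transformed function $\cd^{-1/2}(t,k')\tilde W_k$, which satisfies~\eqref{4:transmodlame} with the strictly positive coefficient $Q(t)$ from the proof of Lemma~\ref{4:l6}, after which the same scheme applies. The degenerate case $(\nu,n) = (0,0)$ is trivial since $\Ec_0^0 \equiv 1/\sqrt{K}$ and $Q_{-1/2}^{1/2}(\cosh\tau) = i\sqrt{\pi/2}\,\sinh^{-1/2}\tau$, making both sides of the claim identically $1$. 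The case $\Es_\nu^{n+1}$ proceeds identically, with the Dirichlet/Neumann conditions at $\tau = K'$ swapped according to the parity of $n+1$.
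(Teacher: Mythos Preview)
Your argument is essentially correct and rests on the same two tools as the paper: Lemma~\ref{6:l1} to identify the normalized limit of the ODE family on an unbounded interval, and Lemma~\ref{6:l2} to propagate real convergence to the complex half-plane. The main organizational difference is that you split into three cases ($\nu\ge 0$ non-degenerate, the degenerate pair $(\nu,n)=(0,0)$, and $-\tfrac12\le\nu<0$), whereas the paper applies a \emph{single} substitution $v(\tau)=\sn^{-1/2}(\tau,k')\,w(\tau)$ uniformly for all $\nu\ge-\tfrac12$. In fact your proposed $\cd^{-1/2}(t,k')$ factor is the same substitution in disguise: the quarter-period identity $\cd(K'-\tau,k')=\sn(\tau,k')$ makes the two coincide after the change $t=K'-\tau$. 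The resulting equation has coefficient
\[
q(\tau,k)=\tfrac14 k^2-a_\nu^n(\kk)-\tfrac34\dn^2(\tau,k')-(\nu+\tfrac12)^2\cs^2(\tau,k'),
\]
and the bound $a_\nu^n(\kk)\ge-\tfrac14 k^2$ from Lemma~\ref{4:l1} together with $\dn(\tau,k')\ge k$ gives $q\le-\tfrac14 k^2<0$ \emph{for every} $\nu\ge-\tfrac12$, so no case distinction is needed and the $(0,0)$ case requires no separate treatment. Your case-by-case route works, but leaves a few loose ends you should tighten: you invoke $\tilde q_k>0$ at the endpoint $\tau=K'$ in the $\nu>0$, $n=0$ case, which reduces to $a_\nu^0>0$ (true, but not literally given by Lemma~\ref{4:l1}); and in the $\nu<0$ branch you should explicitly name the bounded limit solution of the transformed equation, namely $\cosh^{1/2}\tau\,Q_{n-1/2}^{\nu+1/2}(\cosh\tau)$, and check that the Dirichlet/Neumann condition at $\tau=K'$ survives the substitution (it does, since $\cn(K',k')=0$ kills the cross term). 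The paper's uniform substitution buys a shorter, case-free argument; yours buys a more elementary treatment of the generic case $\nu\ge0$ at the cost of the extra bookkeeping.
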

\begin{proof}
The function $w(\tau)=\Ec_\nu^n({\rm i}(K'-\tau),\kk)$, $0<\tau<2K'$, satisfies the differential equation
\begin{equation}\label{ode5}
 w''+\left(\nu(\nu+1)-a_\nu^n(\kk)-\nu(\nu+1)\ns^2(\tau,k')\right)w=0 .
\end{equation}
We would like to apply Lemma \ref{6:l1} to this differential equation but we cannot show that the coefficient of $w$ is negative if $-\frac12\le\nu<0$.
Instead of $w$ we consider the function
\[
v(\tau):=\sn^{-1/2}(\tau,k') w(\tau) .
\]
It satisfies the differential equation
\begin{equation}\label{odev}
 v''+ p(\tau,k)v'+q(\tau,k)v=0,
\end{equation}
where
\begin{gather*}
p(\tau,k):=\cn(\tau,k')\ds(\tau,k'),
\\
q(\tau,k):=\frac14k^2-a_\nu^n(\kk)-\frac34\dn^2(\tau,k')
-\bigg(\nu+\frac12\bigg)^2\cs^2(\tau,k') .
\end{gather*}
By Lemma \ref{4:l1},
$a_\nu^n(\kk)\ge -\tfrac14k^2$, so $q(\tau,k')\le-\frac14k^2<0$.
As $k\to0$,
\begin{gather*}
p(\tau,k)\to p(\tau):=(\cosh\tau\sinh\tau)^{-1},
 \\
q(\tau,k)\to q(\tau):=-n^2-\frac34\cosh^{-2}\tau -\bigg(\nu+\frac12\bigg)^2\sinh^{-2}\tau
\end{gather*}
locally uniformly for $\operatorname{Re}\tau>0$.
Up to a constant factor, the function
\[
u(\tau)=(\cosh\tau)^{1/2}\, Q_{n-\frac12}^{\nu+\frac12}(\cosh\tau)
\]
is the only solution of the differential equation
\[
u''+p(\tau)u'+q(\tau)u=0,
\]
which is bounded on the interval $[\tau_0, \infty)$.
Now we apply Lemma \ref{6:l1} with $a=\tau_0$ to the differential equation \eqref{odev} as $k\to 0$, and we obtain the statement of the theorem
with uniform convergence on $[\tau_0,b]$ for any $b>\tau_0$. The extension to locally uniform convergence for $\operatorname{Re}\tau>0$ follows from Lemma \ref{6:l2} applied to differential equation \eqref{ode5}. The proof for $\Es$ is very similar.
\end{proof}

Let $\F_\nu(s,h,\kk)$
be the solution of Lam\'e's equation belonging to the exponent $\nu+1$ at $z={\rm i}K'$, that is,
\begin{equation}\label{F}
 \F_\nu(s,h,\kk)=\tau^{\nu+1} \sum_{j=0}^\infty c_j \tau^j ,\qquad
 \tau=K'+{\rm i}s,\quad c_0=1.
\end{equation}
By analytic continuation, this function is well-defined in the strip $-K'<\operatorname{Im} s<K'$ using the principal branch of the power $(K'+{\rm i}s)^{\nu+1}$.
Note that $\F_\nu$ is real-valued for $s\in (-{\rm i}K',{\rm i}K')$.
Now we find the behavior of the function \eqref{F} with $h=a_\nu^n$ or $h=b_\nu^n$ as $k\to0$.

\begin{thm}\label{Flimit}
For $\nu\ge -\frac12$ and $n\in\N_0$, we have
\[
\tau^{-\nu-1}\F_\nu({\rm i}(K'-\tau),a_\nu^n(\kk),\kk)\to 2^{\nu+\frac12}\Gamma\bigg(\nu+\frac32\bigg) \tau^{-\nu-1}(\sinh\tau)^{1/2}P_{n-\frac12}^{-\nu-\frac12}(\cosh \tau)
\]
as $k\to 0$ locally uniformly for $|\operatorname{Im}\tau|<\pi$.
The same result holds with $b_\nu^n(\kk)$ in place of $a_\nu^n(\kk)$ for $n\in\N$.
\end{thm}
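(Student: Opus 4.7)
The plan is to apply Lemma \ref{6:l3} after a change of variable. Setting $s = i(K'-\tau)$ in the Lam\'e equation \eqref{4:Lame} we have $d^2/ds^2 = -d^2/d\tau^2$, and by the Jacobi imaginary transformation together with the reflection formulas $\sn(K'-\tau,k')=\cd(\tau,k')$ and $\cn(K'-\tau,k')=k\,\sd(\tau,k')$ (applied with modulus $k'$, whose complementary modulus is $k$), a short computation yields $k^2\sn^2(i(K'-\tau),k) = -\cs^2(\tau,k')$. Hence $w(\tau) := \F_\nu(i(K'-\tau), a_\nu^n(\kk), \kk)$ satisfies
\[
\tau^2 w'' = p(\tau,\kk) w, \qquad p(\tau,\kk) := a_\nu^n(\kk)\,\tau^2 + \nu(\nu+1)\,\tau^2 \cs^2(\tau,k').
\]
Since $\sn(\tau,k')\sim \tau$ at $0$, we have $p(0,\kk) = \nu(\nu+1)$, matching the indicial hypothesis of Lemma \ref{6:l3}. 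Moreover, by the definition \eqref{F}, $w(\tau) = \tau^{\nu+1}u(\tau,\kk)$ for a function $u(\cdot,\kk)$ analytic near $\tau=0$ with $u(0,\kk)=1$, exactly the normalization required in that lemma.

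Next I would verify the convergence hypotheses of Lemma \ref{6:l3} on the strip $D = \{\tau\in\C : |\mathrm{Im}\,\tau| < \pi\}$. Since $K(\kk) > \pi/2$ for every $\kk\in(0,1)$, the zeros of $\sn(\tau,k')$ (and hence the poles of $\cs(\tau,k')$) nearest the real axis lie at $\tau = \pm 2iK(\kk)$, which are strictly outside $D$; thus $p(\cdot,\kk)$ is analytic on $D$ for all such $\kk$, while the singularities of the limit coefficient sit at $\tau=\pm i\pi$ on $\partial D$. Using \eqref{k1} together with Theorem \ref{Elimit}, $\cs(\tau,k')\to 1/\sinh\tau$ locally uniformly on $D$ and $a_\nu^n(\kk)\to n^2$, so $p(\tau,\kk)\to p_\infty(\tau) := n^2\tau^2 + \nu(\nu+1)\tau^2/\sinh^2\tau$ locally uniformly on $D$. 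Lemma \ref{6:l3} then gives $u(\tau,\kk)\to u_\infty(\tau)$ locally uniformly on $D$, where $y_\infty(\tau) := \tau^{\nu+1}u_\infty(\tau)$ is the unique solution of the limit equation $y_\infty'' = (n^2 + \nu(\nu+1)/\sinh^2\tau)\,y_\infty$ of the form $\tau^{\nu+1}\times(\text{analytic, value }1\text{ at }0)$.

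It remains to identify $y_\infty$ explicitly. Substituting $z=\cosh\tau$ and $Y(\tau) = (\sinh\tau)^{1/2}P_{n-\frac12}^{-\nu-\frac12}(\cosh\tau)$ in the associated Legendre equation (of degree $n-\tfrac12$ and order $-\nu-\tfrac12$), and using $\cosh^2\tau = 1+\sinh^2\tau$, a direct calculation produces precisely the limit ODE above, so $Y$ solves it. From the hypergeometric representation \eqref{Pdefn}, as $\tau\to 0$,
\[
P_{n-\frac12}^{-\nu-\frac12}(\cosh\tau) \sim \frac{1}{\Gamma(\nu+\tfrac32)}\left(\frac{\cosh\tau-1}{\cosh\tau+1}\right)^{(2\nu+1)/4} \sim \frac{\tau^{\nu+\frac12}}{2^{\nu+\frac12}\Gamma(\nu+\tfrac32)},
\]
so $Y(\tau)\sim \tau^{\nu+1}/(2^{\nu+\frac12}\Gamma(\nu+\tfrac32))$. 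Consequently $y_\infty(\tau) = 2^{\nu+\frac12}\Gamma(\nu+\tfrac32)(\sinh\tau)^{1/2}P_{n-\frac12}^{-\nu-\frac12}(\cosh\tau)$ has the correct $\tau^{\nu+1}$-leading behavior, yielding the claimed limit. The argument with $b_\nu^n(\kk)$ replacing $a_\nu^n(\kk)$ is identical, as $b_\nu^n(\kk)\to n^2$ by Theorem \ref{Elimit}. The main technical obstacle is keeping the constants straight in this last step: verifying by direct substitution that $Y$ solves the limit ODE and extracting the exact prefactor $2^{\nu+1/2}\Gamma(\nu+3/2)$ from the near-$1$ asymptotics of $P_\nu^\mu$; the uniformity of the $\pm 2iK(\kk)$ bound for small $\kk$ is the only non-routine verification of Lemma \ref{6:l3}'s hypotheses.
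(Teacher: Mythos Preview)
Your approach is essentially identical to the paper's: both transform to the $\tau$ variable, obtain the equation $\tau^2 w''=p(\tau,k)w$ (the paper writes it with $\ns^2(\tau,k')$ rather than $\cs^2(\tau,k')$, but $\ns^2=1+\cs^2$ makes these the same), invoke Lemma~\ref{6:l3}, and identify the limit via the hypergeometric representation \eqref{Pdefn} of $P_{n-\frac12}^{-\nu-\frac12}$.

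One slip worth fixing: your claim that ``$p(\cdot,k)$ is analytic on $D$ for all such $k$'' is not quite right. The zeros of $\sn(\tau,k')$ \emph{on} the real axis at $\tau=\pm 2K(k')$ (not just the off-axis ones at $\pm 2iK(k)$) give poles of $\tau^2\cs^2(\tau,k')$ inside the strip $D$. What is true, and sufficient for Lemma~\ref{6:l3}, is that for every compact $C\subset D$ one has $p(\cdot,k)$ analytic on $C$ once $k$ is small enough, since $2K(k')\to\infty$ as $k\to 0$; the lemma can then be applied on any bounded simply-connected subdomain of $D$ containing $0$, yielding locally uniform convergence on $D$. The paper handles this implicitly by phrasing the convergence of $\tau\ns(\tau,k')\to\tau\coth\tau$ as \emph{locally uniform} on $|\Im\tau|<\pi$ (via \eqref{k1} and the maximum principle) rather than asserting global analyticity.
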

\begin{proof}
The function $w(\tau)=\F_\nu({\rm i}(K'-\tau),h,\kk)$ satisfies the differential equation
\[
\tau^2 w''=p(\tau,h,k)w,\qquad
p(\tau,h,k):=\tau^2\big(h-\nu(\nu+1)+\nu(\nu+1)\ns^2(\tau,k')\big).
\]
Now \eqref{k1} and the maximum principle for analytic functions give
\[
\tau\ns(\tau,k')\to \tau\coth\tau\qquad\text{as}\quad k\to 0\quad \text{locally uniformly for}\quad
|\operatorname{Im} \tau|<\pi.
\]
Using Theorem \ref{Elimit}, we see that
\begin{gather*}
p(\tau,a_\nu^n(\kk),k)\to p_\infty(\tau):=\tau^2\big(n^2-\nu(\nu+1)+\nu(\nu+1)\coth^2\tau\big)
\\ \phantom{p(\tau,a_\nu^n(\kk),k)\to p_\infty(\tau)\,}
{}= \tau^2\big(n^2+\nu(\nu+1)\sinh^{-2}\tau\big)
\end{gather*}
as $k\to 0$ uniformly for $|\operatorname{Im}\tau|<\pi$.
Note that the function
\[
w(\tau)=2^{\nu+\frac12}\Gamma\bigg(\nu+\frac32\bigg)(\sinh\tau)^{1/2} P_{n-\frac12}^{-\nu-\frac12}(\cosh \tau)
\]
solves the differential equation $\tau^2w''=p_\infty(\tau) w$ and it has the form
$w(\tau)=\tau^{\nu+1} v(\tau)$, where $v$ is analytic at $\tau=0$ with $v(0)=1$.
The latter equation follows from the representation of the associated
Legendre function of the first kind $P_\nu^\mu$
in terms of the hypergeometric function
\eqref{Pdefn}.
Therefore, the statement of the theorem follows from Lemma \ref{6:l3}.
The proof for $h=b_\nu^n$ is the same.
\end{proof}

\subsection[The expansion of the fundamental solution in flat-ring harmonics in the limit k to 0]
{The expansion of the fundamental solution in flat-ring harmonics \\ in the limit $\boldsymbol{k\to 0}$}

In Theorem \ref{5:main} we found the expansion of the fundamental solution in terms of internal and external flat-ring harmonics.
Let us write this expansion in the form
\begin{equation*}
\frac{1}{\|\r-\r^\ast\|}= \sum_{m\in\Z}^\infty\expe^{{\rm i}m(\phi-\phi^\ast)}\sum_{n=0}^\infty A_{m,n}(\tau,\tau^\ast,\psi,\psi^\ast,k),
\end{equation*}
where $s=2K-\psi$, $t=K'-\tau$, $\phi$ are flat-ring coordinates of $\r$,
$s^\ast=2K-\psi^\ast$, $t^\ast=K'-\tau^\ast$, $\phi^\ast$ are flat-ring coordinates of $\r^\ast$,
$t<t^\ast$, and
\begin{gather*}
A_{m,n}=\frac12 \bigg(\frac{\dn(\psi,k)-\cn(\psi,k)\dn(\tau,k')}{k'\sn(\tau,k')}\bigg)^{1/2}
\bigg(\frac{\dn(\psi^\ast,k)-\cn(\psi^\ast,k)\dn(\tau^\ast,k')}{k'\sn(\tau^\ast,k')}\bigg)^{1/2}
\\ \hphantom{A_{m,n}=}
{}\times \Ec_{|m|-\frac12}^n(2K-\psi,\kk)\Ec_{|m|-\frac12}^n(2K-\psi^\ast,\kk)
\\ \hphantom{A_{m,n}=}
{}\times \Ec_{|m|-\frac12}^n({\rm i}(K'-\tau),\kk)\Fc_{|m|-\frac12}^n({\rm i}(K'-\tau^\ast),\kk)+\cdots ,
\end{gather*}
where $\dots$ indicates a copy of the previous terms with $\Ec$, $\Fc$ replaced by $\Es$, $\Fs$ if $n\ge 1$ and represents $0$ if $n=0$.

The expansion \eqref{1:expansion} of the fundamental solution in toroidal harmonics
can be written in the form
\begin{equation*}
\frac{1}{\|\r-\r^\ast\|}=\sum_{m\in\Z}^\infty\expe^{{\rm i}m(\phi-\phi^\ast)}\sum_{n=0}^\infty B_{m,n}(\tau,\tau^\ast,\psi,\psi^\ast),
\end{equation*}
where
$\tau$, $\psi$, $\phi$ are toroidal coordinates of $\r$, $\tau^\ast$, $\psi^\ast$, $\phi^\ast$
are toroidal coordinates of $\r^\ast$ with $\tau>\tau^\ast$, and
\begin{gather*}
B_{m,n}=\frac1{\pi}(\cosh\tau-\cos\psi)^{1/2}(\cosh\tau^\ast-\cos\psi^\ast)^{1/2}
\epsilon_n \cos (n(\psi-\psi^\ast))
\\ \hphantom{B_{m,n}=}
{}\times(-1)^m\frac{\Gamma\big(n-m+\frac12\big)}{\Gamma\big(n+m+\frac12\big)}Q_{n-\frac12}^m(\cosh \tau)P_{n-\frac12}^m(\cosh \tau^\ast),
\end{gather*}
where
$\epsilon_n:=2-\delta_{n,0}$.
We now prove the main result of this section.
\begin{thm}
Let $m\in\Z$, $n\in \N_0$, $\tau,\tau^\ast>0$, $\psi,\psi^\ast\in\R$. Then we have
\begin{equation}\label{ABlimit}
 A_{m,n}(\tau,\tau^\ast,\psi,\psi^\ast,k)\to B_{m,n}(\tau,\tau^\ast,\psi,\psi^\ast)\qquad
 \text{as}\quad k\to0.
\end{equation}
\end{thm}
\begin{proof}
It is known \cite[formulas~(14.9.13) and~(14.9.14)]{NIST:DLMF} that
\begin{gather}
P_\nu^{m}(x)=\frac{\Gamma(\nu+m+1)}{\Gamma(\nu-m+1)} P_\nu^{-m}(x),\qquad
Q_\nu^m(x)= \frac{\Gamma(\nu+m+1)}{\Gamma(\nu-m+1)} Q_\nu^{-m}(x)\label{PQidentity}.
\end{gather}
Therefore, it is enough to consider $m\in\N_0$.
We have as $k\to 0$,
\begin{equation}\label{ABlimit1}
 \frac{\dn(\psi,k)-\cn(\psi,k)\dn(\tau,k')}{k'\sn(\tau,k')}\to
\frac{\cosh \tau-\cos\psi}{\sinh\tau}.
\end{equation}
It follows from Theorem \ref{Elimit} that
as $k\to0$,
\begin{gather}\label{ABlimit2}
\Ec_{m-\frac12}^n(2K-\psi,\kk)\Ec_{m-\frac12}^n(2K-\psi^\ast,\kk)
\to \frac2\pi \epsilon_n\cos \bigg(\!n\bigg(\frac\pi2-\psi\bigg)\!\bigg)
\cos \bigg(\!n\bigg(\frac\pi2-\psi^\ast\bigg)\!\bigg).
\end{gather}
If $n\in\N$, we have a similar result with $\Ec$ replaced by $\Es$ and $\cos$ replaced by $\sin$.
Set
\begin{gather*}
f(\tau,k):=\Ec^n_{m-\frac12}({\rm i}(K'-\tau),\kk),\qquad
g(\tau,k):=\Fc_{m-\frac12}^n({\rm i}(K'-\tau),\kk),
\\
f(\tau):=(\sinh \tau)^{1/2}Q_{n-\frac12}^m(\cosh \tau), \qquad
g(\tau):=(\sinh \tau)^{1/2}P^{-m}_{n-\frac12}(\cosh\tau).
\end{gather*}
Then we obtain from Theorems \ref{Elimit2} and~\ref{Flimit}, that as $k\to0$,
\begin{equation}\label{ABlimit3}
 \frac{f(\tau,k)g(\tau^\ast,k)}{W[g(\cdot,k),f(\cdot,k)]}
\to \frac{f(\tau)g(\tau^\ast)}{W[g,f]},
\end{equation}
where $W[g,f]$ denotes the Wronskian of $g$, $f$ (which is a constant).
Now, by definition, $W[g(\cdot,k),f(\cdot,k)]=1$ and, using \eqref{PQidentity}
and \cite[formula~(14.2.10)]{NIST:DLMF}
\[
W[P^\mu_\nu(x),Q_\nu^\mu(x)]=\expe^{{\rm i}\mu\pi} \frac{\Gamma(\nu+\mu+1)}{\Gamma(\nu-\mu+1)}\frac1{1-x^2} .
\]
Therefore, \eqref{ABlimit3} yields
as $k\to 0$,
\begin{gather}
\Ec^n_{m-\frac12}({\rm i}(K'-\tau),\kk)\Fc_{m-\frac12}^n({\rm i}(K'-\tau^\ast),\kk)
 \nonumber
 \\ \qquad
 {}\to(-1)^m \frac{\Gamma\big(n+m+\frac12\big)}{\Gamma\big(n-m+\frac12\big)}
\big(\sinh \tau\sinh \tau^\ast\big)^{1/2}Q_{n-\frac12}^m(\cosh \tau)
P^{m}_{n-\frac12}(\cosh \tau^\ast).
\label{ABlimit4}
\end{gather}
We have the same result when $\Ec$, $\Fc$ are replaced by $\Es$, $\Fs$, respectively.
If we combine \eqref{ABlimit1}, \eqref{ABlimit2} and~\eqref{ABlimit4},
and noting that
\[
\cos \bigg(\!n\bigg(\frac\pi2\!-\psi\bigg)\!\bigg)\cos \bigg(\!n\bigg(\frac\pi2\!-\psi^\ast\bigg)\!\bigg)\!+\sin \bigg(\!n\bigg(\frac\pi2\!-\psi\bigg)\!\bigg)\sin \bigg(\!n\bigg(\frac\pi2\!-\psi^\ast\bigg)\!\bigg) \!=\cos(n(\psi\!-\psi^\ast)),
\]
we obtain \eqref{ABlimit}.
\end{proof}

\subsection*{Acknowledgements}

We greatly appreciate the comments of the referees which led to improvements of the paper.

\pdfbookmark[1]{References}{ref}
\LastPageEnding

\end{document}